\newtheorem{theorem}{Theorem}[section]
\newtheorem{lemma}[theorem]{Lemma}
\newtheorem{proposition}[theorem]{Proposition}
\newtheorem{corollary}[theorem]{Corollary}
\newtheorem{conjecture}[theorem]{Conjecture}
\theoremstyle{definition}
\newtheorem{definition}[theorem]{Definition}
\newtheorem{remark}[theorem]{Remark}
\newtheorem*{acknowledgments}{Acknowledgments}
\def\deg{\mathop{\mathrm{deg}}\nolimits}
\def\ad{\mathop{\mathrm{ad}}\nolimits}
\def\tr{\mathop{\mathrm{tr}}\nolimits}
\def\End{\mathop{\mathrm{End}}\nolimits}
\def\id{\mathop{\mathrm{id}}\nolimits}
\def\Span{\mathop{\mathrm{Span}}\nolimits}
\def\Span{\mathop{\mathrm{Span}}\nolimits}
\begin{document}
\title{On the universal $sl_2$ invariant of ribbon bottom tangles}
\author{Sakie Suzuki\thanks{Research Institute for Mathematical Sciences, Kyoto
University, Kyoto, 606-8502, Japan. E-mail address: \texttt{sakie@kurims.kyoto-u.ac.jp}} }
\date{May 12, 2009}

\maketitle

\begin{center}
\textbf{Abstract}
\end{center}
A \textit{bottom tangle} is a tangle in a cube consisting of  arc components whose boundary points are on a line in the bottom square of the cube.
A \textit{ribbon bottom tangle} is a bottom tangle whose \textit{closure} is a ribbon link. For every $n$-component ribbon bottom tangle $T$, we prove that
the universal invariant $J_T$ of $T$ associated to the quantized enveloping algebra $U_h(sl_2)$ of the Lie algebra $sl_2$ is contained in a certain  $\mathbb{Z}[q,q^{-1}]$-subalgebra of the $n$-fold completed tensor power $U_h^{\hat  {\otimes }n}(sl_2)$ of $U_h(sl_2)$.
This result is  applied  to the colored Jones polynomial of ribbon links.

\section{Introduction.}

For  each ribbon Hopf algebra $H$, Reshetikhin and Turaev \cite{Re} defined  invariants
of  framed links colored by finite dimensional representations.
A  \textit{universal invariant} \cite{R1,R2,O} associated to $H$  is an invariant of framed tangles and links defined without using representations.
The universal invariant has a universality property such that
the colored link invariants constructed by  Reshetikhin and Turaev are obtained from the universal invariants by taking trace in the representations attached to the components of links.

A quantized enveloping algebra $U_h:=U_h(sl_2)$ of the  Lie algebra $sl_2$ is  a complete ribbon Hopf $\mathbb{Q}[[h]]$-algebra.
By the \textit{universal $sl_2$ invariant}, we mean the universal invariant associated to $U_h$. 
In \cite{H1}, Habiro  studied the  universal invariant of  \textit{bottom tangles} (see Section \ref{bot})  associated to an arbitrary ribbon Hopf algebra, and  in \cite{H2}, he  studied the universal $sl_2$ invariant of bottom tangles (see Section \ref{bottom}).
The universal $sl_2$ invariant  of an $n$-component bottom tangle  takes values in  the $n$-fold completed tensor power $U_h^{\hat  {\otimes }n}$ of $U_h$.
For every oriented, ordered, framed link $L$,  there is  a bottom tangle whose \textit{closure} is isotopic to $L$.
The universal invariant of bottom tangles has a universality property such that 
the colored link invariants  of a link $L$ is obtained from the  universal  invariant of a bottom tangle $T$ whose closure is isotopic to  $L$,  by taking the  quantum trace in the representations attached to the components of links.
In particular, one can  obtain the colored Jones polynomials of links from the  universal  $sl_2$ invariant of  bottom tangles.

An $n$-component link $L$ is called  a \textit{ribbon link}  if it bounds a system of $n$ ribbon disks in $S^3$.
Mizuma  \cite{Mi}  derived an explicit formula for the first derivative at $-1$ for the Jones polynomial of  $1$-fusion ribbon knots,
and    in \cite{Mi2},  she estimated the ribbon number of those knots by using the formula.
Eisermann \cite{Ei} proved that the Jones polynomial $V(L)\in \mathbb{Z}[v,v^{-1}]$ of an $n$-component
ribbon link $L$ is divisible by the Jones polynomial $V(O^n)=(v+v^{-1})^n$ of the  $n$-component unlink $O^n$.

A \textit{ribbon bottom tangle} is defined as a bottom tangle whose closure is a ribbon link.
In this paper, we study the universal $sl_2$ invariant  of   ribbon bottom tangles.

\subsection{Main result.}
Set $v=\exp\frac{h}{2}$, $q=v^2.$ We have $\mathbb{Z}[q,q^{-1}]\subset \mathbb{Z}[v,v^{-1}]\subset \mathbb{Q}[[h]]$.
Let $J_T$ denote the universal $sl_2$ invariant of a bottom tangle $T$.

Habiro \cite{H2}  proved that the universal $sl_2$ invariant $J_T$ of  an $n$-component,
 algebraically-split, $0$-framed bottom tangle $T$  is contained in a certain $\mathbb{Z}[q,q^{-1}]$-subalgebra 
$(\tilde {\mathcal{U}}_q^{ev})^{\tilde \otimes n}$ of $U_h^{\hat \otimes n}$.
He also defined another  $\mathbb{Z}[q,q^{-1}]$-subalgebra  $ (\bar U_q^{ev})\;\tilde {}^{\;\tilde \otimes n}\subset (\tilde {\mathcal{U}}_q^{ev})^{\tilde \otimes n}$ 
 and stated the following conjecture for boundary bottom tangle. (A bottom tangle is said to be \textit{boundary} if it bounds mutually disjoint Seifert surfaces in $[0,1]^3$, see  \cite{H1} for the detail.)
\begin{conjecture}[Habiro \cite{H2}]\label{Habico}
Let $T$ be an $n$-component boundary bottom tangle with $0$-framing.
Then we have $J_T\in  (\bar U_q^{ev})\;\tilde {}^{\;\tilde \otimes n}$.
\end{conjecture}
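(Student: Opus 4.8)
The plan is to anchor the argument at the case $n=1$ and then propagate to all $n$ using the disjoint Seifert surfaces as the organizing structure. Since every $1$-component bottom tangle is boundary, the case $n=1$ of the conjecture is exactly the statement that the universal invariant of a $0$-framed knot bottom tangle lies in $\bar U_q^{ev}$, which is Habiro's cyclotomic expansion of the colored Jones polynomial of knots \cite{H2}; I would take this as the base case. Because a boundary bottom tangle is in particular algebraically-split and $0$-framed, I already know from Habiro's earlier result that $J_T\in (\tilde {\mathcal{U}}_q^{ev})^{\tilde \otimes n}$, so the entire task is to upgrade this membership to the smaller algebra $(\bar U_q^{ev})\;\tilde {}^{\;\tilde \otimes n}$. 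The first concrete step is to fix disjoint Seifert surfaces $F_1,\dots,F_n\subset [0,1]^3$ with $\partial F_i=T_i$ and put them in standard handle position: each $F_i$ becomes a disk $D_i$ carrying $2g_i$ bands realizing a symplectic basis, the disks stacked disjointly along the bottom line, so that $T$ is displayed as an explicit composite of Habiro's elementary bottom tangles in which every crossing is recorded by a band.

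The second step is a factorization lemma separating the ``diagonal'' from the ``off-diagonal'' contributions. In this presentation $J_T$ is a state sum over the $R$-matrix factors produced by the band crossings, each factor distributed into the tensor legs of the strands it decorates. I would prove that, modulo the off-diagonal cross-terms coming from crossings between bands of distinct surfaces, $J_T$ equals the tensor product $J_{T_1}\otimes\cdots\otimes J_{T_n}$ of the invariants of the knot bottom tangles $\partial F_i$; by the base case each factor lies in $\bar U_q^{ev}$, so this diagonal part already lies in $(\bar U_q^{ev})\;\tilde {}^{\;\tilde \otimes n}$. The problem is thereby reduced to the cross-terms arising from crossings between a band of $F_i$ and a band of $F_j$ with $i\ne j$.

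Controlling these cross-terms is the heart of the matter and the step I expect to be the main obstacle. Each such crossing produces an $R$-matrix factor spread across the $i$-th and $j$-th legs, and individually these factors need not respect $\bar U_q^{ev}$. The disjointness of the surfaces is precisely the statement that each $T_i$ is null-homologous in the complement of $\bigcup_{j\ne i}F_j$, equivalently that all Milnor invariants of the closure vanish; the difficulty is to convert this homological input into the sharp algebraic membership. My plan is a clasper-surgery induction: I would present $T$ as surgery on the trivial $n$-component bottom tangle along a system of tree claspers whose leaves bound disjoint subsurfaces of the $F_i$, and then show, using Habiro's formula for the effect of clasper surgery on the universal invariant, that each surgery alters $J_T$ only within $(\bar U_q^{ev})\;\tilde {}^{\;\tilde \otimes n}$. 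The essential sub-problem is to identify exactly which clasper moves preserve this membership and to check that the even-part normalization forces every boundary-type leaf contribution into $\bar U_q^{ev}$.

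Finally I would settle integrality, tracking that all coefficients arising from the $R$-matrix expansions and the clasper-surgery formula lie in $\mathbb{Z}[q,q^{-1}]$ after the even normalization, and that the answer respects the completion built into the definition of $(\bar U_q^{ev})\;\tilde {}^{\;\tilde \otimes n}$; here I would reuse Habiro's integrality estimates. The crux throughout is the contrast with the ribbon case treated in this paper: there a sharp band presentation lets one argue crossing-by-crossing, whereas the boundary condition is only homological, so the off-diagonal reduction cannot be localized and must be run through honest clasper calculus. Pinning down the clasper moves that preserve $(\bar U_q^{ev})\;\tilde {}^{\;\tilde \otimes n}$-membership is where the real work, and the main risk of the approach, lies.
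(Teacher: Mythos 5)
You set out to prove Conjecture \ref{Habico}, but this paper contains no proof of it: it is stated as an open conjecture of Habiro, and the paper's actual result (Theorem \ref{1}) treats the ribbon case, with a different completion $(\bar U_q^{ev})\;\hat{}^{\;\hat\otimes n}$, by exploiting the ribbon-specific structure of Proposition \ref{r1}. So there is no paper proof to match, and judged on its own your text is a program with genuine gaps, two of which are fatal as stated. First, the ``factorization lemma'' --- that $J_T$ equals $J_{T_1}\otimes\cdots\otimes J_{T_n}$ ``modulo off-diagonal cross-terms'' --- is not a well-formed statement: the $R$-matrix factors produced by crossings between bands of $F_i$ and $F_j$ are interleaved \emph{multiplicatively} inside the $i$th and $j$th tensor legs of $J_T$, so $J_T$ is not a diagonal term plus a correction lying in any ideal or filtration you have introduced, and membership of the diagonal part in $(\bar U_q^{ev})\;\tilde{}^{\;\tilde\otimes n}$ yields nothing about $J_T$ itself. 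Indeed the paper's own proof mechanism for the ribbon case never factors off a diagonal part; it keeps all crossings and controls them simultaneously via Proposition \ref{Habi} and the filtrations $\{A_p\}$, $\{C_p\}$.

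Second, your reduction of the boundary hypothesis to homology is incorrect, and the clasper step built on it is circular. Admitting mutually disjoint Seifert surfaces is a geometric condition strictly stronger than the vanishing of all Milnor invariants (vanishing Milnor invariants does not imply boundary), so the ``homological input'' you propose to convert is not equivalent to the hypothesis you are given. More importantly, there is no established presentation of boundary bottom tangles as tree-clasper surgeries on the trivial tangle whose individual moves preserve $(\bar U_q^{ev})\;\tilde{}^{\;\tilde\otimes n}$-membership; proving that such moves preserve membership is not a checkable ``sub-problem'' but is, in substance, the entire conjecture restated. The route that actually parallels this paper is different: Habiro \cite{H1} characterizes boundary bottom tangles by a commutator-type operation applied to even-trivial-like data, in exact analogy with the $\ad_b$-characterization of ribbon tangles in Proposition \ref{r1}; the missing ingredient is then an integrality and completion statement for the algebraic commutator map on the integral forms $U_{\mathbb{Z},q}$, $\bar U_q^{ev}$, playing the role that Proposition \ref{Habi} and the estimates of Section \ref{proof} play here. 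That is where the open difficulty genuinely sits; your base case ($n=1$, which is Habiro's cyclotomic expansion for $0$-framed knots \cite{H2}) is fine but carries none of this weight.
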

We shall define another subalgebra  $(\bar U_q^{ev})\;\hat  {}^{\;\hat  \otimes n}\subset (\bar U_q^{ev})\;\tilde {}^{\;\tilde \otimes n}$. 
(Here, we do not know whether the inclusion is proper or not, but the definition of $(\bar U_q^{ev})\;\hat  {}^{\;\hat  \otimes n}$ is more natural than that of $(\bar U_q^{ev})\;\tilde {}^{\;\tilde \otimes n}$ in our setting.)
The main result of the present  paper is the following, which we prove in Section \ref{proof}.
\begin{theorem} \label{1}
Let $T$ be an $n$-component ribbon bottom tangle with $0$-framing.
Then we have $J_T\in  (\bar U_q^{ev})\;\hat  {}^{\;\hat  \otimes n}$.
\end{theorem}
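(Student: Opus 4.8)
The plan is to exploit the defining topological property of ribbon links—that a ribbon link is obtained from an unlink by fusion—and to translate this into an algebraic statement about the universal invariant. A ribbon bottom tangle $T$ with $0$-framing can be presented as the result of \emph{band-summing} (fusing) an $n'$-component trivial bottom tangle $U$ (whose closure is an unlink, with $n' \ge n$) along a collection of bands. Equivalently, $T$ is obtained from $U$ by a sequence of clasp moves, or $T$ is the image of a trivial tangle under a well-chosen tangle in a suitable category. The first step is therefore to fix a normal form: I would write $T = (\text{bottom tangle with trivial closure}) \cdot (\text{framework of bands})$ so that $J_T$ factors as the image of $J_U$ under the operations (multiplication, comultiplication, and the action of the ribbon/twist element) encoding how the bands attach. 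Since each component of $U$ is an unknot bounding a disk, the local contribution of $J_U$ at each such component lies in the simplest possible piece of the algebra, and the band-fusion operations are built from the algebra structure maps of $U_h$.

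The second step is to control how the subalgebras behave under these structure maps. I would invoke Habiro's framework directly: the subalgebra $(\bar U_q^{ev})\;\tilde{}^{\;\tilde\otimes n}$ and its refinement $(\bar U_q^{ev})\;\hat{}^{\;\hat\otimes n}$ are defined so as to be preserved under the comultiplication, multiplication, and the action of the universal $R$-matrix and ribbon element—these are exactly the moves used to realize a fusion. So I would prove a closure lemma stating that if the local invariant of each component before fusion lies in $\bar U_q^{ev}$ (or its completed tensor powers), then after applying the band-fusion maps the total invariant remains in $(\bar U_q^{ev})\;\hat{}^{\;\hat\otimes n}$. The base case is that the universal invariant of a $0$-framed unknot component lies in $\bar U_q^{ev}$, which should be a direct computation with the $R$-matrix contributions cancelling to give something in the integral even part.

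The third step is to handle the passage from the tilde-completion to the hat-completion—this is where the genuinely new content of the theorem over the conjectural/boundary case sits. Because $(\bar U_q^{ev})\;\hat{}^{\;\hat\otimes n} \subset (\bar U_q^{ev})\;\tilde{}^{\;\tilde\otimes n}$ and we do not know the inclusion is proper, I expect the ribbon hypothesis to provide extra divisibility/integrality beyond what the boundary hypothesis gives, allowing the stronger hat-membership. Concretely, I would track the specific generators of $(\bar U_q^{ev})\;\hat{}^{\;\hat\otimes n}$ and show that the fusion presentation of a ribbon tangle produces invariants expressed only in terms of those generators, using that the fusing bands are framed $0$ and that the clasps come in cancelling pairs characteristic of ribbon (as opposed to merely boundary) links.

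The main obstacle will be the last step: verifying that the ribbon structure forces membership in the smaller hat-subalgebra rather than merely the tilde-subalgebra. The topological input (fusion from an unlink) naturally lands one in the tilde-world via Habiro's preservation results, so the crux is to extract the additional algebraic constraint that distinguishes ribbon links—I expect this to require a careful bookkeeping of the $R$-matrix contributions along each band, showing that the pairing of clasps yields precisely the integrality condition defining $\hat{}^{\;\hat\otimes n}$. Making the normal form of a ribbon bottom tangle explicit enough to drive this calculation, while keeping the algebra manageable, is the part I anticipate being delicate.
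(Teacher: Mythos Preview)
Your topological starting point---presenting a ribbon tangle via fusion from a trivial tangle---is essentially the same as the paper's: Proposition~\ref{r1} writes $T = \mu_b^{[N_1,\ldots,N_n]}\ad_b^{\otimes k}(W)$ for an even-trivial $W\in BT_{2k}$, so that algebraically $J_T = \mu^{[N_1,\ldots,N_n]}\ad^{\otimes k}(J_W)$. But the algebraic mechanism you propose has a genuine gap.

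The claim that $(\bar U_q^{ev})\;\hat{}^{\;\hat\otimes n}$ is ``preserved under the comultiplication, multiplication, and the action of the universal $R$-matrix'' is not what drives the proof and is not correct as stated: the $R$-matrix contributions $R_n^\pm$ involve $\tilde F^{(n)}$, which lies in $U_{\mathbb{Z},q}^{ev}$ but \emph{not} in $\bar U_q^{ev}$. Consequently your base case fails too: $J_W$ does not land in $(\bar U_q^{ev})^{\otimes 2k}$ but only in $\tilde D^{\mathrm{Lk}(W)}(U_{\mathbb{Z},q}^{ev}\otimes \bar U_q^{ev})^{\otimes k}$ (Proposition~\ref{W}), with the odd-indexed tensorands in the larger algebra. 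The crucial ingredient you are missing is that the fusion operation is the \emph{adjoint action}, and the key algebraic lemma is Proposition~\ref{Habi}: $U_{\mathbb{Z},q}\triangleright \bar U_q^{ev}\subset \bar U_q^{ev}$. It is this asymmetric statement---the big algebra acting adjointly on the small one stays in the small one---that collapses the mixed tensor $(U_{\mathbb{Z},q}^{ev}\otimes \bar U_q^{ev})^{\otimes k}$ into $(\bar U_q^{ev})^{\otimes k}$ after $\ad^{\otimes k}$. Without identifying this lemma your ``closure lemma'' cannot be proved.

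Your third step also misreads the situation. The hat-completion $(\bar U_q^{ev})\;\hat{}^{\;\hat\otimes n}$ is defined in the paper via a filtration $C_p$ built from the adjoint action of $U_{\mathbb{Z},q}\tilde E^{(p)}$ on $\bar U_q^{ev}$; membership in it is not an ``extra divisibility'' beyond a boundary-link result (no such result exists---Conjecture~\ref{Habico} is open), but follows directly from tracking the state-sum: each state $s$ contributes to $C_{|s|}^{(k)}$ because some crossing with $s(c)=|s|$ lies on an odd strand, forcing the corresponding adjoint factor into $C_{|s|}$. The hat/tilde comparison is a separate afterthought (the Remark following the proof), not the heart of the argument.
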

 An $n$-component bottom tangle $T$ is called  a \textit{slice bottom tangle} if $T$ is  \textit{concordant} to the $n$-component trivial bottom tangle,
where the  trivial bottom tangle is the  bottom tangle taking the shape as $\cap\ldots \cap $ (see Section \ref{bot} for the definition of the   concordance of  bottom tangles).
The following is a  generalization of Conjecture \ref{Habico} and Theorem \ref{1}.
\begin{conjecture}
If an $n$-component bottom tangle $T$ is   concordant to a boundary bottom tangle (in particular, if $T$ is a slice bottom tangle),
then we have $J_T\in  (\bar U_q^{ev})\;\hat  {}^{\;\hat  \otimes n}$.
\end{conjecture}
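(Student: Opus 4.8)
The plan is to prove the statement by induction along a Morse decomposition of the concordance, reducing it to a single \emph{elementary move} lemma together with a base case supplied by the hypothesis. Concretely, let $C\subset [0,1]^3\times[0,1]$ be a concordance from $T$ (at level $0$) to a boundary bottom tangle $T'$ (at level $1$). First I would put the projection of $C$ to the last coordinate in Morse position, so that $C$ is obtained from $T'$ by a finite sequence of elementary moves: births of split $0$-framed trivial circles (index-$0$ critical points), band/saddle moves (index-$1$), and deaths of split trivial circles (index-$2$). By the usual rearrangement of critical points by index one may assume all births precede all saddles, which precede all deaths; this presents $T$ as the result of applying a prescribed sequence of appearance, band, and disappearance moves to $T'$, all staying inside the bottom-tangle category of \cite{H1}.

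Next I would translate each elementary move into an operation on the universal invariant using Habiro's functoriality. A birth inserts the universal invariant of a split $0$-framed unknot, a death is a counit/quantum-trace-type contraction, and a saddle is implemented by a (co)multiplication on the affected tensor factors combined with the $R$-matrix and the ribbon element recording the band's framing. The heart of the argument is then the following \emph{move lemma}: each of these three operations sends $(\bar U_q^{ev})\;\hat {}^{\;\hat \otimes m}$ into $(\bar U_q^{ev})\;\hat {}^{\;\hat \otimes m'}$ for the appropriate $m,m'$. To establish it I would check that $\bar U_q^{ev}$ is closed under multiplication, is compatible with $\Delta$ and $S$ in the sense the band move requires, and that the split-unknot element lies in $\bar U_q^{ev}$, and finally that the completion defining the $\hat{}$-decoration is stable under the contraction maps. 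Granting the move lemma and the base case $J_{T'}\in (\bar U_q^{ev})\;\hat {}^{\;\hat \otimes n}$, induction on the number of saddles yields $J_T\in (\bar U_q^{ev})\;\hat {}^{\;\hat \otimes n}$.

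Two points require genuinely new work, and I expect the second to be the main obstacle. The first is that the universal $sl_2$ invariant is \emph{not} a concordance invariant, so the move lemma cannot merely assert that $J$ is unchanged; it must track how the \emph{value} of $J$ moves while remaining inside the subalgebra, the delicate case being the saddle, which merges or splits components and interacts with the completion. The second, and harder, is the base case for a boundary bottom tangle $T'$: one must upgrade Habiro's Conjecture \ref{Habico} from $(\bar U_q^{ev})\;\tilde {}^{\;\tilde \otimes n}$ to the finer $(\bar U_q^{ev})\;\hat {}^{\;\hat \otimes n}$. Here I would exploit the mutually disjoint Seifert surfaces of $T'$ to build a presentation in which each component is assembled from its own surface, producing a tensor-factorwise structure that I expect to land in the $\hat{}$-subalgebra; this is morally the boundary analogue of the proof of Theorem \ref{1}.

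Finally I would stress why the problem cannot simply be reduced to Theorem \ref{1}. Being concordant to a boundary bottom tangle is a $4$-dimensional, slice-type condition, whereas Theorem \ref{1} assumes the strictly stronger ribbon condition; identifying the two would amount to the slice--ribbon conjecture, so an independent treatment of the slice/boundary base case is unavoidable. In the special slice case, where $T'$ is the trivial bottom tangle, the base case is immediate and the entire difficulty is concentrated in the saddle step of the move lemma, which I therefore regard as the crux of the argument.
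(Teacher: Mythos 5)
This statement is one the paper deliberately leaves \emph{unproved}: it appears as a conjecture generalizing Conjecture \ref{Habico} and Theorem \ref{1}, so there is no proof in the paper to compare against, and your proposal must stand on its own. It does not: it is a program with two holes that you yourself flag as ``genuinely new work,'' and both are essential. The first is the base case. For a general boundary bottom tangle $T'$ you need $J_{T'}\in (\bar U_q^{ev})\;\hat {}^{\;\hat \otimes n}$, which is precisely Habiro's open Conjecture \ref{Habico} --- in fact a strengthening of it, from the completion $(\bar U_q^{ev})\;\tilde {}^{\;\tilde \otimes n}$ to the finer $(\bar U_q^{ev})\;\hat {}^{\;\hat \otimes n}$. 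Your suggestion to imitate the proof of Theorem \ref{1} using the disjoint Seifert surfaces cannot work as stated: that proof rests entirely on the ribbon data presentation $T=\mu_b^{[N_1,\ldots,N_n]}\ad_b^{\otimes k}(W)$ with $W$ \emph{even-trivial} (Propositions \ref{r1} and \ref{rib}), which is an encoding of the ribbon condition via Proposition \ref{rs}; a boundary tangle admits no such presentation, and producing a usable analogue is exactly the unresolved content of Conjecture \ref{Habico}.

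The second hole, the ``move lemma,'' is worse than delicate --- as formulated it is not even well-posed. The universal invariant of the result of a saddle move is not a function of the universal invariant of the input: a band is attached at a specific site on the tangle, and two tangles with equal $J$ can have different $J$ after a band move. So an assertion of the form ``the saddle operation sends $(\bar U_q^{ev})\;\hat {}^{\;\hat \otimes m}$ into $(\bar U_q^{ev})\;\hat {}^{\;\hat \otimes m'}$'' cannot be stated at the level of invariant values; the induction would have to carry an entire presentation (some normal form of the intermediate tangles) through the movie, which is where all the difficulty actually lives. Moreover, the algebraic check you propose fails concretely: where a saddle can be localized it involves comultiplication, but $\Delta$ does \emph{not} preserve $\bar U_q^{ev}$, since by the formulas of Section \ref{uqb2} one has $\Delta(e^n)=\sum_{j=0}^n \begin{bmatrix}n \\ j\end{bmatrix}_q e^{n-j}K^j\otimes e^j$, which contains odd powers of $K$, and $K\notin \bar U_q^{ev}$ (only $K^2$ is a generator). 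Controlling precisely such stray odd powers of $K$ is the technical heart of the paper's proof of Theorem \ref{1} --- the modified decorations, Lemma \ref{dia}, Lemma \ref{DD}, Proposition \ref{Habi}, and the stability of the filtration $\{C_p\}$ under $\ad$ and $\mu$ --- and all of it exploits the even-trivial structure of $W$, for which an arbitrary concordance movie offers no substitute. As you concede, even in the slice case (trivial base) the saddle step remains the crux; your proposal correctly locates the obstacles but does not overcome either of them, and the statement remains a conjecture.
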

\subsection{An application to the colored Jones polynomial.} 
Here, we give an application of Theorem \ref{1}.
We use the following $q$-integer  notations.
\begin{align*}
&\{i\}_q = q^i-1,\quad  \{i\}_{q,n} = \{i\}_q\{i-1\}_q\cdots \{i-n+1\}_q,\quad  \{n\}_q! = \{n\}_{q,n},
\\
&[i]_q = \{i\}_q/\{1\}_q,\quad  [n]_q! = [n]_q[n-1]_q\cdots [1]_q, \quad \begin{bmatrix} i \\ n \end{bmatrix} _q  = \{i\}_{q,n}/\{n\}_q!,
\end{align*}
for $i\in \mathbb{Z}, n\geq 0$.

For $l\geq 1$, let  $V_l$ denote the $l$-dimensional irreducible representation of $U_h$.
Let $\mathcal{R}$  denote the representation ring  of $U_h$ over  $\mathbb{Q}(v)$, i.e.,
$\mathcal{R}$ is the $\mathbb{Q}(v)$-algebra 
$$
\mathcal{R}= \Span _{\mathbb{Q}(v)}\{V_l \  | \ l\geq 1\}
$$ 
with the multiplication induced by the tensor product.

Habiro \cite{H2}  studied  the following  polynomials in $V_2$
\begin{align*}
\tilde P'_l&=\frac{v^{l}}{\{l\}_q!}\prod _{i=0}^{l-1}(V_2-v^{2i+1}-v^{-2i-1})\in  \mathcal{R},
\end{align*}
for $l\geq 0$, and  proved the following theorem.
\begin{theorem}[Habiro \cite{H2}]
Let $L$ be an $n$-component, algebraically-split, $0$-framed  link.
We have
\begin{align*}
J_{L; \tilde P'_{l_1},\ldots , \tilde P'_{l_n}}\in \frac{\{ 2l_j+1\}_{q, l_j+1}}{\{1\} _q}\mathbb{Z}[q,q^{-1}],
\end{align*} 
for $l_1,\ldots , l_n\geq 0$, where $j$ is a number such that   $l_j=\max\{l_i\}_{1\leq i\leq n}$.
\end{theorem}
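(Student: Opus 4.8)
The plan is to derive this statement from the universal $sl_2$ invariant together with Habiro's integrality result for algebraically-split $0$-framed bottom tangles. First I would fix a $0$-framed, algebraically-split bottom tangle $T$ whose closure is $L$ (such a $T$ exists, and the algebraically-split, $0$-framed conditions descend to it since linking numbers and framings are preserved under closure), so that the colored invariant is recovered as a quantum trace
$$
J_{L;\tilde P'_{l_1},\ldots,\tilde P'_{l_n}}=\big(\tr_q^{\tilde P'_{l_1}}\otimes\cdots\otimes\tr_q^{\tilde P'_{l_n}}\big)(J_T),
$$
where $\tr_q^{\tilde P'_l}$ is the $\mathbb{Q}(v)$-linear extension over $\mathcal R$ of the quantum trace, applied to the relevant tensor factor. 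By Habiro's theorem recalled above, $J_T\in(\tilde{\mathcal{U}}_q^{ev})^{\tilde\otimes n}$, so the entire problem is transported into the integral form, and it remains to control what the functionals $\tr_q^{\tilde P'_l}$ do on $\tilde{\mathcal{U}}_q^{ev}$ and how they interact on the completed tensor power.

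The crux is a single-variable divisibility lemma: $\tr_q^{\tilde P'_l}$ maps $\tilde{\mathcal{U}}_q^{ev}$ into $\frac{\{2l+1\}_{q,l+1}}{\{1\}_q}\mathbb{Z}[q,q^{-1}]$. To establish it I would fix an explicit $\mathbb{Z}[q,q^{-1}]$-spanning set of $\tilde{\mathcal{U}}_q^{ev}$ (built from divided powers of $e$ and $f$ together with the Cartan part) and evaluate $\tr_q^{\tilde P'_l}$ on each spanning element. The element $\tilde P'_l$ is engineered for precisely this: the prefactor $\frac{v^l}{\{l\}_q!}$ together with the product $\prod_{i=0}^{l-1}(V_2-v^{2i+1}-v^{-2i-1})$ produces cyclotomic cancellations, so that the trace of a spanning element vanishes below a certain degree and, above it, is forced to carry the divisor $\{2l+1\}_{q,l+1}/\{1\}_q$. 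Concretely this reduces to $q$-binomial identities for the traces $\tr_q^{V_k}$ of the generators, reorganized through the linear factors of $\tilde P'_l$.

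To pass from the knot case to the link case I would exploit the structure of the completed tensor power together with the distinguished role of the maximal color. I apply the $n-1$ functionals $\tr_q^{\tilde P'_{l_i}}$ with $i\ne j$ first; by the lemma each of these already maps $\tilde{\mathcal{U}}_q^{ev}$ into $\mathbb{Z}[q,q^{-1}]$, and the completion is arranged so that contracting these factors of $J_T\in(\tilde{\mathcal{U}}_q^{ev})^{\tilde\otimes n}$ converges to an element of $\tilde{\mathcal{U}}_q^{ev}$ with coefficients in $\mathbb{Z}[q,q^{-1}]$. Applying the remaining functional $\tr_q^{\tilde P'_{l_j}}$ at the maximal index $j$ and invoking the lemma once more yields divisibility by $\frac{\{2l_j+1\}_{q,l_j+1}}{\{1\}_q}$. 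The maximal color is the one made to carry the divisor because the other factors are spent only on integrality, and the vanishing of $\tilde P'_{l_i}$ on representations of dimension exceeding $l_i+1$ truncates the degrees in the contracted factors; choosing $j$ with $l_j$ maximal gives the strongest single-factor conclusion compatible with this truncation.

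The main obstacle, I expect, is the single-variable lemma: pinning down the exact power of $\{2l+1\}_{q,l+1}/\{1\}_q$ dividing each quantum trace requires careful bookkeeping of $q$-binomial coefficients and a $\tr_q$-adapted description of the integral form $\tilde{\mathcal{U}}_q^{ev}$. A secondary technical point is to justify that the partial-trace operation is well defined on the completed tensor power and preserves integrality, that is, that the infinite sums defining elements of $(\tilde{\mathcal{U}}_q^{ev})^{\tilde\otimes n}$ remain convergent and $\mathbb{Z}[q,q^{-1}]$-integral after contracting all but one tensor factor.
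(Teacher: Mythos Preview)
This theorem is not proved in the present paper; it is quoted from Habiro~\cite{H2} as background, so there is no proof here to compare against directly. That said, your outline is essentially the strategy Habiro uses in~\cite{H2}: pass to a bottom tangle $T$ with closure $L$, invoke $J_T\in(\tilde{\mathcal{U}}_q^{ev})^{\tilde\otimes n}$, and then control the quantum traces $\tr_q^{\tilde P'_l}$ on the integral form. Your identification of the single-variable lemma (that $\tr_q^{\tilde P'_l}$ carries $\mathcal{U}_q^{ev}$ into $\frac{\{2l+1\}_{q,l+1}}{\{1\}_q}\mathbb{Z}[q,q^{-1}]$) as the crux is correct, and Habiro proves exactly such a statement.

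One point deserves more care than you give it. You describe the compatibility with the completion as a ``secondary technical point,'' but it is in fact where the choice of the \emph{maximal} index $j$ is forced. The completion $(\tilde{\mathcal{U}}_q^{ev})^{\tilde\otimes n}$ is defined via a filtration in which the filtration degree sits in a single tensor slot at a time; after contracting the $n-1$ slots $i\neq j$ by $\tr_q^{\tilde P'_{l_i}}$, the surviving infinite sum in the $j$th slot is truncated only because $\tr_q^{\tilde P'_{l_i}}$ kills filtration pieces of degree exceeding $l_i$. What remains in the $j$th slot then lives in filtration degree at most $\max_{i\neq j} l_i\leq l_j$, and it is precisely this bound that lets the single-variable divisibility lemma at level $l_j$ apply. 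If you had instead tried to extract the divisor at a non-maximal index, the surviving slot could carry filtration degree larger than $l_j$, and the lemma would not control it. So the role of $l_j=\max_i l_i$ is structural, not merely a matter of choosing the strongest available conclusion; your proposal should make this explicit rather than relegating it to a side remark.
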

Here $J_{L; \tilde P'_{l_1},\ldots , \tilde P'_{l_n}}$ is the colored Jones polynomial of $L$ associated to $\tilde P'_{l_1},\ldots , \tilde P'_{l_n}$ (see Section \ref{bottom}).
The above theorem is an important technical step in Habiro's construction of the unified Witten-Reshetikhin-Turaev invariants for integral homology spheres.
Habiro  \cite{H2} also  proved that Conjecture \ref{Habico} would imply the following Theorem \ref{4}, \textit{with a ribbon link replaced by a  boundary link.}
Thus,  Theorem \ref{4}  follows  from Theorem \ref{1} and Habiro's argument in \cite{H2}.
\begin{theorem}\label{4}
Let $L$ be an $n$-component ribbon link with $0$-framing.
We have
\begin{align*}
J_{L; \tilde P'_{l_1},\ldots , \tilde P'_{l_n}}\in \frac{\{ 2l_j+1\}_{q, l_j+1}}{\{1\} _q} I_{l_1}\cdots \hat I_{l_j}\cdots I_{l_n},
\end{align*} 
for $l_1,\ldots , l_n\geq 0$, where $j$  is a number such that   $l_j=\max\{l_i\}_{1\leq i\leq n}$.
Here,  for $l\geq 0$, $I_{l}$  is the ideal in $\mathbb{Z}[q,q^{-1}]$ generated by the elements $\{l-k\}_q!\{k\}_q!$ for 
$k=0,\ldots, l$, and    $\hat I_{l_j}$ denotes omission of $I_{l_j}$.  
\end{theorem}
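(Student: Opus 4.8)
The plan is to deduce Theorem \ref{4} from Theorem \ref{1} by running the argument of Habiro in \cite{H2}, the only change being that the membership $J_T\in (\bar U_q^{ev})\;\hat {}^{\;\hat \otimes n}$, which Habiro could only conjecture for boundary bottom tangles (Conjecture \ref{Habico}), is now unconditionally available for ribbon bottom tangles via Theorem \ref{1}. Since Habiro already proved the implication ``Conjecture \ref{Habico} $\Rightarrow$ Theorem \ref{4} for boundary links'', and that implication used the hypothesis only through this membership, substituting ribbon for boundary everywhere yields the statement as written.

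First I would realize $L$ as the closure of an $n$-component bottom tangle $T$. Because $L$ is a $0$-framed ribbon link, one can choose $T$ to be a $0$-framed ribbon bottom tangle, so that Theorem \ref{1} applies and gives $J_T\in (\bar U_q^{ev})\;\hat {}^{\;\hat \otimes n}$. By the universality property recalled in the introduction, the colored Jones polynomial $J_{L; \tilde P'_{l_1},\ldots , \tilde P'_{l_n}}$ is obtained from $J_T$ by applying to the $i$-th tensor factor the quantum trace colored by $\tilde P'_{l_i}$, for $i=1,\ldots ,n$.

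Next I would use the description of $(\bar U_q^{ev})\;\hat {}^{\;\hat \otimes n}$ as the topological closure over $\mathbb{Z}[q,q^{-1}]$ of the span of tensor products of the distinguished generators of $\bar U_q^{ev}$. Writing $J_T$ as a convergent $\mathbb{Z}[q,q^{-1}]$-linear combination of such tensor products reduces the problem to evaluating a single quantum trace colored by $\tilde P'_l$ on each generator. Here I would invoke Habiro's trace computation from \cite{H2}: the quantum trace colored by $\tilde P'_l$ of a generator of $\bar U_q^{ev}$ lands in the ideal $I_l$, while for the maximal color $l_j$ it supplies the common factor $\frac{\{2l_j+1\}_{q, l_j+1}}{\{1\}_q}$. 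Taking the product of these contributions over the $n$ factors and summing over the expansion of $J_T$ gives membership in $\frac{\{2l_j+1\}_{q, l_j+1}}{\{1\}_q} I_{l_1}\cdots \hat I_{l_j}\cdots I_{l_n}$. It is exactly the finer subalgebra $(\bar U_q^{ev})\;\hat {}^{\;\hat \otimes n}$, rather than the coarser $(\tilde {\mathcal{U}}_q^{ev})^{\tilde \otimes n}$ available in the algebraically-split case, that produces the additional ideals $I_{l_i}$ beyond the single factor.

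The main obstacle does not lie in this deduction but in the preceding Theorem \ref{1}, whose proof in Section \ref{proof} furnishes the one genuinely new ingredient; once that membership is granted, the remaining work is to transcribe Habiro's trace computations, to check that the ideal containments are preserved under the topological completion (so that the infinite sum does not enlarge the target beyond the stated product of ideals), and to confirm that realizing a $0$-framed ribbon link as the closure of a $0$-framed ribbon bottom tangle is compatible with the normalizations of \cite{H2}.
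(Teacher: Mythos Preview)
Your proposal is correct and matches the paper's own treatment: the paper does not give an independent proof of Theorem \ref{4} but simply observes that Habiro's argument in \cite{H2} (which derives the ideal containment from membership of $J_T$ in $(\bar U_q^{ev})\;\tilde{}^{\;\tilde\otimes n}$) applies verbatim once Theorem \ref{1} supplies the even stronger membership $J_T\in (\bar U_q^{ev})\;\hat{}^{\;\hat\otimes n}\subset (\bar U_q^{ev})\;\tilde{}^{\;\tilde\otimes n}$ for ribbon bottom tangles. Your sketch of how that argument runs (quantum traces of $\tilde P'_l$ on generators of $\bar U_q^{ev}$ landing in $I_l$, with the maximal color contributing the extra factor $\{2l_j+1\}_{q,l_j+1}/\{1\}_q$) is an accurate summary of Habiro's computation.
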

\begin{remark}
For $m\geq 1$, let $\Phi _m(q)\in \mathbb{Z}[q]$ denote the $m$th cyclotomic polynomial.
It is not difficult to prove that $I_{l}, l\geq 0,$ is contained in the principle ideal generated by  $\prod_m\Phi _m(q)^{f(l,m)}$, where $
f(l,m)=\max \{0, \big\lfloor \frac{l+1}{m}\big\rfloor -1 \}$. 
Here for $r\in \mathbb{Q}$, we denote by $\lfloor r \rfloor$  the largest integer smaller than  or  equal to $r$.
\end{remark}
\begin{remark}
As we have mentioned, Eisermann \cite{Ei} proved that the Jones polynomial $V(L)\in \mathbb{Z}[v,v^{-1}]$ of an $n$-component
ribbon link $L$ is divisible by the Jones polynomial $V(O^n)=(v+v^{-1})^n$ of the  $n$-component unlink $O^n$.
This result does not follow  directly from Theorem \ref{4}.
However, we  give another proof of it in  \cite{master} by proving a refinement of Theorem \ref{1} involving
a subalgebra of $U_h^{\hat\otimes  n}$ smaller than $ (\bar U_q^{ev})\;\hat  {}^{\;\hat  \otimes n}$. We do not describe it in the present paper since the proof in \cite{master} is quite complicated and also since we expect further refinements.
\end{remark}

\subsection{Organization of the paper.}
The rest of the paper is organized as follows.
In Section \ref{bot}, we define  bottom tangles and ribbon bottom tangles. 
In Section \ref{pre}, we define the quantized enveloping algebra $U_h$, and its subalgebras.
In Section \ref{bottom},  we consider the universal $sl_2$ invariant of   bottom tangles and ribbon bottom tangles.
In Sections \ref{proof}, we    prove   Theorem \ref{1}. 
In Section \ref{exam}, we consider the cases of the Borromean tangle and  the Borromean rings.

\section{Bottom tangles and ribbon bottom tangles.}\label{bot}
In this section, we recall from \cite{H1} the notion of  \textit{bottom tangles}.
We also define the notion of  \textit{ribbon bottom tangles}, which is implicit in \cite{H1}. 
\subsection{Bottom tangles.}
An $n$-component \textit{bottom tangle} $T=T_1\cup \cdots \cup T_n $ is an oriented, ordered, framed tangle in a cube $[0,1]^3$
consisting of $n$ arcs $T_1,\ldots ,T_n,$ whose boundary points  are on the  bottom line $[0,1]\times \{\frac{1}{2}\}\times \{0\}$, such that for each $i=1,\ldots ,n$, the component $T_i$ runs 
from the $2i$th boundary point to the $(2i-1)$th  boundary point, where the boundary points are ordered by  the first coordinate.
As usual, we draw a bottom tangle as a diagram in a rectangle, see Figure \ref{fig:closure} (a),(b).
\begin{figure}
\centering
\includegraphics[width=13cm,clip]{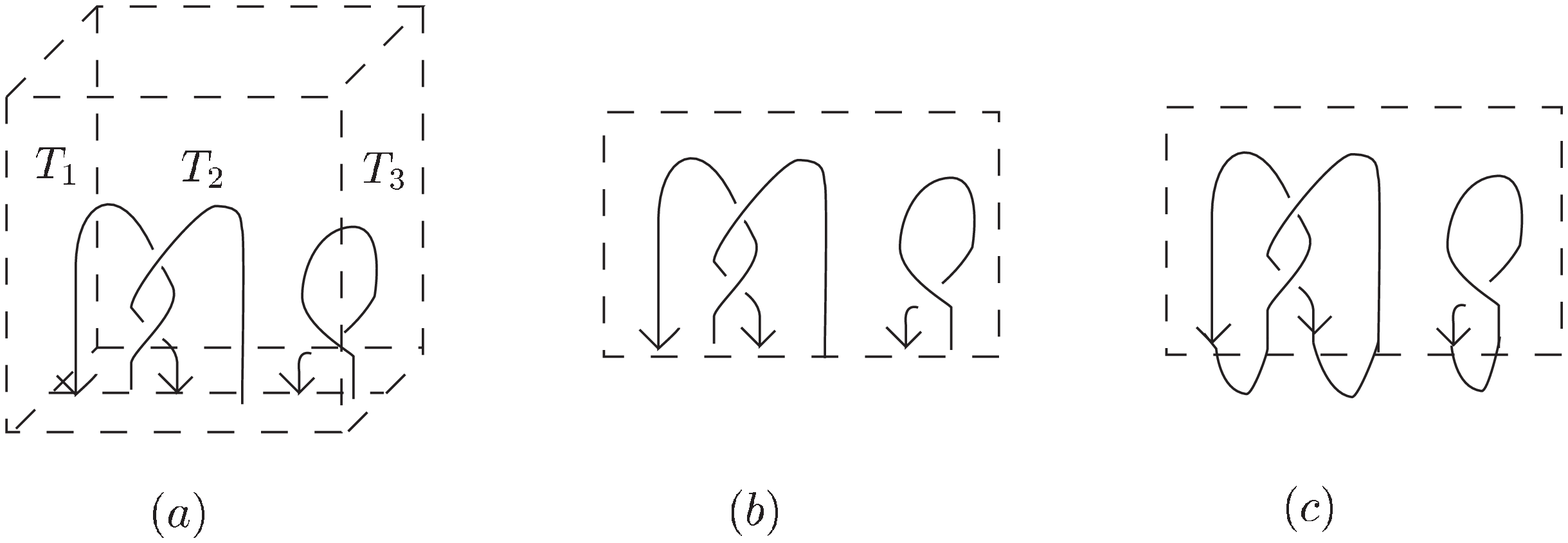}
\caption{ (a) A $3$-component bottom tangle $T=T_1\cup T_2\cup T_3$. (b) A diagram   of $T$ in a rectangle. (c) The closure  of $T$.}\label{fig:closure}
\end{figure}%
For each $n\geq 0$, let $BT_n$ denote the set of the isotopy classes of $n$-component
bottom tangles, and set $BT=\bigcup_{n\geq 0}BT_n$.

The \textit{closure}    of $T$ is the link  obtained from $T$  by pasting a ``$\cup$-shaped tangle'' to each component of  $T$,
as depicted in Figure \ref{fig:closure} (c). 
For any link $L$, there is a bottom tangle whose closure is isotopic to $L$. 

The \textit{linking matrix} $\mathrm{Lk}(T)$ of a bottom tangle $T=T_1\cup \cdots \cup T_n $ is defined as that of
the closure of  $T$. Thus, for $1\leq i\neq j\leq n$, the linking number of $T_i$ and $T_j$  is defined as the 
linking number of the corresponding components in the closure of $T$,
and, for $1\leq i\leq n,$ the framing of $T_i$ is defined as the framing of the closure of $T_i$. 

Two bottom tangle $T,T'\in BT_n$ are \textit{concordant} if there is an proper embedding;
\begin{align*}
f\colon\  \coprod ^n[0,1]\times [0,1] \hookrightarrow [0,1]^3\times [0,1],
\end{align*}
 such that
$f(\coprod ^n[0,1]\times \{0\})=T\times \{0\}$, $f(\coprod ^n[0,1]\times \{1\})=T'\times \{1\}$, and
\begin{align*}
f(\coprod ^n\partial [0,1]\times [0,1])= \partial T \times [0,1]=\partial T' \times [0,1].
\end{align*}
\subsection{Ribbon bottom tangles.}
 \begin{definition}
A   bottom tangle  $T\in BT$ is called a \textit{ribbon bottom tangle} if and only if the closure of $T$ is a  ribbon link.
\end{definition}
\textit{A system of ribbon disks} for an $n$-component  bottom tangle 
$T=T_1\cup \ldots\cup T_n$ is a  immersed surface with ribbon singularities in $[0,1]^3$ consisting of $n$ disks  bounded by the link
 $\tilde{T}= (T_1\cup \gamma_1)\cup \ldots \cup (T_n\cup \gamma_n) $,
where $\gamma_i \subset [0, 1]\times \{\frac{1}{2} \} \times \{ 0 \}$ is the line segment such that $\partial \gamma_i =\partial T_i$ for  $1\leq i\leq n$.
\begin{proposition}\label{rs}
A bottom tangle $T\in BT_n$ is a ribbon bottom tangle if and only if it  admits  a  system of ribbon disks.
\end{proposition}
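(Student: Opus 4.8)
The plan is to reduce the proposition to a single geometric fact, namely that ribbon disks for a link contained in a ball can be pushed into that ball, by first observing that the closure of $T$ and the link $\tilde T$ are isotopic in $S^3$. I would begin by comparing the two ways of closing up the arcs $T_i$: the closure $\hat T$ caps each component $T_i$ by a $\cup$-shaped arc joining its two (adjacent) endpoints, while $\tilde T$ caps it by the straight segment $\gamma_i$ joining the same two endpoints along the bottom line. For distinct $i$ these capping arcs are mutually disjoint, unknotted, and may all be taken in the half-space just below the bottom face, so the $\cup$-arcs can be isotoped onto the segments $\gamma_i$ rel the rest of the diagram. Hence $\hat T$ and $\tilde T$ are isotopic links in $S^3$, and since being a ribbon link is an isotopy invariant, $T$ is a ribbon bottom tangle if and only if $\tilde T$ bounds a system of ribbon disks in $S^3$. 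The proposition thus reduces to the equivalence: $\tilde T$ bounds ribbon disks in $[0,1]^3$ if and only if it bounds ribbon disks in $S^3$.

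One direction is immediate, since a system of ribbon disks for $T$ already lives in $[0,1]^3 \subset S^3$ and hence is a system of ribbon disks for $\tilde T$ in $S^3$. The content is the converse. Given ribbon disks $D = f(\Delta)$ in $S^3$ with $\partial D = \tilde T$, where $f\colon \Delta \to S^3$ is the ribbon immersion of a disjoint union of disks $\Delta$, I want to produce ribbon disks inside the cube. After a small preliminary isotopy I may assume $\tilde T \subset \mathrm{int}[0,1]^3$. The key point is that $D$ is only $2$-dimensional, so its image misses some point $p$ in the complementary ball $S^3 \setminus [0,1]^3$; identifying $S^3 \setminus \{p\} \cong \mathbb{R}^3$ places both $D$ and the cube in $\mathbb{R}^3$. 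I would then take a radial ``squeeze'' diffeomorphism $h\colon \mathbb{R}^3 \to \mathrm{int}[0,1]^3$ that is the identity on a smaller ball containing $\tilde T$. Then $h\circ f$ is an immersion of $\Delta$ into $[0,1]^3$ with boundary $h(\tilde T) = \tilde T$, and it is again a ribbon immersion: since $h$ is injective, the double-point set and the two preimage arcs of each ribbon singularity (one properly embedded, one interior) are carried over unchanged. Thus $h(D)$ is a system of ribbon disks for $T$, completing the equivalence.

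The main obstacle is this converse direction, and specifically the fact that $D$ is immersed rather than embedded. The naive approach would be to remove the intersection circles $D \cap \partial[0,1]^3$ by innermost-disk surgery; but for an immersed surface one must check at each step that the surgery does not destroy the ribbon singularity structure, and controlling the sheets that pass through the region being swept is delicate. The squeeze trick is designed to sidestep this entirely, replacing the surgery by a single ambient diffeomorphism that manifestly preserves ribbon-ness.

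The remaining technical point to nail down is the preliminary isotopy freeing $\tilde T$ from $\partial[0,1]^3$: its segments $\gamma_i$ lie on the bottom face, so I must first push them slightly into the interior, carry out the squeeze there, and then push them back onto the bottom face at the end while keeping the disks inside the cube. This is a purely local adjustment near the bottom face and does not interfere with the rest of the argument, but it is the one place where the boundary geometry of $[0,1]^3$ must be handled with care rather than treated as a round ball.
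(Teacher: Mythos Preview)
Your argument follows the same overall strategy as the paper's: reduce to showing that a system of ribbon disks for $\tilde T$ in $S^3$ can be pushed into the cube. You are actually more careful than the paper about the ``get the disks near the cube'' step, spelling out the squeeze diffeomorphism $h$; the paper simply asserts that up to isotopy of $S^3$ rel $\tilde T$ the disks lie in the enlarged box $[0,1]^2\times[-1,1]$, which is your squeeze in terser form.

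The one substantive point on which the paper is explicit and you are not is the final boundary adjustment. The paper drags the segments $\gamma_i$ straight down to the bottom of the enlarged box and \emph{explicitly allows new ribbon singularities to be created} as the moving boundary arc passes through the interior of the disk system, then rescales the box to $[0,1]^3$. Your ``push back'' step is this same move, but it is not quite the innocuous local adjustment you describe: it cannot be realized by an ambient diffeomorphism of the closed cube (such a map sends interior to interior, so it cannot carry $\gamma_i'$ onto the bottom face), nor by the inverse of your preliminary isotopy (which would push part of the squeezed disk below height $0$). What actually happens is that you attach a thin collar from $\gamma_i'$ down to $\gamma_i$, and this collar may meet the already-squeezed disk. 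Those new double arcs are automatically ribbon --- one preimage lies along the boundary being dragged and is therefore properly embedded --- so the proof does go through, but this is the genuine content of the step, and it is exactly what the paper's phrase ``if we admit introducing new ribbon singularities'' is flagging.
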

\begin{proof}
Let $X\subset  S^3$ be a   system of  ribbon disks for the link  $\tilde T$.
Up to isotopy in $S^3$ fixed on the link  $\tilde T$,  we can assume that $X\subset [0,1]^2\times [-1,1]$.
If we admit introducing new ribbon singularities, we can transform $X$ into a system of ribbon disks for the bottom tangle $T$
 by pulling the segment part  $\gamma_i \subset [0, 1]\times \{\frac{1}{2} \} \times \{ 0\}$  straight down to the $[0,1]\times \{\frac{1}{2} \} \times \{ -1\}$, 
 and transforming  $[0,1]^2\times [-1,1]$ into $[0,1]^3$ by isotopy of $S^3$. For example,  see Figure \ref{fig:singularity2}.
\end{proof}

\begin{figure}

\centering
\includegraphics[width=13cm,clip]{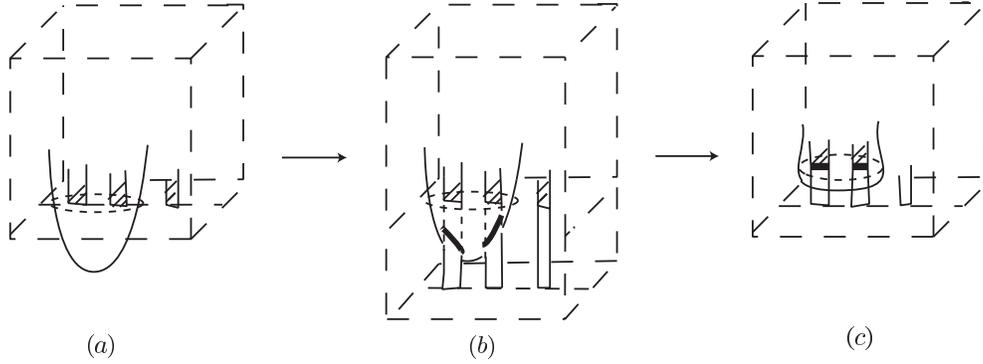}
\caption{(a) A system of  ribbon disks for the closure link $\tilde T$. 
(b) A system of ribbon disks for a link isotopic to $\tilde T$.
(c) A system of ribbon disks for bottom tangle $T$.}\label{fig:singularity2}
\end{figure}
\section{The quantized enveloping algebra $U_h$ and its subalgebras.}\label{pre} 
We mostly follow the notations in \cite{H2}.
\subsection{The quantized enveloping algebra $U_h$.}\label{env}
Recall that $v=\exp\frac{h}{2}$, and $q=v^2.$
We denote by  $U_h$ the $h$-adically complete $\mathbb{Q}[[h]]$-algebra,
topologically generated by the elements $H, E,$ and $F$, satisfying the relations
\begin{align*}
HE-EH=2E, \quad HF-FH=-2F, \quad EF-FE=\frac{K-K^{-1}}{v-v^{-1}},
\end{align*}
where we set 
\begin{align*}
K=v^H=\exp\frac{hH}{2}.
\end{align*}

We equip $U_h$  with a topological $\mathbb{Z}$-graded algebra structure with  $\deg F=-1$,  $\deg E=1$, and   $\deg H=0$.
For a homogeneous element $x$ of $U_h$, the degree of $x$ is denoted by $|x|$.

There is a unique  complete ribbon Hopf algebra  structure  on  $U_h$   such that
\begin{align*}
\Delta (H)&=H\otimes 1+1\otimes H, \quad  \varepsilon (H)=0, \quad S(H)=-H,
\\
\Delta (E)&=E\otimes 1+K\otimes E, \quad \varepsilon (E)=0, \quad  S(E)=-K^{-1}E,
\\
\Delta (F)&=F\otimes K^{-1}+1\otimes F, \quad  \varepsilon (F)=0, \quad  S(F)=-FK.
\end{align*}
The universal $R$-matrix and its inverse are given by
\begin{align}
R&=D\bigg(\sum_{n\geq 0}v^{\frac{1}{2}n(n-1)}\frac{(v-v^{-1})^n}{[n]!}F^n\otimes E^n\bigg),\label{rm1}
\\
R^{-1}&=D^{-1}\bigg(\sum_{n\geq 0}(-1)^nv^{-\frac{1}{2}n(n-1)}\frac{(v-v^{-1})^n}{[n]!}F^nK^n\otimes K^{-n}E^n\bigg),\label{rm2}
\end{align} 
where $D=v^{\frac{1}{2}H\otimes H} =\exp \big(\frac{h}{4}H\otimes H\big)\in U_h^{\hat {\otimes }2}$.
The ribbon element and its inverse are given by
\begin{align*}
r=\sum \bar {\alpha }K^{-1}\bar {\beta}=\sum \bar {\beta} K\bar {\alpha}, \quad  r^{-1}=\sum \alpha K\beta =\sum\beta K^{-1}\alpha,
\end{align*}
where  $R=\sum \alpha \otimes \beta $, and  $R^{-1}=(S\otimes 1)R=\sum \bar {\alpha} \otimes \bar {\beta }$.

We use  notations   $D=\sum D^+_{[1]}\otimes D^+_{[2]}$, and  $D^{-1}=\sum D^-_{[1]}\otimes D^-_{[2]}$.
We shall use the following formulas.
\begin{align}
&\sum D^+_{[2]}\otimes D^+_{[1]}=D, \quad   (\Delta \otimes 1)D=D_{13}D_{23},\label{d1}
\\
&(\varepsilon \otimes 1)(D)=1, \quad (1\otimes S)D=(S\otimes 1)D=D^{-1},
\\
&D(1\otimes x)=(K^{|x|}\otimes x)D,\label{exD}
\end{align}
where $D_{13}=\sum D^+_{[1]}\otimes 1\otimes D^+_{[2]}$, $D_{23}=1\otimes D$, and  $x$ is a homogeneous element of $U_h$.

\subsection{Subalgebras $U_{\mathbb{Z},q}$ and  $U_{\mathbb{Z},q}^{ev}$ of  $U_h$.} \label{uqb1}
For $i\in \mathbb{Z}, n\geq 0$, set
\begin{align*}
[i] = \frac{v^i-v^{-i}}{v-v^{-1}},\quad  [n]! = [n][n-1]\cdots [1].
\end{align*}
Let $U_{\mathbb{Z}}$ denote   Lusztig's  integral form of $U_h$ (cf. \cite{L}), which  is defined to be the $\mathbb{Z}[v,v^{-1}]$-subalgebra of $U_h$ generated by $K,K^{-1},$ 
 $E^{(n)}=E^n/[n]!$, and  $F^{(n)}=F^n/[n]!$ for $n\geq 1$.

Set
\begin{align*}
&\tilde E^{(n)}=(v^{-1}E)^n/[n]_q!=v^{-\frac{1}{2}n(n+1)}E^{(n)}, \\ &\tilde {F}^{(n)}=F^nK^n/[n]_q!=v^{-\frac{1}{2}n(n-1)}F^{(n)}K^n,
\end{align*}
for $n\geq 0$.
Let $U_{\mathbb{Z}, q}$ denote the $\mathbb{Z}[q,q^{-1}]$-subalgebra of $U_{\mathbb{Z}}$ generated by
$K,K^{-1}, \tilde E^{(n)}$, and  $\tilde F^{(n)}$ for $n\geq 1$.
Note that 
\begin{align*}
U_{\mathbb{Z}}=U_{\mathbb{Z}, q}\otimes _{\mathbb{Z}[q,q^{-1}]}\mathbb{Z}[v,v^{-1}].
\end{align*}
Let $U_{\mathbb{Z}, q}^{ev}$ denote the $\mathbb{Z}[q,q^{-1}]$-subalgebra of $U_{\mathbb{Z}, q}$ generated by
$K^2,K^{-2}, \tilde E^{(n)}$, and  $\tilde F^{(n)}$ for $n\geq 1$.
$U_{\mathbb{Z}, q}$ is equipped with a $(\mathbb{Z}/2\mathbb{Z})$-graded $\mathbb{Z}[q,q^{-1}]$-algebra
structure
$$
U_{\mathbb{Z}, q}=U_{\mathbb{Z}, q}^{ev}\oplus KU_{\mathbb{Z}, q}^{ev}.
$$

There is a Hopf $\mathbb{Z}[q,q^{-1}]$-algebra structure  on $U_{\mathbb{Z}, q}$ inherited from $U_h$ such that
\begin{align}
\Delta (K^i)=K^i\otimes K^i,& \quad S^{\pm  1}(K^i)=K^{-i},
\\
\Delta (\tilde E^{(n)})=\sum_{j=0}^n \label{De1}
\tilde E^{(n-j)}K^j\otimes \tilde E^{(j)},& \quad
\Delta (\tilde {F}^{(n)})=\sum_{j=0}^n 
\tilde {F}^{(n-j)}K^j\otimes \tilde {F}^{(j)}, \\
S^{\pm 1}(\tilde E^{(n)})=(-1)^nq^{\frac{1}{2}n(n\mp 1)}K^{-n}\tilde E^{(n)},& \quad
S^{\pm 1}(\tilde {F}^{(n)})=(-1)^nq^{-\frac{1}{2}n(n\mp 1)}K^{-n}\tilde {F}^{(n)},
\\ 
\varepsilon (K^i)=1, \quad \varepsilon (\tilde {E}^{(n)})&=\varepsilon (\tilde {F}^{(n)})=\delta _{n,0},
\end{align}
for $i\in \mathbb{Z},  n\geq 0$.
\subsection{Subalgebras $\bar U_q$ and  $\bar U_q^{ev}$ of  $U_h$.} \label{uqb2}
Let $\bar U$ denote   the $\mathbb{Z}[v,v^{-1}]$-subalgebra of $U_h$ 
generated by the elements $K,K^{-1}$, $(v-v^{-1})E$, and $(v-v^{-1})F$ (cf. \cite{De}).

Set  
$$e=v^{-1}(q-1)E, \quad f=(q-1)FK. $$
Let $\bar {U}_q$ denote the $\mathbb{Z}[q,q^{-1}]$-subalgebra of $U_{\mathbb{Z},q}$ generated by
the elements $K,K^{-1},e$ and $f$.
Note that 
\begin{align*}
\bar U=\bar U_{ q}\otimes _{\mathbb{Z}[q,q^{-1}]}\mathbb{Z}[v,v^{-1}].
\end{align*}
Let $\bar {U}_q^{ev}$ denote the $\mathbb{Z}[q,q^{-1}]$-subalgebra of $U_{\mathbb{Z},q}^{ev}$ generated by
the elements $K^2,K^{-2},e$ and $f$. We have
\begin{align*}
\bar {U}_q^{ev}=\bar {U}_q\cap U_{\mathbb{Z},q}^{ev}, \quad \bar {U}_q=\bar {U}_q^{ev}\oplus K\bar {U}_q^{ev}.
\end{align*}

There is  a   Hopf $\mathbb{Z}[q,q^{-1}]$-algebra structure  on $\bar {U}_q$ inherited from $U_h$ such that
\begin{align}
\Delta (e^n)=\sum_{j=0}^n 
\begin{bmatrix}n \\j \end{bmatrix}_q
e^{n-j}K^j\otimes e^j,& \quad \Delta (f^n)
=\sum_{j=0}^n\begin{bmatrix}n \\j \end{bmatrix}_q q^{-j(n-j)} f^{n-j}K^j\otimes f^j, 
\\
S^{\pm 1}(e^n)=(-1)^nq^{\frac{1}{2}n(n\mp 1)}K^{-n}e^n,& \quad S^{\pm 1}(f^n)=(-1)^nq^{-\frac{1}{2}n(n\mp 1)}K^{-n}f^n, 
\\
 \varepsilon (e^n)&=\varepsilon (f^n)=\delta _{n,0},
\end{align}
for $n\geq 0$.

We have
\begin{align}
e^mf^n=\sum_{p=0}^{\min (m,n)} q^{\frac{1}{2}p(p+1)-nm}& \{p\}_q!
\begin{bmatrix}m \\p \end{bmatrix}_q
\begin{bmatrix}n\\p \end{bmatrix}_q
f^{n-p}\{ H-m-n+2p \}_{q,p} e^{m-p},\label{B2}
\end{align}
for $m,n\geq 0$.
Here, for $i\in \mathbb{Z}$ and $p\geq 0$, we set
$$
\{H+i\}_{q,p}=\{H+i\}_q\{H+i-1\}_q\cdots \{H+i-p+1\}_q,
$$
where
$$
\{H+j\}_q=q^{H+j}-1=q^jK^2-1,
$$
for $j\in \mathbb{Z}$.

The following lemma, which is a $\mathbb{Z}[q,q^{-1}]$-version of a well known result for $\bar U$ by De Concini and Procesi \cite{De}, can be proved by using the formula  (\ref{B2}).
\begin{lemma}\label{F2}
 $ \bar {U}_q$ (resp. $ \bar {U}_q^{ev}$) is freely $\mathbb{Z}[q,q^{-1}]$-spanned by the elements $f^iK^j e^k$
 (resp. $f^iK^{2j} e^k$)  with $i,k\geq 0$ and $j\in \mathbb{Z}$. 
\end{lemma}

\subsection{Adjoint action.}
 We use  the left adjoint action of $U_h$ defined by
\begin{align*}
\ad(a\otimes b):=\sum a'bS(a''), 
\end{align*}
where  $\Delta (a)=\sum a'\otimes a''$. 
We also use the notation $a\triangleright b:=\ad(a\otimes b).$

The following proposition is suggested by Habiro.
In fact, Habiro and Le \cite{H4} prove a generalization of a $\mathbb{Z}[v,v^{-1}]$-version of the following proposition with $i=0$
to quantized enveloping algebras for all simple Lie algebras.
\begin{proposition} \label{Habi}
For $i=0,1$, we have
\begin{align*}
 U_{\mathbb{Z}, q}\triangleright K^i\bar {U}_q^{ev}\subset K^i\bar {U}_q^{ev}.
\end{align*}
\end{proposition}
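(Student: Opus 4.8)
The plan is to prove the stronger statement that $\bar U_q^{ev}$ is stable under the adjoint action of the larger algebra $U_{\mathbb{Z},q}$, and then deduce the case $i=1$ as a corollary. Since $\ad$ is an algebra action, it suffices to check the claim on a set of algebra generators of $U_{\mathbb{Z},q}$, namely $K^{\pm 1}$, $\tilde E^{(n)}$, and $\tilde F^{(n)}$ for $n\geq 1$; stability under each generator then propagates to the whole algebra because $\ad(ab\otimes x)=\ad(a\otimes \ad(b\otimes x))$. By Lemma \ref{F2}, $\bar U_q^{ev}$ is freely $\mathbb{Z}[q,q^{-1}]$-spanned by the monomials $f^iK^{2j}e^k$, so the problem reduces to showing that each generator of $U_{\mathbb{Z},q}$ sends each such basis monomial back into $\bar U_q^{ev}$.

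First I would record the explicit adjoint actions on the three types of generators of $\bar U_q^{ev}$, that is on $K^{\pm 2}$, $e$, and $f$. The action of $K$ is easy: $K\triangleright x = K^{|x|}x$ on a homogeneous element, and since every basis monomial $f^iK^{2j}e^k$ is homogeneous of degree $k-i$, we get $K\triangleright (f^iK^{2j}e^k) = q^{(k-i)}f^iK^{2j}e^k\in \bar U_q^{ev}$, using $K^{|x|}=q^{|x|/2}\cdot(\text{even power of }K)$ and checking the parity carefully. The substantive computations are for $\tilde E^{(n)}$ and $\tilde F^{(n)}$. Using the coproduct formulas (\ref{De1}) together with the antipode values, one expands $\ad(\tilde E^{(n)}\otimes x)=\sum \tilde E^{(n-j)}K^j\,x\,S(\tilde E^{(j)})$ and similarly for $\tilde F^{(n)}$. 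I would first compute $\tilde E^{(n)}\triangleright e$, $\tilde E^{(n)}\triangleright f$, and $\tilde E^{(n)}\triangleright K^{\pm 2}$ (and the $\tilde F$ analogues), expressing the results via the commutation relation (\ref{B2}) and the $q$-integer identities; the key point to verify is that all coefficients lie in $\mathbb{Z}[q,q^{-1}]$ and all monomials produced stay in the span given by Lemma \ref{F2}, with the correct even parity in $K$.

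The main obstacle will be controlling the adjoint action on a \emph{general} monomial $f^iK^{2j}e^k$ rather than just on the generators, because $\ad(\tilde E^{(n)}\otimes -)$ is not a derivation — its coproduct is a sum of $n+1$ terms, producing a $q$-Leibniz rule with binomial-type coefficients. I would handle this by establishing a product formula: using $\Delta(\tilde E^{(n)})=\sum_j \tilde E^{(n-j)}K^j\otimes \tilde E^{(j)}$, the action on a product $yz$ decomposes as a weighted sum $\sum_j (\tilde E^{(n-j)}K^j\triangleright y)(\tilde E^{(j)}\triangleright z)$ after accounting for how $K^j$ passes through $y$ via (\ref{exD}). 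This lets me reduce stability on arbitrary monomials to stability on the single letters $e$, $f$, $K^{\pm 2}$ computed above, at the cost of tracking the extra $K^j$ factors and their gradings. The delicate bookkeeping is precisely showing these accumulated $K$-powers remain even and the coefficients remain Laurent polynomials in $q$ rather than merely in $v$; this is where the integrality built into the definitions of $\tilde E^{(n)}$, $e$, and $f$ (all designed to clear the $[n]!$ and $v$-denominators) is essential.

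Finally, for the case $i=1$, I would use the decomposition $U_{\mathbb{Z},q}=U_{\mathbb{Z},q}^{ev}\oplus KU_{\mathbb{Z},q}^{ev}$ together with $\bar U_q = \bar U_q^{ev}\oplus K\bar U_q^{ev}$, and observe that $K$ is group-like with $K\triangleright(K\bar U_q^{ev})$ controlled by the homogeneous action already described, so that $U_{\mathbb{Z},q}\triangleright K\bar U_q^{ev}\subset K\bar U_q^{ev}$ follows by multiplying the $i=0$ statement through by $K$ and checking the gradings match.
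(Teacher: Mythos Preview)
Your approach is sound in outline and will work, but it differs from the paper's in one structural respect and is underdeveloped at the end.

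The paper does not bootstrap from the single generators $e,f,K^{\pm 2}$ via the module-algebra identity $a\triangleright(yz)=\sum (a'\triangleright y)(a''\triangleright z)$. Instead it computes, in one shot, closed formulas for $\tilde E^{(n)}\triangleright f^{i_1}K^{i_2}e^{i_3}$ and $\tilde F^{(n)}\triangleright f^{i_1}K^{i_2}e^{i_3}$ for \emph{arbitrary} $i_2\in\mathbb{Z}$ (these are the formulas (\ref{KE}) and (\ref{KF})), and reads off directly that the result lies in $K^{i_2}\bar U_q^{ev}$. This handles $i=0$ and $i=1$ simultaneously, and has the side benefit that the explicit expression (\ref{KE}) is reused later in the proof of Proposition~\ref{EQQ}(ii). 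Your route---checking the action on $e$, $f$, $K^{\pm 2}$ and then propagating to products---is perfectly legitimate and arguably more conceptual, but it does not produce those formulas, and you should be aware that you will need them (or something equivalent) downstream.

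Two specific points. First, the formula you wrote for the $K$-action is off: for homogeneous $x$ one has $K\triangleright x = KxK^{-1} = q^{|x|}x$, a scalar multiple of $x$; there is no stray power of $K$ to worry about, so the ``checking the parity carefully'' remark is unnecessary. Second, your $i=1$ argument is not correct as stated: one cannot simply ``multiply the $i=0$ statement through by $K$'', since $a\triangleright(Kx)=\sum (a'\triangleright K)(a''\triangleright x)$ and you would still need $a'\triangleright K\in K\bar U_q^{ev}$ for each Sweedler piece $a'$. The clean fix is either to include $K$ itself among the ``single letters'' you act on (so that the inductive product argument runs over the basis $f^{i_1}K^{i_2}e^{i_3}$ of $\bar U_q$ with arbitrary $i_2$, exactly as the paper does), or to observe once and for all that the adjoint action of every generator of $U_{\mathbb{Z},q}$ preserves the $\mathbb{Z}/2\mathbb{Z}$ $K$-parity grading on $\bar U_q=\bar U_q^{ev}\oplus K\bar U_q^{ev}$, and then combine this with $U_{\mathbb{Z},q}\triangleright \bar U_q\subset \bar U_q$.
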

\begin{proof}
In view of Lemma \ref{F2}, it is enough to prove that $x\triangleright f^{i_1}K^{i_2}e^{i_3}\in K^{i_2}\bar {U}_q^{ev}$ for every $x\in \{K,K^{-1}, \tilde E^{(n)},\tilde F^{(n)} \ | \ n\geq 0\}$ and  $i_1,i_3\geq 0$, $i_2\in \mathbb{Z}$. By  computation, we have 
\begin{align}
K^{\pm  1}&\triangleright  f^{i_1}K^{i_2}e^{i_3}=q^{\pm  (i_3-i_1)}f^{i_1}K^{i_2}e^{i_3}, \label{K} 
\\
\begin{split}
\tilde E^{(n)}&\triangleright  f^{i_1}K^{i_2}e^{i_3}
\\=&\sum_{p=0}^{\min( i_1, n )}(-1)^n q^{\frac{1}{2}p(p+1)-n(i_1+i_2)+2i_2p} \label{KE}
\begin{bmatrix}i_1 \\p \end{bmatrix}_q
f^{i_1-p}K^{i_2}g(i_1,i_2,i_3,n,p)e^{i_3+n-p},
\end{split}
\\
\begin{split}\label{KF}
\tilde F^{(n)}&\triangleright  f^{i_1}K^{i_2}e^{i_3}
\\ =&\sum_{p=0}^{\min( i_3, n )} q^{\frac{1}{2}p(p+1)-n(i_1+i_2)+2i_2p}
\begin{bmatrix}i_3 \\p \end{bmatrix}_q f^{n+i_1-p}K^{i_2}g(i_3,i_2,i_1,n,p)e^{i_3-p},
\end{split}
\end{align}
where
\begin{align*}
g(i_1,i_2,i_3,n,p)=& \sum_{s=0}^{p}(-1)^sq^{\frac{1}{2}s(s+1)-s(n-p+i_1)}\begin{bmatrix}p \\s \end{bmatrix}_q 
\begin{bmatrix}n-p+i_2+i_3+s-1 \\n-p \end{bmatrix}_qK^{2s}.
\end{align*}
The right hand sides of (\ref{K})--(\ref{KF}) are all contained  in $K^{i_2}\bar {U}_q^{ev}$, hence we have the assertion. 
\end{proof}

\section{The universal $sl_2$ invariant of bottom tangles.}\label{bottom}
In this section,  we define   the universal $sl_2$ invariant of  bottom tangles \cite{H1}, and study the values of it.
Then we discuss the case of ribbon bottom tangles.
\subsection{Decorated diagrams.}
We use diagrams of tangles obtained from copies of the fundamental tangles, as depicted in Figure \ref{fig:fundamental},
by pasting horizontally and vertically. 
\begin{figure}
\centering
\includegraphics[width=9cm,clip]{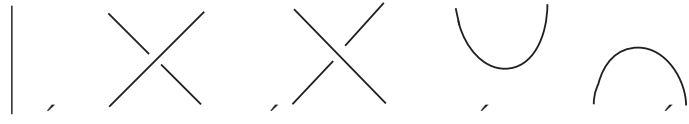}
\caption{Fundamental tangles. The orientations of the strands are arbitrary. }\label{fig:fundamental}
\end{figure}%
 A \textit{decorated diagram} of  a bottom tangle $T\in BT$ is a diagram $P$ of $T$
together with finitely many dots on strands, each labeled by an element of  $U_h$.
We also  allow  pairs of dots, each  connected  by  an oriented dashed line which is labeled by an element of $U_h^{\hat \otimes 2}$
so that the first  tensorand is attached to the start point of the line, and the second tensorand to the end point,  see Figure \ref{fig:deco} (a).
\begin{figure}
\centering
\includegraphics[width=10cm,clip]{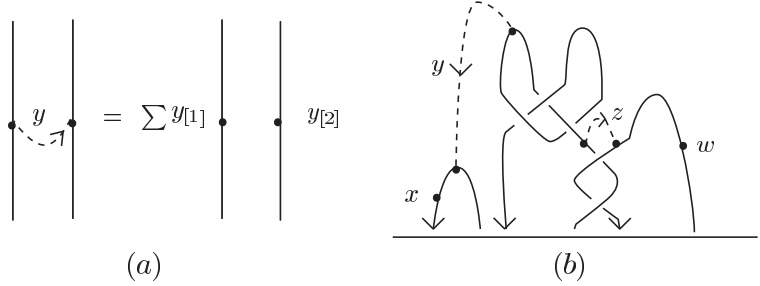}
\caption{(a) How to label an element $y=\sum y_{[1]}\otimes y_{[2]}$ to the  connected  dots. (b) A decorated diagram $P$.}\label{fig:deco}
\end{figure}
 If the element  $y\in U_h^{\hat \otimes 2}$ on it  is symmetric, we do not have to specify the orientation of a dashed line.

For every decorated diagram $P$ for an $n$-component bottom tangle $T=T_1\cup\cdots \cup T_n \in BT_n$,
we define an element  $J(P)\in U_h^{\hat {\otimes }n}$ as follows.
The $i$th component of $J(P)$ is defined  to be the product 
of the elements put on the component corresponding to $T_i$, where the elements are read off along each component
reversing the orientation of $P$, and   written from left to  right. 
For example, for the decorated diagram $P$ depicted in  Figure \ref{fig:deco} (b), we have
\begin{align*}
J(P)=\sum xy_{[2]}\otimes y_{[1]}z_{[1]}\otimes z_{[2]}w,
\end{align*}
where $y=\sum y_{[1]}\otimes y_{[2]}$ and $z=\sum z_{[1]}\otimes z_{[2]}$.
In what follows, we sometimes  identify a decorated diagram and its image by $J$.
For example, the picture depicted in  Figure \ref{fig:exD} represents the formula (\ref{exD}).
\begin{figure}
\centering
\includegraphics[width=7cm,clip]{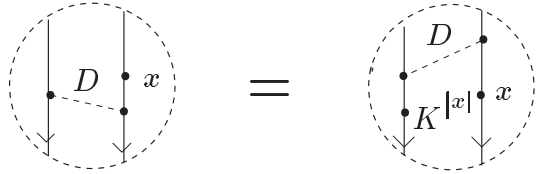}
\caption{A graphical version of  (\ref{exD}).
By the two pictures above, we mean two decorated diagrams of a  bottom tangle which are identical outside the dotted circles.}\label{fig:exD}
\end{figure}
\subsection{The universal $sl_2$ invariant of bottom tangles.}\label{bottom inv}
For  $T=T_1\cup \cdots \cup T_n\in BT_n$, we define the universal $sl_2 $ invariant $J_T\in U_h^{\hat {\otimes }n}$ of $T$ as follows.
We choose a  diagram $P$  of $T$. We denote by $C(P)$ the set of the crossings of $P.$
We call a map 
\begin{align*}
s\colon\; C(P) \ \ \rightarrow \ \ \{0,1,2,\ldots\}
\end{align*}
a \textit{state}. We denote by  $\mathcal{S}(P)$ the set of states for $P$.
For each state $s\in \mathcal{S}(P)$, we define a decorated diagram $(P,s)$ (by abusing the notation) as follows.

We  rewrite the $R$-matrix (\ref{rm1}) and its inverse (\ref{rm2}) as 
\begin{align}
&R^{\pm1}= D^{\pm1}\sum_{n\geq 0}R_n^{\pm}, \\
R_n^+=q^{\frac{1}{2}n(n-1)}
&\tilde {F}^{(n)}K^{-n}\otimes e^n, \quad
R_n^{-}=(-1)^{n}\tilde {F}^{(n)}\otimes K^{-n}e^n.
\end{align}  
We use the notations
$R_n^+=\sum R^+_{n[1]}\otimes R^+_{n[2]}$ and 
$R^-_n=\sum R^-_{n[1]}\otimes R^-_{n[2]}.$

 For each fundamental tangle in $P$, we attach  elements 
following the rule described in Figure \ref{fig:cross}, where  ``$S'$'' should be replaced with id if 
the string is oriented downward, and with $S$ otherwise, see Figure \ref{fig:S'}.
Thus we have an element $J(P,s)\in U_h^{\hat \otimes n}$ as the image of the decorated diagram $(P,s)$ by $J$.

\begin{figure}
\centering
\includegraphics[width=10cm,clip]{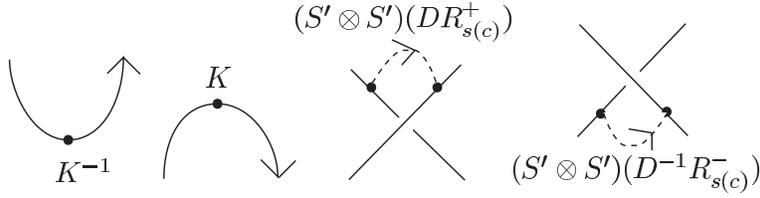}
\caption{How to place elements  on the fundamental tangles.}\label{fig:cross}
\end{figure}
\begin{figure}[t]
\centering
\includegraphics[width=7cm,clip]{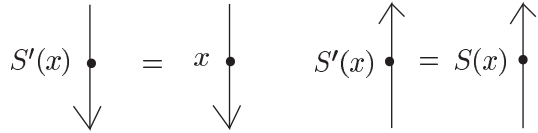}
\caption{The definition of $S'$.}\label{fig:S'}
\end{figure}%
Set 
$$
J_T=\sum_{s\in \mathcal{S}(P)}J(P, s).
$$
As is well known \cite{O},  $J_T$ does not depend on the choice of the diagram $P$, and defines an isotopy invariant of bottom tangles.

For example, let us compute the universal $sl_2$ invariant $J_C$ of a bottom tangle $C$ with a diagram $P$  as depicted in  Figure \ref{fig:base} $(a)$,
where  $c_1$ (resp. $c_2$) denotes the upper (resp. lower)  crossing  of $P$.
The decorated diagram $(P,s)$ for the state $s\in \mathcal{S}(P)$   is depicted in  Figure \ref{fig:base} $(b)$, where we set $m=s(c_1), n=s(c_2)$.
\begin{figure}[t]
\centering
\includegraphics[width=8cm,clip]{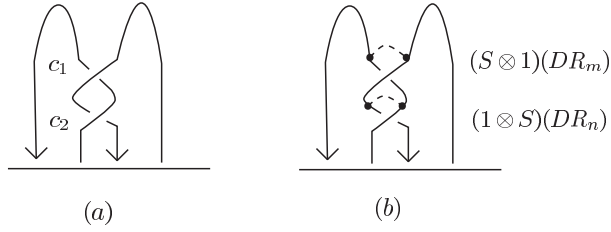}
\caption{$(a)$ A  diagram $P$ of $C\in BT_2$. $(b)$ The decorated  diagram $(P,s)$.
} \label{fig:base}
\end{figure}
We have
\begin{align*}
J_{C}&=\sum_{s\in \mathcal{S}(P)}J(P,s)
\\
&=\sum_{s\in \mathcal{S}(P)}\sum S(D^+_{[1]}R^+_{m[1]})S(D'^+_{[2]}R^+_{n[2]})\otimes D'^+_{[1]}R^+_{n[1]}D^+_{[2]}R^+_{m[2]}
\\
&=\sum_{m,n\geq 0} (-1)^{m+n}q^{-n+2mn}D^{-2}(\tilde F^{(m)}K^{-2n}e^n\otimes \tilde F^{(n)}K^{-2m}e^m).
\end{align*}
where  $D^{\pm 1}=\sum D^{\pm}_{[1]}\otimes D^{\pm}_{[2]} =\sum D'^{\pm}_{[1]}\otimes D'^{\pm}_{[2]}$.
\subsection{The colored Jones polynomial.}
If $V$ is a finite dimensional representation of $U_h$, then the quantum trace $\tr_q^V(x)$ in $V$
of an element $x\in U_h$ is defined by   
\begin{align*}
\tr_q^V(x)=\tr^V(\rho_V(K^{-1}x))\in \mathbb{Q}[[h]],
\end{align*}
where $\rho_V\colon\; U_h\rightarrow \End(V)$ denotes the left action of $U_h$ on $V$, and $\tr^V\colon\; \End (V)\rightarrow \mathbb{Q}[[h]]$
denotes the trace in $V$.
For every element $y=\sum_n a_nV_n\in \mathcal{R}$, $a_n\in \mathbb{Q}(v)$, 
we set 
\begin{align*}
\tr_q^{y}(x)=\sum_na_n \tr_q^{V_n}(x)\in \mathbb{Q}(v)
\end{align*}
for $x\in U_h$.

The universal $sl_2$ invariant of bottom tangles has  a universality property to the colored Jones polynomials of links as the following. 
\begin{proposition}[Habiro \cite{H2}]\label{r2}
Let $L=L_1\cup \cdots \cup  L_n$ be an $n$-component, ordered,  oriented, framed link in $S^3$.
Choose an $n$-component bottom tangle $T$ whose closure  is isotopic to $L$.
For  $y_1,\ldots, y_n \in \mathcal{R}$, the colored Jones polynomial 
$J_{L;y_1, \ldots ,y_n}$ of $L$ can be obtained from $J_T$ by 
\begin{align*}
J_{L;y_1,\ldots,y_n}=(\tr_q^{y_1}\otimes \cdots \otimes \tr_q^{y_n})(J_T).
\end{align*}
\end{proposition}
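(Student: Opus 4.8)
The plan is to reduce to irreducible colours and then to identify the image of the universal invariant under the relevant representations with the Reshetikhin-Turaev operator invariant, the closure of each component being realized by a quantum trace. Since both sides of the asserted identity are $\mathbb{Q}(v)$-linear in each argument $y_i$, and since $\tr_q^{y}$ was defined by linear extension from the $\tr_q^{V_l}$, it suffices to treat the case $y_i=V_{l_i}$ with $l_i\geq 1$. Thus I must show
\[
J_{L;V_{l_1},\ldots ,V_{l_n}}=(\tr_q^{V_{l_1}}\otimes\cdots\otimes\tr_q^{V_{l_n}})(J_T).
\]

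First I would recall the functoriality of the universal invariant for tangles, following Ohtsuki \cite{O}. The element $J_T$ is assembled from the building blocks $R^{\pm 1}$, $D^{\pm 1}$ and the ribbon data exactly as the Reshetikhin-Turaev functor $F$ assembles the braiding, twist and duality morphisms; under a representation $\rho_V$ the universal $R$-matrix becomes the braiding on $V\otimes V$ and the ribbon element becomes the twist. Consequently, colouring the $i$-th component of the bottom tangle $T$ by $V_{l_i}$ and applying $\rho_{V_{l_1}}\otimes\cdots\otimes\rho_{V_{l_n}}$ to $J_T$ produces precisely the operator invariant of the coloured bottom tangle, regarded as a morphism in the category of finite-dimensional $U_h$-modules. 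This step is essentially bookkeeping: one checks that each fundamental tangle of the diagram $P$ contributes under $J$ the same morphism that $F$ assigns to it, and that the reading-off convention defining $J(P)$ (elements read against the orientation of $P$, written left to right) matches the order of composition in $F$.

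Next I would analyse the closure. The closure of $T$ is obtained by pasting a ``$\cup$-shaped tangle'' to each component $T_i$, thereby closing the corresponding strand into a loop. In the coloured category each such closure turns the operator invariant into a trace over $V_{l_i}$; the point is to verify that it is exactly the quantum trace $\tr_q^{V_{l_i}}(x)=\tr^{V_{l_i}}(\rho_{V_{l_i}}(K^{-1}x))$ appearing in the statement. This is where the ribbon structure enters: the cap and cup, composed around the turn-back, contribute the grouplike element implementing $S^2$, which for $U_h$ is conjugation by $K^{-1}$ (indeed $S^2(x)=K^{-1}xK$ on the generators). Hence the factor produced by the closure is exactly $K^{-1}$, and the categorical trace around the loop becomes $\tr^{V_{l_i}}(\rho_{V_{l_i}}(K^{-1}\,\cdot\,))$. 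Applying this to each of the $n$ components and using the functoriality of the previous step yields the claimed factorisation.

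The main obstacle I anticipate is the careful matching in this last step: one must track orientations, the position of the grouplike element $K^{-1}$, and the framing corrections carried by the $D^{\pm 1}$ factors, so as to be sure that the closure yields the right-handed quantum trace with $K^{-1}$ rather than a left-handed or otherwise twisted variant. Once this compatibility is verified for a single $\cup$-shaped closure, the general statement follows by applying it independently to each component and invoking linearity in the colours $y_1,\ldots ,y_n$.
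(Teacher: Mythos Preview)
The paper does not give a proof of this proposition at all: it is stated as a result of Habiro with the citation \cite{H2} and no argument is supplied. So there is nothing in the paper to compare your attempt against.

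That said, your sketch is the standard route to this fact and is essentially correct. The reduction to irreducible colours by $\mathbb{Q}(v)$-linearity is immediate from the definitions in the paper, and the identification of $(\rho_{V_{l_1}}\otimes\cdots\otimes\rho_{V_{l_n}})(J_T)$ with the Reshetikhin--Turaev operator invariant of the coloured bottom tangle is exactly the universality property alluded to in the introduction and in \cite{O,H1}. The one place to be careful, as you note, is the bookkeeping for the closure: in the conventions of Section~\ref{bottom inv} the extremum decorations are $K^{\pm 1}$ placed on $\vec\cap$ and $\vec\cup$ according to orientation, and the $\cup$-shaped closure attached to each component contributes precisely the factor $K^{-1}$ that turns the ordinary trace into $\tr_q^{V}(x)=\tr^{V}(\rho_V(K^{-1}x))$. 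Once that is checked for a single component the $n$-component case follows tensorwise. If you want a reference rather than a from-scratch verification, this is worked out in \cite{H1} (see the discussion of the universal invariant of bottom tangles there) and in \cite{H2}, which is why the present paper simply cites it.
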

\subsection{Values of the universal $sl_2$ invariant of bottom tangles.}
In this subsection we consider the value of $J(P,s)$ for a decorated diagram $(P,s)$.
Let us prepare some notations.

For $n\geq 1, 1\leq i\leq n$,  and  for  $X\in U_h$, we define
$X_i\in U_h^{\hat {\otimes n }}$  by
$$X_i=1\otimes \cdots \otimes X
\otimes \cdots \otimes 1,
$$
 where $X$ is at the $i$th position.
 
For $1\leq i ,j\leq n$, and  for  $Y=\sum y_1\otimes y_2\in U_h^{\hat {\otimes }2}$, we define  $Y_{ij} \in U_h^{\hat {\otimes n }}$ by
$$
Y_{ij}=\sum (y_1)_i(y_2)_j.
$$
 For every symmetric integer  matrix $M=(m_{ij})_{1\leq i,j \leq n}$ of size $n\geq 1$,
we define  two invertible  elements $D^M, \tilde {D}^M\in U_h^{\hat {\otimes }n}$ by
\begin{align*}
D^M&=\prod _{1\leq i,j\leq n}D_{ij}^{m_{ij}}=\prod _{1\leq i<j\leq n}D_{ij}^{2m_{ij}}\prod _{1\leq i\leq n}(v^{H^2/2})_i^{m_{ii}},
\\
\tilde {D}^{M}&=D^{M}\prod _{1\leq i\leq n} K_i^{m_{ii}}=\prod _{1\leq i< j\leq n}D_{ij}^{2m_{ij}}\prod _{1\leq i\leq n}(v^{H^2/2}K)_i^{m_{ii}}.
\end{align*}
Later, we shall use  the following proposition.
\begin{proposition}\label{WW}
Let  $T=T_1\cup \cdots \cup T_n$ be  an $n$-component  bottom tangle.
For every diagram $P$ of $T$ and every  state $s\in \mathcal{S}(P)$, we have
\begin{align*}
J(P,s)\in \tilde D^{\mathrm{Lk}(T)}({U}_{\mathbb{Z},q}^{ev})^{\otimes n}.
\end{align*}
\end{proposition}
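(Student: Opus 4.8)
The plan is to separate the Gaussian (``$D$-part'') of $J(P,s)$ from a residual part lying in $U_{\mathbb{Z},q}$, and to control each separately. The starting point is the decomposition of the $R$-matrix as $R^{\pm}=D^{\pm}\sum_{n\ge 0}R_n^{\pm}$. Since the reduced pieces $R_n^{+}=q^{\frac12 n(n-1)}\tilde F^{(n)}K^{-n}\otimes e^n$ and $R_n^{-}=(-1)^n\tilde F^{(n)}\otimes K^{-n}e^n$ have both tensorands in $U_{\mathbb{Z},q}$ with coefficients in $\mathbb{Z}[q,q^{-1}]$, and since the antipode formulas of Sections \ref{uqb1} and \ref{uqb2} keep $\tilde F^{(n)}$, $e^n$, $K$ inside $U_{\mathbb{Z},q}$ (indeed $S^{\pm1}(\tilde F^{(n)})$ and $S^{\pm1}(e^n)$ are $\mathbb{Z}[q,q^{-1}]$-multiples of $K^{-n}\tilde F^{(n)}$, $K^{-n}e^n$), the element $J(P,s)$ is a product, read along the components, of the $D^{\pm}$'s and of elements of $U_{\mathbb{Z},q}$. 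Thus it suffices to prove two things: (i) after sliding all $D^{\pm}$ factors to a fixed position, the collected Gaussian is exactly $\tilde D^{\mathrm{Lk}(T)}$ times a residual $W\in U_{\mathbb{Z},q}^{\otimes n}$; and (ii) $W\in (U_{\mathbb{Z},q}^{ev})^{\otimes n}$.

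For (i) I would slide every $D^{\pm}$ to the top using \eqref{d1} and \eqref{exD}; by \eqref{exD} each pass across a homogeneous element $x$ produces a correction $K^{|x|}$, which again lies in $U_{\mathbb{Z},q}$. Grouping the collected Gaussians according to the pair of components they join, the crossings between distinct $T_i$ and $T_j$ contribute $D_{ij}^{2\,\mathrm{lk}(T_i,T_j)}$, the signed crossing count being the linking number, while the self-crossings of $T_i$ together with the framing of $T_i$ assemble into the diagonal factor $(v^{H^2/2}K)_i^{m_{ii}}$. Since the $D$-part is the ``abelian'' part of the $R$-matrix, these exponents are isotopy invariants and are hence forced to be the entries of $\mathrm{Lk}(T)$, so the collected Gaussian is $\tilde D^{\mathrm{Lk}(T)}$; the extra factor $\prod_i K_i^{m_{ii}}$ distinguishing $\tilde D^{\mathrm{Lk}(T)}$ from $D^{\mathrm{Lk}(T)}$ is produced precisely by this self-crossing bookkeeping.

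The main obstacle is (ii), the evenness of the residual $W$. I would track the $\mathbb{Z}/2$-grading $U_{\mathbb{Z},q}=U_{\mathbb{Z},q}^{ev}\oplus KU_{\mathbb{Z},q}^{ev}$, that is, the parity of the total power of $K$ in each tensor factor. The generators $\tilde F^{(n)}$ and $e^n$ are even, so odd parity can arise only from the factors $K^{-n}$ occurring in $R_n^{\pm}$, from the slide-corrections $K^{|x|}$ of \eqref{exD}, and from the $K$'s produced by the antipode; in each case the parity equals a crossing label, respectively a degree, modulo $2$. The claim I would establish is that, summed along the closed-up component $T_i$, the total $K$-parity of the $i$-th tensor factor of the collected element equals $m_{ii}\bmod 2$, so that it is absorbed exactly by the factor $K_i^{m_{ii}}$ of $\tilde D^{\mathrm{Lk}(T)}$ and leaves $W$ even.

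This parity conservation is where the real work lies. It requires a case analysis over the crossing/orientation types of the fundamental tangles (with $S'$ equal to $\mathrm{id}$ or $S$) together with the coproduct and antipode formulas, exploiting that each component of a bottom tangle is a single arc whose two endpoints are adjacent on the bottom line. I expect this bookkeeping, rather than the formal extraction of $\tilde D^{\mathrm{Lk}(T)}$ in step (i), to be the crux of the proof.
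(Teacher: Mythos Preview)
Your overall strategy---separate the Gaussian $D^{\pm}$-contributions from the ``reduced'' $R_n^{\pm}$-contributions, collect the Gaussians into $\tilde D^{\mathrm{Lk}(T)}$ modulo even $K$-powers, then verify evenness of the residual---is precisely the paper's approach. But step (i) has a genuine gap: you have not correctly identified the source of the extra factor $\prod_i K_i^{m_{ii}}$ in $\tilde D^{\mathrm{Lk}(T)}$. The self-crossings contribute only $D_{ii}^{\pm}=(v^{H^2/2})_i^{\pm 1}$ to the Gaussian part; they do not by themselves produce any power of $K$. The $K^{\pm 1}$ factors come instead from the decorated cups and caps $\vec\cap$ and $\vec\cup$ (Figure~\ref{fig:cross}), which you do not mention at all. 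The product $\kappa_i$ of these $K^{\pm1}$'s along $T_i$ is not equal to $K^{m_{ii}}$, only congruent modulo $\bar U_q^{ev0}$; showing this parity match is the content of the paper's Lemma~\ref{Cap}, proved by a Reidemeister/crossing-change invariance argument reducing to the trivial tangle $\cap$. Without this lemma your claim that ``the collected Gaussian is exactly $\tilde D^{\mathrm{Lk}(T)}$'' is unjustified.

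For step (ii) you correctly flag the parity bookkeeping of the slide-corrections $K^{\pm|x|}$ as the crux, but the ``case analysis over crossing/orientation types'' you propose is not how the paper closes this. The paper organises the slide-corrections into $J(P_i,s)^{\diamondsuit}=K^{d_i}$ and shows $d_i$ is even by a chessboard colouring of the complement of $P_i$: the regions are coloured black/white and the strand $P_i$ is split into segments $B_i\cup W_i$ accordingly, so that for every crossing $c$ the two white dots $E_c,F_c$ have the same parity of crossings-with-$P_i$ on their way to the start point. This global topological argument (Lemma~\ref{dia}) is what makes the parity cancellation work uniformly; a local case analysis alone does not obviously suffice.
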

Before proving Proposition \ref{WW}, we  modify the dots of the decorated diagram  $(P,s)$.
Then  we define  three decorated diagrams $(P,s)^{\circ }, (P,s)^{\bullet },$ and  $(P,s)^{\diamond  }$, which we use in the proof of Proposition \ref{WW}.

In what follows, we can work up to the equivalence relation $\sim $ on $({U}_{\mathbb{Z},q})^{\otimes n}$ 
generated by multiplication on any tensorands by  $\pm q^{j}, K^{2j} ( j\in \mathbb{Z})$.
The modification process goes  as follows.  Let $c$ be a crossing of  $(P,s)$ with strands oriented downward,  and  set $m=s(c)$. As depicted in Figure \ref{fig:cross3}, we replace the two dots labeled by $D^{\pm 1}R_{m}^{\pm}$ with two  black  dots  labeled by 
  $D^{\pm 1}$   and  two white dots  labeled by  $R_{m}^{\pm}$.
\begin{figure}
\centering
\includegraphics[width=11cm,clip]{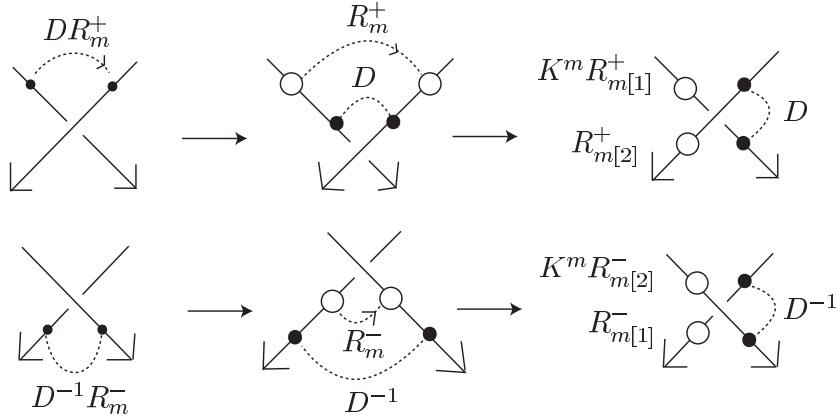}
\caption{The modification process of  $(P,s)$ on positive and negative crossings.}\label{fig:cross3}
\end{figure}%
Then  we slide  the black (resp. white) dots to the right hand side (resp. the left hand side)  of the crossings, and 
put the  produced element $K^{m}$ into the same dot of $R_m^{\pm}$.
Here the transformation  follows from  the formulas 
\begin{align*}
DR^+_m&=\sum D_{[1]}R^+_{m[1]}\otimes D_{[2]}R^+_{m[2]}
\\
&=\sum D_{[1]}K^{m}R^+_{m[1]}\otimes R^+_{m[2]} D_{[2]}.
\end{align*}
and
\begin{align*}
D^{-1}R^-_m&=\sum D^-_{[1]}R^-_{m[1]}\otimes D^-_{[2]}R^-_{m[2]}
\\
&=\sum R^-_{m[1]} D^-_{[1]}\otimes  D^-_{[2]} K^mR^-_{m[2]}.
\end{align*} 
Note that
\begin{align}
K^{m}R^+_{m[1]}\otimes  R^+_{m[2]}&\sim \label{rem1}
\tilde {F}^{(m)}\otimes e^m,
\\
R^-_{m[1]}\otimes K^mR^-_{m[2]}&\sim \tilde {F}^{(m)}\otimes e^m.\label{rem2}
\end{align}  
Similarly, we modify the dots on the other crossings as depicted in Figure \ref{fig:crosss2}. 
\begin{figure}
\centering
\includegraphics[width=11cm,clip]{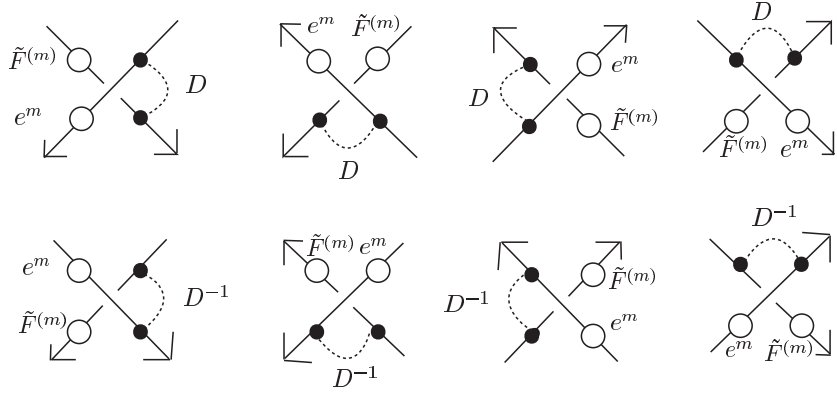}
\caption{Crossings of the decorated diagram $(P,s)$ after the modification.}\label{fig:crosss2}
\end{figure}%
We have completed the modification. By abusing the notation, we denote by $(P,s)$  the decorated diagram obtained from the modification.

We define  the  decorated diagrams $(P,s)^{\circ }$, $(P,s)^{\bullet },$ and  $(P,s)^{\diamond  }$  as follows.
\begin{itemize}
\item[(1)]
Let $(P,s)^{\circ }$ denote the  diagram $P$ together with the white dots  on crossings of  $(P,s)$.
Note that
\begin{align}
J(P,s)^{\circ }\in (U_{\mathbb{Z},q}^{ev})^{\otimes n}.\label{noten}
\end{align}
\end{itemize}
Let  $\vec \cap $, $\vec \cup $ and $\cap $ denote the fundamental tangles defined by
\begin{center}
\includegraphics[width=6cm,clip]{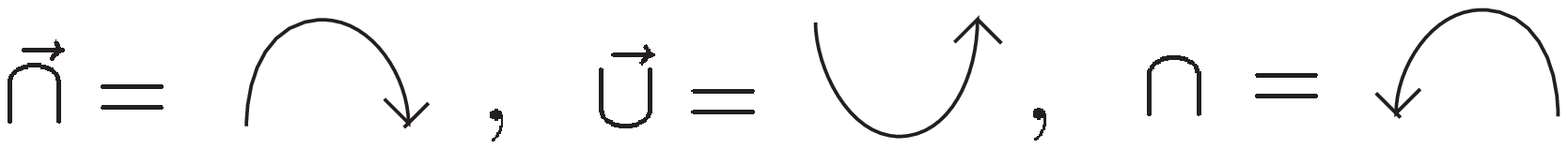}.
\end{center}
\begin{itemize}
\item[(2)]
Let $(P,s)^{\bullet }$ denote the diagram  $P$ with the   black dots labeled by $D^{\pm 1}$ on crossings of $(P,s)$, and  dots on $\vec \cap$ and $ \vec \cup $ of $(P,s)$.
\item[(3)]
For $i=1,\ldots,n,$ let $P_i$ denote the part of $P$ corresponding to $T_i$.
We call  the $2i$th (resp. $(2i-1)$th) boundary point of $P$ the \textit{start point} (resp.  \textit{end point}) of $P_i$.
 On $(P,s)$,  we slide all  white dots to the start points of the strands of $P$.
When we slide a  white dot through a dot on $\vec \cap$ or $ \vec \cup $, a scalar $q^j ( j\in \mathbb{Z})$  appears,
which we can ignore.
When we slide a  white dot through a dot labeled by $D^{\pm}$, a power of $K$   appears, see Figure \ref{fig:D2}.
We attach such element to a new white  diamond.
\begin{figure}
\centering
\includegraphics[width=7cm,clip]{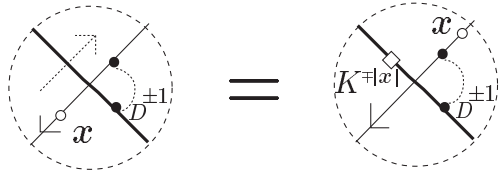}
\caption{The picture  when we slide a homogeneous  $x$ through a dot  labeled by $D^{\pm 1}$. 
This is essentially the same with the picture in Figure \ref{fig:exD}. }\label{fig:D2}
\end{figure}%
Let  $(P_i,s)^{\diamondsuit  }$ be the  diagram $P_i$ with the white diamonds  on $P_i$. Set $$J(P,s)^{\diamondsuit }=J(P_1,s )^{\diamondsuit }\otimes \cdots \otimes J(P_n,s )^{\diamondsuit }.$$
\end{itemize}
\begin{figure}
\centering
\includegraphics[width=10cm,clip]{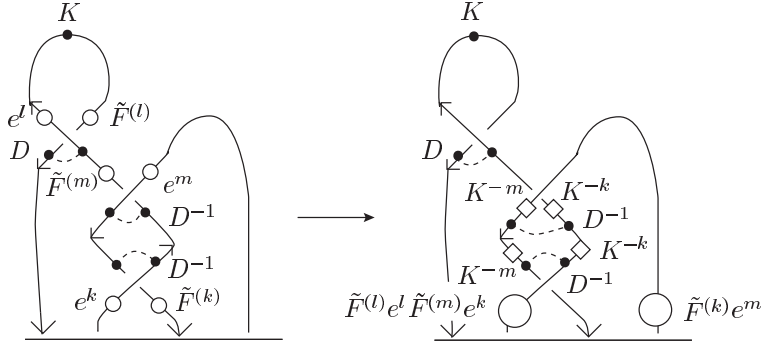}
\caption{The sliding process for a  decorated diagram $(P,s)$, where we set $s(c_1)=l, s(c_2)=m$, and $ s(c_3)=k$ for the upper, the middle, and the lower crossings $c_1,c_2,$ and $c_3,$ respectively.
We work up to multiplication by  $\pm q^{j}, K^{2j} (j\in \mathbb{Z})$. }\label{fig:LLink222}
\end{figure}%
For example, for the decorated diagram $(P,s)$ in Figure \ref{fig:LLink222}, we have 
\begin{align*}
&\mathrm{Lk}(T)=
\begin{pmatrix}
1 & -1 \\
-1 & 0
\end{pmatrix},
\\
&\tilde D^{\mathrm{Lk}(T)}=D^{-2}(v^{H^2 /2}K\otimes 1),
\\
&J(P,s)^{\circ }\sim \tilde F^{(l)} e^{l} \tilde F^{(m)} e^k\otimes  \tilde{F}^{(k)}e^m,
\\
&J(P,s )^{\bullet }\sim D^{-2}(v^{H^2/2}K\otimes 1),
\\
&J(P_1,s )^{\diamondsuit }\sim K^{-2k}\sim 1,
\\
&J(P_2,s)^{\diamondsuit }\sim K^{-2m}\sim 1.
\end{align*}

We reduce Proposition \ref{WW} to the following two lemmas.
\begin{lemma}\label{Cap}
For every  diagram $P$ of a bottom tangle $K\in BT_1$ with framing $r(K)\in \mathbb{Z}$, 
let $u(P)\in \mathbb{Z}_{\geq 0}$ be the total number of  the copies of  $\vec {\cap }$ and 
$\vec {\cup }$  which are contained in $P$.
Then,  the sum $u(P)+r(K)$ is even.
\end{lemma}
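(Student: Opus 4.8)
The plan is to prove the equivalent statement $u(P)\equiv r(K)\pmod 2$, reading the framing off the diagram as the writhe $w(P)=\sum_{c}\mathrm{sign}(c)$, summed over the self-crossings $c$ of $P$; this is the framing $r(K)$ of the closure, since the closing arc contributes no crossings. I would establish the congruence by showing that the residue $u(P)+r(K)\bmod 2$ is unchanged under each of the local moves connecting two $1$-component bottom-tangle diagrams, and then evaluating it on the trivial diagram. The moves I need are the Reidemeister moves $R1,R2,R3$, the planar (Morse) isotopies that create or cancel a local maximum–minimum pair and slide a crossing past a critical point, and crossing changes; together these reduce any diagram of $K$ to the single cap $\cap$ representing the trivial bottom tangle.

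First I would treat the framing-preserving moves. The moves $R2$, $R3$ and the crossing-slides change neither the number of copies of $\vec\cap,\vec\cup$ nor the writhe, so they fix $u$ and $r$ separately. A planar birth of a maximum–minimum pair inserts a ``finger'': because the two new extrema lie on the same side of the strand, inspection of the conventions for $\vec\cap,\vec\cup$ shows that they are \emph{either both counted or both uncounted}, so $u(P)$ changes by $0$ or $2$ while $w(P)$ is unchanged. A crossing change leaves the underlying projection, hence $u(P)$, untouched and alters the writhe by $\pm 2$. Thus every framing-preserving move changes $u+r$ by an even amount.

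The decisive move is $R1$. Adding a curl inserts a single maximum–minimum pair arising from one full extra turn of the tangent direction; the two new extrema now lie on \emph{opposite} sides, so exactly one of them lies among $\vec\cap,\vec\cup$. Hence a curl changes $u(P)$ by $1$, and it simultaneously changes $w(P)$ by $\pm 1$, so $R1$ again preserves $u+r\bmod 2$. Since the trivial cap $\cap$ is uncounted, it has $u=0$ and $r=0$; running any diagram of $K$ down to it then yields $u(P)+r(K)\equiv 0\pmod 2$, as required.

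The step I expect to be the genuine obstacle is the orientation bookkeeping for the created and destroyed extrema: one must verify, from the paper's definitions of $\vec\cap$, $\vec\cup$ and $\cap$, that a curl always contributes a \emph{single} counted extremum (net odd), whereas every framing-preserving modification contributes them in matched pairs (net even). This dichotomy is precisely what couples the purely Morse-theoretic count $u(P)$ to the crossing-theoretic writhe $r(K)$; conceptually it reflects Whitney's relation between the turning number of the underlying plane curve and the writhe modulo $2$, the curl being the elementary move that changes the turning number and the framing simultaneously by one.
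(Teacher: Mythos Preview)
Your proposal is correct and follows essentially the same strategy as the paper: show that $u(P)+r(K)\bmod 2$ is unchanged under a set of local moves connecting any diagram to the trivial cap $\cap$, then evaluate there. The paper's own proof compresses this to a single sentence (``the parity of $u(P)+r(K)$ does not change by the Reidemeister moves RI, RII, RIII, and crossing changes''), whereas you carry out the orientation bookkeeping explicitly---in particular the key dichotomy that a zigzag contributes a same-oriented pair of extrema (net even contribution to $u$) while an $R1$ curl contributes an opposite-oriented pair (net odd), which is exactly what the paper is tacitly using.
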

\begin{proof}
Note that the parity of $u(P)+r(K)$  does not change  by the Reidemeister moves RI, RII, RIII, and   crossing changes as  depicted in Figure \ref{fig:Ri}.
\begin{figure}
\centering
\includegraphics[width=9cm,clip]{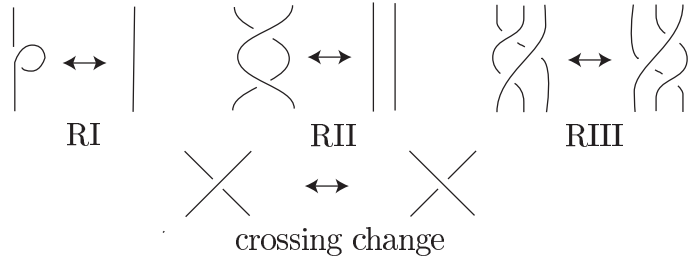}
\caption{The Reidemeister moves RI, RII, RIII, and  the crossing change.}\label{fig:Ri}
\end{figure}%
Since $P$ is equal to the bottom tangle $\cap $ up to those moves, we have
 \begin{align*}
u(P)+r(K)\equiv u(\cap )+r(\cap )=0 \ \  \pmod 2.
\end{align*}
This completes the proof.
\end{proof}
Let ${U}_h^0$  denote the $\mathbb{Q}[[h]]$-subalgebra of ${U}_h$ generated by $K,K^{-1}$.
Set
\begin{align*}
 \quad \bar {U}_q^{ev0}=\bar {U}_{q}^{ev}\cap {U}_h^0,
\end{align*}
which is  the $\mathbb{Z}[q,q^{-1}]$-subalgebra of $\bar {U}_{q}^{ev}$ generated by  $K^2,K^{-2}$. 
\begin{lemma}\label{bu}
We have
\begin{align*}
J(P,s )^{\bullet }\in \tilde  D^{\mathrm{Lk}(T)}(\bar U_q^{ev0})^{\otimes n}.
\end{align*}
\end{lemma}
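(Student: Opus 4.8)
The plan is to observe that every label occurring in $(P,s)^{\bullet}$ lies in the commutative ``Cartan part'' of $U_h$, so the whole computation reduces to collecting exponents. By the construction of $(P,s)^{\bullet}$, the only decorations present are the black dots labeled by $D^{\pm 1}=v^{\pm H\otimes H/2}$ at the crossings of $(P,s)$ together with dots labeled by $K^{\pm 1}$ (with $K=v^{H}$) at the copies of $\vec\cap$ and $\vec\cup$. All of these are functions of the commuting elements $H_1,\dots,H_n$; hence the various tensor factors commute, no $E$- or $F$-type terms appear, and $J(P,s)^{\bullet}$ is an honest product of powers of $D$ and $K$ with no ordering ambiguity.

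First I would partition the crossings of $P$ by the components meeting at them. A crossing between $T_i$ and $T_j$ with $i\neq j$ carries $D_{ij}^{\pm 1}$ with the sign of the crossing, so the product over all such crossings is $D_{ij}$ raised to the signed crossing number between $T_i$ and $T_j$, namely $2m_{ij}$ with $m_{ij}=\mathrm{Lk}(T)_{ij}$. A self-crossing of $T_i$ carries $(v^{H^2/2})_i^{\pm 1}$, and the product over these is $(v^{H^2/2})_i^{w_i}$, where $w_i$ is the writhe of $P_i$, which (using blackboard framing) equals the framing $m_{ii}$. The copies of $\vec\cap$ and $\vec\cup$ on $T_i$ together contribute a single factor $K_i^{a_i}$ for some $a_i\in\mathbb{Z}$. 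Collecting everything gives
\begin{align*}
J(P,s)^{\bullet}=\prod_{1\le i<j\le n}D_{ij}^{2m_{ij}}\;\prod_{1\le i\le n}\bigl(v^{H^2/2}\bigr)_i^{w_i}K_i^{a_i}.
\end{align*}
Comparing with $\tilde D^{\mathrm{Lk}(T)}=\prod_{i<j}D_{ij}^{2m_{ij}}\prod_{i}(v^{H^2/2}K)_i^{m_{ii}}$ and using $w_i=m_{ii}$, I can factor out $\tilde D^{\mathrm{Lk}(T)}$ to obtain $J(P,s)^{\bullet}=\tilde D^{\mathrm{Lk}(T)}\prod_{i}K_i^{a_i-w_i}$.

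It then remains to show that $\prod_i K_i^{a_i-w_i}$ lies in $(\bar{U}_q^{ev0})^{\otimes n}$, i.e.\ that each $a_i-w_i$ is even, since $\bar{U}_q^{ev0}$ is generated over $\mathbb{Z}[q,q^{-1}]$ by $K^{2}$ and $K^{-2}$. This is the heart of the matter and is where Lemma~\ref{Cap} enters. Each copy of $\vec\cap$ or $\vec\cup$ on $T_i$ contributes $\pm 1$ to $a_i$, so $a_i\equiv u(P_i)\pmod 2$, where $u(P_i)$ is the number of such turnbacks on $T_i$. Viewing $T_i$ as a one-component bottom tangle of framing $w_i$ and applying Lemma~\ref{Cap} gives that $u(P_i)+w_i$ is even, whence $a_i\equiv u(P_i)\equiv w_i\pmod 2$ and $a_i-w_i\in 2\mathbb{Z}$. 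Therefore $K_i^{a_i-w_i}=(K^{2})_i^{(a_i-w_i)/2}\in\bar{U}_q^{ev0}$, which finishes the argument. I expect the only real obstacle to be the bookkeeping in the second step — correctly matching the signed crossing counts with the entries of $\mathrm{Lk}(T)$ and tracking the exponent $a_i$ of $K$ produced by the turnbacks — after which Lemma~\ref{Cap} converts the combinatorial parity statement into the desired membership in the even subalgebra.
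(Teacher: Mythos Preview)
Your argument is correct and follows essentially the same route as the paper: you write $J(P,s)^{\bullet}=D^{\mathrm{Lk}(T)}(K^{a_1}\otimes\cdots\otimes K^{a_n})$, factor out $\tilde D^{\mathrm{Lk}(T)}$ to leave $K_i^{a_i-m_{ii}}$, and then invoke Lemma~\ref{Cap} (applied to the one-component diagram $P_i$) to get $a_i\equiv u(P_i)\equiv m_{ii}\pmod 2$. The paper's proof is simply the compressed version of this, writing $\kappa_i$ for your $K^{a_i}$ and citing Lemma~\ref{Cap} for $K^{-m_{ii}}\kappa_i\in\bar U_q^{ev0}$.
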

\begin{proof}
For each $i=1,\ldots , n$, we denote by  $\kappa _i$  the product of  the $K^{\pm 1}$s on the copies of $\vec {\cap }$ and $\vec {\cup }$ of $P_i$. 
We have
\begin{align*}
J(P,s)^{\bullet }=&D^{\mathrm{Lk}(T)}(\kappa _1\otimes \cdots \otimes \kappa _{n})\\
=&\tilde D^{\mathrm{Lk}(T)} (K^{-m_{1,1}}\kappa _1\otimes \cdots \otimes K^{-m_{n,n}}\kappa _{n}).
\end{align*}
Since we have $K^{-m_{i,i}} \kappa _i\in \bar {U}_{q}^{ev0}$ by Lemma \ref{Cap}, the right hand side is contained in $\tilde D^{\mathrm{Lk}(T)}(\bar U_{q}^{ev0})^{\otimes n}$. This completes the proof.
\end{proof}

\begin{lemma}\label{dia}
For every  $i=1,\ldots,n,$
we have
\begin{align*}
J(P_i,s )^{\diamondsuit }\sim 1.
\end{align*}
\end{lemma}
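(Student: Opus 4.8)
The plan is to show that $J(P_i,s)^{\diamondsuit}$ is, up to the ignorable scalars $\pm q^{j}$, an even power of $K$, so that it is $\sim 1$. By construction $J(P_i,s)^{\diamondsuit}$ is the product of the white diamonds lying on $P_i$, and each white diamond is a power of $K$ produced exactly by the move of Figure~\ref{fig:D2}, i.e.\ by formula~(\ref{exD}): when a homogeneous white dot $x$ slides past a $D^{\pm1}$ dot at a crossing, a factor $K^{\pm|x|}$ is deposited on the opposite strand of that crossing. After the modification of Figures~\ref{fig:cross3}--\ref{fig:crosss2}, every white dot carries either $\tilde F^{(m)}$ (of degree $-m$) or $e^{m}$ (of degree $+m$), where $m=s(c)$ is the state of the crossing $c$ at which the dot was created. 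Hence every diamond on $P_i$ has exponent $\pm m$ for some crossing $c$ whose opposite branch lies on $P_i$, and the whole task reduces to showing that the total exponent $d_i$ of $J(P_i,s)^{\diamondsuit}$ is even.

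First I would make precise which diamonds are deposited on $P_i$. A diamond $K^{\pm|x|}$ lands on $P_i$ exactly when a white dot $x$, sliding along its own branch toward the start point, passes the $D$-dot of a crossing whose other branch belongs to $P_i$; this accounts both for the self-crossings of $P_i$, where both branches lie on $P_i$, and for the crossings of $P_i$ with the remaining components. The sign of each exponent is governed by whether the dot is $D$ or $D^{-1}$ (the sign of the crossing) and by the orientations of the two branches, the latter entering through the operation $S'$ of Figure~\ref{fig:S'}, which also affects the sign of the deposited $K$-exponent.

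The heart of the matter is then a parity computation. Since $\sim$ permits multiplication of a tensorand by $K^{2j}$, it suffices to prove $d_i\equiv 0 \pmod 2$, and modulo $2$ both the signs and the scalars $q^{j}$ disappear: a diamond born at a crossing of state $m$ contributes simply $m \bmod 2$. I would therefore count, modulo $2$, the odd-state diamonds on $P_i$, using crucially that $T_i$ is a single arc with both endpoints on the bottom line. The arc structure of a bottom-tangle component, together with the parity bookkeeping of the local maxima and minima $\vec\cap,\vec\cup$ already exploited in Lemma~\ref{Cap}, should force this count to be even. The phenomenon is visible in the worked example, where $J(P_1,s)^{\diamondsuit}\sim K^{-2k}$ and $J(P_2,s)^{\diamondsuit}\sim K^{-2m}$ are manifestly even, and the resulting factor of $2$ is of a piece with the doubling $D_{ij}^{2m_{ij}}$ of the off-diagonal entries of $\tilde D^{\mathrm{Lk}(T)}$.

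I expect the main obstacle to be precisely this orientation- and sign-bookkeeping: organizing, for each white dot, the set of $P_i$-crossings through which it slides, and verifying that the odd contributions cancel in pairs. Once the sliding is tracked consistently with the $S'$-rule and with the signs of the crossings, the evenness of $d_i$, and hence $J(P_i,s)^{\diamondsuit}\sim 1$, should follow as in the example.
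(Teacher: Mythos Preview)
Your reduction is correct and matches the paper: $J(P_i,s)^{\diamondsuit}=K^{d_i}$, where modulo $2$ one has $d_i\equiv\sum_{c\in C(P)}s(c)\bigl(p_i(E_c)+p_i(F_c)\bigr)$, with $p_i(E_c)$ (resp.\ $p_i(F_c)$) the number of times the white dot $E_c$ (resp.\ $F_c$) traverses $P_i$ while sliding to its start point. So the task is exactly to show that $p_i(E_c)+p_i(F_c)$ is even for every crossing $c$.

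The gap is that you do not actually prove this parity statement. Saying that the ``arc structure'' and Lemma~\ref{Cap} ``should force this count to be even'' is not an argument: Lemma~\ref{Cap} controls the parity of the number of $\vec\cap,\vec\cup$ on a single component relative to its framing, whereas here you must control, for each fixed crossing $c$, the parity of how many times the two dots $E_c$ and $F_c$ cross the curve $P_i$. These are different parities, and the latter does not follow from the former. In particular, your proposal to ``count odd-state diamonds on $P_i$'' and hope they cancel in pairs has no mechanism; nothing you have written distinguishes the actual situation from one in which a single odd-state diamond would survive.

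The missing idea, which the paper supplies, is a Jordan-curve/checkerboard argument. One colors the regions of the complement of $P_i$ in the rectangle in chessboard fashion (outermost region white), and correspondingly splits $P_i$ into two arc systems $W_i$ and $B_i$ according to which color lies on the left when walking toward the start point. The parity of $p_i(x)$ for a dot $x$ on $P_j$ with $j\neq i$ is then determined by the color of the region containing $x$, and for a dot on $P_i$ by whether it sits on $W_i$ or $B_i$. Since $E_c$ and $F_c$ sit at the \emph{same} crossing $c$, a short case analysis (self-crossing of $P_i$; crossing of $P_j$ with $P_l$ for $j,l\neq i$; crossing of $P_i$ with $P_j$) shows they always have the same parity, whence $p_i(E_c)+p_i(F_c)$ is even. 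Without this coloring device (or an equivalent mod~$2$ intersection-number argument) the proof does not close.
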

If we assume Lemma \ref{dia}, then Proposition \ref{WW} follows from
\begin{align*}
J(P,s)\sim J(P,s )^{\bullet }J(P,s )^{\diamondsuit }J(P,s)^{\circ }
\in \tilde  D^{\mathrm{Lk}(T)}(\bar U_q^{ev0})^{\otimes n}\cdot (U_{\mathbb{Z},q}^{ev})^{\otimes n}\subset
  \tilde  D^{\mathrm{Lk}(T)} (U_{\mathbb{Z},q}^{ev})^{\otimes n},
\end{align*}
 by  (\ref{noten}) and  Lemma \ref{bu}.
 
\begin{proof}[Proof of Lemma \ref{dia}.]
For a crossing $c$ of $(P,s)$,  we denote by $E_c$ (resp. $F_c$) the white dot 
on the  over (resp. under) strand labeled by $e^{s(c)}$ (resp. $\tilde F^{(s(c))}$). 
We slide  those  white dots  to the start points of strands of $P$, and count the  powers of $K$ labeled to the  white diamonds
on each strands.

Note that  each time we exchange  $E_c$ with one of the two dots connected by dashed line,  labeled by $D^{\pm 1}$,  a white diamond  labeled by $K^{\mp s(c)}$  appears  next to the other dot, see Figure \ref{fig:D2} again.
Similarly,  if we exchange   $F_c$ with one of the two  dots labeled by $D^{\pm 1}$, then  a white diamond  labeled by $K^{\pm s(c)}$  appears 
next to  the other dot. 

Let $p_i(E _c)$ denotes the number of times $E_c$ traverses the strand $P_i$
during the sliding process.
Define $p_i(F_c)$ similarly.
Then we have  $J(P_i,s )^{\diamondsuit }=K^{d_i}$, where
$$
d_i\equiv \sum_{c\in C(P)}s(c)(p_i(E _c)+p_i(F_c)) \ \  \pmod 2.
$$
Hence it is enough to prove that  $p_i(E _c)+p_i(F_c)$ is even for  each crossing $c$.
We prove the assertion with  three types of  crossings as follows.
\begin{itemize}
\item[(i)] Self crossings of $P_i$. 
  \item[(ii)] Crossings of $P_j$ with $P_l$ for  $j\neq i, l\neq i$.
\item[ (iii)] Crossings  of $P_i$ with  $P_j$ for  $j\neq i$.
\end{itemize}

Color black or white, in chessboard fashion, the regions of the complements of $P_i$ in the rectangle so that
the outermost region is colored  white. For example, see Figure \ref{fig:LLink3}.
\begin{figure}
\centering
\includegraphics[width=8cm,clip]{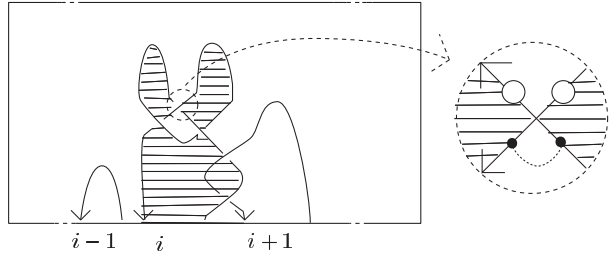}
\caption{A  diagram $P=P_1\cup \cdots \cup P_n$ colored by chessboard fashion  associated to $P_i$.
We depict only the $(i-1)$, $i$, and $(i+1)$th component. }\label{fig:LLink3}
\end{figure}%
Divide the strand $P_i$ into two parts $B_i$ and $W_i$, each consisting of segments bounded by 
self crossing points or the boundary points of $P_i,$ such that if one goes along
a segment in $W_i$ (resp. $B_i$) to the start point of $P_i,$ then one sees a white (resp. black) region on the left.

Note that the boundary  points of the strand   $P_l$,  $i\neq l$, are contained in the white region, and
those  of $P_i$ are contained  in $W_i$.
\begin{itemize}
\item[(i)]For a  self crossing $c$ of $P_i$.

Note that when we trace along $P_i$ from the end point to the start point, every time we traverse the self crossing of $P_i$, $B_P$ and $W_P$  appear one after the other.
For every self crossing $c\in P_i$, both $E _c$ and $F _c$ are  either in $B_P$ or in $W_P$.
Hence if we slide  $E _c$ and $F _{c}$ to the start point, then the parities of $p_i(E _c)$ and  $p_i(F _c)$ are the same.
Thus, $p_i(E _c)+p_i(F_c)$ is even.
\item[(ii)]For a  crossing $c$ of $P_j$ and $P_l$ with $j\neq i, l\neq i$.

If the crossing $c$ is in the white region, then both $p_i(E _c)$ and $p_i(F _{c})$ are  even.
If $c$ is in the black region, then  both $p_i(E _c)$ and $p_i(F _c)$ are  odd.
Hence $p_i(E _c)+p_i(F_c)$ is even in both cases.

\item[(iii)]For a crossing $c$ of $P_i$ and $P_j$ with $j\neq i$.

See Figure \ref{fig:cases}.
There are  four types of crossings such that  whether the white dot on $P_i$ is in $W_i$ or in $B_i$, and 
whether the white dot on $P_j$ is in the  white region or in the black region.
\begin{figure}
\centering
\includegraphics[width=10cm,clip]{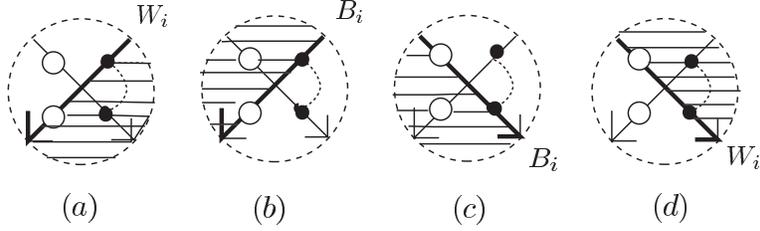}
\caption{The four types of crossings.}\label{fig:cases}

\end{figure}%
We assume $P_i$ is the over strand, i.e., $E_c$ is attached on $P_i$.
The other case  is almost the same.
For $(a)$,  since $E_c $ starts and ends in $W_i$, $p_i(E _c)$ is even.
Similarly,   since $F_c$ starts and ends in the white region, $p_i(F_c)$ is even.
Thus, $p_i(E _c)+p_i(F_c)$ is even.
 For the other three cases, in a similar way, we can prove that the parities of $p_i(E _c)$ and $p_i(F _c)$ are the same. 
 Hence $p_i(E_c)+p_i(F _c)$ is even.
\end{itemize}
Therefore we have $J(P_i,s)^{\diamondsuit } \sim 1$ for $i=1,\ldots, n$, this completes the proof.
\end{proof}
\begin{remark}
As defined in \cite{H2}, let  $\mathcal{U}_q^{ev}\subset U_{\mathbb{Z},q}$ denote the subalgebra  of $U_h$ freely generated over $\mathbb{Z}[q,q^{-1}]$  by the 
elements $\tilde F^{(i)}K^{2j}e^k$ for $i,k\geq 0, j\in \mathbb{Z}$.
Note that the right hand sides of (\ref{rem1}) and (\ref{rem2}) are in $(\mathcal{U}_q^{ev})^{\otimes 2}$.
This  implies  a result stronger  than Proposition \ref{WW} ;
\begin{align*}
J(P,s)\in \tilde D^{\mathrm{Lk}(T)}(\mathcal{U}_q^{ev})^{\otimes n}.
\end{align*}
This implies the following, which is proved by Habiro when $\mathrm{Lk}(T)=0$ in the other way.
\begin{align*}
J_T\in \tilde D^{\mathrm{Lk}(T)}(\tilde{ \mathcal{U}}_q^{ev})^{\tilde \otimes n},
\end{align*}
where $(\tilde {\mathcal{U}_q}^{ev})^{\tilde \otimes n}$ is the Habiro's  completion of $(\mathcal{U}_q^{ev})^{\otimes n}$ in \cite{H2}.
\end{remark}

\subsection{The universal $sl_2$ invariant of ribbon bottom tangles.}\label{ribbon}
Habiro \cite{H2} studied  the universal   $sl_2$ invariant of  $1$-component ribbon bottom tangles.
We  generalize those  to  $n$-component ribbon bottom tangles for $n\geq 1$. 

For $T\in BT_{i+j+2}$, $i,j\geq 0$,  let $(\ad_b)_{i,j}(T)\in BT_{i+j+1}$ and $(\mu_b)_{(i,j)}(T)\in BT_{i+j+1}$  denote the bottom tangles as depicted in Figure \ref{fig:ad2}. We use  the following lemma.

\begin{figure}
\centering
\includegraphics[width=9cm,clip]{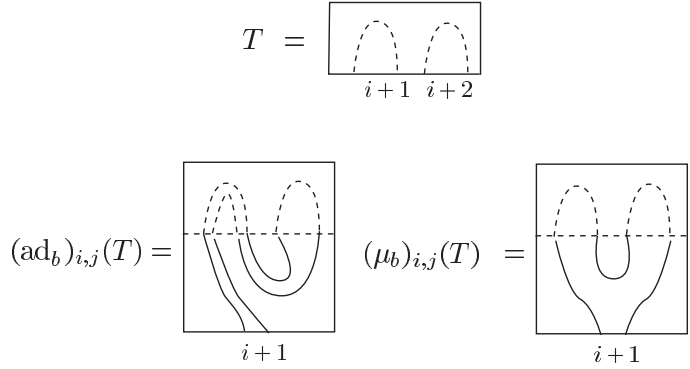}
\caption{A bottom tangle $T\in BT_{i+j+2}$ and the bottom tangles  $(\ad_b)_{(i,j)}(T)$, $(\mu_b)_{(i,j)}(T)\in BT_{i+j+1}$.
We  depict only the $(i+1), (i+2)$th components of $T$, and the $(i+1)$th components of  $(\ad_b)_{(i,j)}(T)$, $(\mu_b)_{(i,j)}(T)$.}\label{fig:ad2}
\end{figure}
\begin{lemma}[Habiro \cite{H1}]\label{adj}
For every bottom tangle $T\in BT_{i+j+2}$,  $i,j\geq 0$, we have
\begin{align*}
&J_{(\ad_b)_{i,j}(T)}=\ad_{i,j}(J_T),
\\
&J_{ (\mu _b)_{i,j}(T)}=\mu_{i,j}(J_T),
\end{align*}
where we set
\begin{align*}
\ad_{i,j}=\id^{\otimes i}\otimes \ad\otimes \id^{\otimes j}\colon\ U_h^{\hat \otimes i+j+2}\rightarrow U_h^{\hat \otimes i+j+1},
\\
\mu_{i,j}=\id^{\otimes i}\otimes \mu\otimes \id^{\otimes j}\colon\ U_h^{\hat \otimes i+j+2}\rightarrow U_h^{\hat \otimes i+j+1}.
\end{align*}
Here $\mu\colon\ U_h\hat \otimes U_h\rightarrow U_h $ is the multiplication of $U_h$.
\end{lemma}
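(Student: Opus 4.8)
The plan is to prove both identities by first reducing to the case $i=j=0$ and then computing the universal invariant directly from its state-sum definition. For the reduction, note that the algebraic maps $\ad_{i,j}$ and $\mu_{i,j}$ are both of the form $\id^{\otimes i}\otimes(-)\otimes\id^{\otimes j}$, and that the topological operations $(\ad_b)_{i,j}$ and $(\mu_b)_{i,j}$ alter only the $(i+1)$th and $(i+2)$th components of $T$, leaving the remaining components as spectator strands that run parallel to, and unlinked from, the small ball in which the modification is performed. Choosing a diagram $P$ of $T$ in which these two components have adjacent bottom endpoints and the modification takes place disjointly from the other strands, the $m$th tensorand of $J(P,s)$ is unchanged for $m\neq i+1,i+2$, while the $(i+1)$th and $(i+2)$th tensorands combine exactly as in the two-component case. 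Hence it suffices to take $T\in BT_2$ and prove $J_{\mu_b(T)}=\mu(J_T)$ and $J_{\ad_b(T)}=\ad(J_T)$.

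For the multiplication, observe that $\mu_b$ is realized by joining, below the bottom line, the end point of the first component to the start point of the second, so that the result is a single arc whose start and end points are those of the merged component. This gluing introduces no new crossings, so for every diagram $P$ and every state $s$ the decorated diagram for $\mu_b(T)$ is obtained from $(P,s)$ simply by concatenating the two strands. By the rule defining $J$, the word read off along the merged component is the product of the words along the two original components, taken in the order dictated by the orientations; this is exactly $\mu$ applied to the first two tensorands of $J(P,s)$. Summing over $s\in\mathcal{S}(P)$ yields $J_{\mu_b(T)}=\mu(J_T)$.

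For the adjoint action, the operation $\ad_b$ routes the component carrying the element $a$ occupying the first tensorand so that it encircles the component carrying $b$ before the two are merged into one. Reading off along the resulting single component, the encircling strand passes the other component on both sides: the two passages split $a$ according to the coproduct $\Delta(a)=\sum a'\otimes a''$ of $U_h$, while the return passage, being traversed against the orientation, contributes the antipode via the $S'$-rule of Figure \ref{fig:S'}. Reading from end point to start point therefore produces $\sum a' b S(a'')$, which by the definition $\ad(a\otimes b)=\sum a' b S(a'')$ is precisely $\ad$ of the relevant tensorands; summing over states gives $J_{\ad_b(T)}=\ad(J_T)$. I expect this adjoint case to be the main obstacle: one must check, using the reading-off conventions of Figures \ref{fig:cross} and \ref{fig:S'} together with the coproduct and antipode of $U_h$, that the encircling geometry produces exactly the coproduct $\Delta$ (rather than a twisted or opposite comultiplication) and that the orientation-reversed return strand contributes $S(a'')$ in the correct position, so that the three factors appear in the order $a'\,b\,S(a'')$. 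The invariance of $J$ under the choice of diagram then guarantees that the particular diagram used to display the encircling does not affect the result.
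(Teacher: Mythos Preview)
The paper does not prove this lemma; it is quoted from \cite{H1}, where it is a consequence of the functoriality of the universal invariant with respect to composition of tangles, together with an explicit identification of the operators induced by the elementary tangles realizing $\mu_b$ and $\ad_b$. Your reduction to $i=j=0$ and your argument for $\mu_b$ are correct: the concatenation adds no new crossings or decorated extrema, so reading off the merged strand gives the product of the two words, which is exactly $\mu$.

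For $\ad_b$ your heuristic that ``the two passages split $a$ according to the coproduct'' is not a proof and is somewhat misleading as stated. The element $a$ already sits as the first tensorand of $J_T$, produced by reading off decorations along the first component of $T$; nothing in the state-sum rules of Section~\ref{bottom inv} ``applies $\Delta$'' to a pre-existing element. What the $\ad_b$ pattern actually does is append specific new crossings and extrema below $T$, and the identity $J_{\ad_b(T)}=\ad(J_T)$ amounts to the assertion that the universal invariant of that local $2$-input, $1$-output tangle, regarded as a linear map $U_h^{\hat\otimes 2}\to U_h$, equals $a\otimes b\mapsto\sum a'bS(a'')$. Establishing this requires the ribbon Hopf algebra axioms in their $R$-matrix form---quasi-triangularity $(\Delta\otimes 1)R=R_{13}R_{23}$, the antipode relation $(S\otimes 1)R=R^{-1}$, and the compatibility of $R$ with the grouplike element $K$ placed on the extrema---together with the multiplicativity of $J$ under vertical stacking of tangle diagrams. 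Your final paragraph correctly flags this as the crux, but the sketch does not carry out the computation; as written, the adjoint case is a plausibility argument rather than a proof.
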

For a $2k$-component bottom tangle $W=W_1\cup \cdots \cup W_{2k}\in BT_{2k}, k\geq 0$, set 
\begin{align*}
W^{ev}=\bigcup _{i=1}^kW_{2i} \in BT_{k},\ \ \text{and} \ \ 
W^{odd}=\bigcup _{i=1}^kW_{2i-1}\in BT_{k}.
\end{align*}
For a diagram $P$ of $W$,  let $P^{ev}$ (resp. $P^{odd}$) denote the part of the diagram $P$ corresponding to $W^{ev}$
(resp. $W^{odd}$).
We say  a bottom tangle $W\in BT_{2k}$ is \textit{even-trivial} if $W^{ev}$ is a trivial bottom tangle. 
For example, see Figure \ref{fig:add1}.
We also say  a diagram $P$ of $W$ is \textit{even-trivial}  if and only if $P^{ev}$ has no self crossings.
Note that a bottom tangle $W$ has an even-trivial diagram  if and only if $W$ is even-trivial.
\begin{figure}

\centering
\includegraphics[width=8cm,clip]{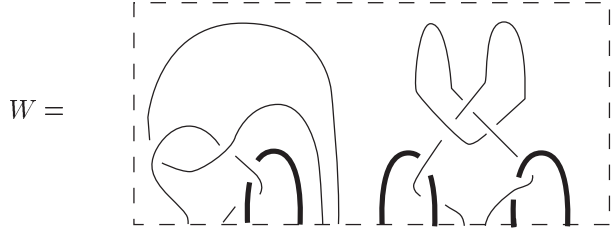}
\caption{An even-trivial bottom tangle $W\in BT_6$.
Here $W^{ev}$ is depicted with thick lines.}\label{fig:add1}
\end{figure}

The following lemma is almost the same as \cite[Theorem 11.5]{H1}.
\begin{proposition}\label{r1}
For any bottom tangle $T\in BT_n$, the following conditions are equivalent.
\begin{itemize}
\item[(1)]
$T$ is a ribbon bottom tangle.
\item[(2)]
There is  an even-trivial bottom tangle  $W\in BT_{2k}, k\geq 0$, and there are integers $ N_1,\ldots , N_n\geq 0$ satisfying $N_1+\cdots+N_n=k$,  such that
\begin{align}
T=\mu _b^{[N_1,\ldots,N_n]}\ad_b^{\otimes k}(W),\label{data}
\end{align}
\end{itemize}
where 
$$
\ad_b^{\otimes k}\colon\;BT_{2k}\rightarrow BT_{k}
$$
is as depicted in Figure \ref{fig:adjoint}, and 
$$
 \mu _b^{[N_1,\ldots,N_n]}\colon\;BT_{N_1+\cdots +N_n}\rightarrow BT_n
$$
is as depicted in Figure \ref{fig:mu}.
\end{proposition}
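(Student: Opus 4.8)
The statement is purely topological, so the plan is to prove it as a statement about tangles and to follow closely the proof of \cite[Theorem 11.5]{H1}, the only genuinely new ingredient being the ribbon-disk characterization of Proposition \ref{rs}. Both implications are geometric: the operations $\ad_b$ and $\mu_b$ of Figure \ref{fig:ad2} are realized by explicit band surgeries, so the content of the proposition is a dictionary between the algebraic form (\ref{data}) and a normal form for systems of ribbon disks.

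For $(2)\Rightarrow(1)$, I would start from an even-trivial $W\in BT_{2k}$ and build a system of ribbon disks for $T=\mu_b^{[N_1,\ldots,N_n]}\ad_b^{\otimes k}(W)$ by hand. Because $W^{ev}$ is trivial, its $k$ components bound mutually disjoint embedded disks, which I take as the ``cores''; the $k$ odd components of $W$ play the role of ``bands''. The adjoint move $\ad_b$ conjugates each band by the adjacent core, which geometrically pushes that band once through the corresponding core disk, creating a single ribbon singularity and no clasp. The fusion move $\mu_b$ only band-sums the resulting disks along the bottom and creates no new singularity. Tracking the disks through the composite $\mu_b^{[N_1,\ldots,N_n]}\ad_b^{\otimes k}$ then yields an immersed surface with ribbon singularities only, bounded by the link $\tilde T$; hence $T$ admits a system of ribbon disks and is a ribbon bottom tangle by Proposition \ref{rs}.

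For the converse $(1)\Rightarrow(2)$, I would take a system of ribbon disks $D_1\cup\cdots\cup D_n$ for $T$, which exists by Proposition \ref{rs}, and put it into a normal form. Every ribbon surface is isotopic to a union of disjoint flat embedded disks joined by embedded bands, each band meeting the disks only in ribbon singularities; I would use this to present each $D_i$ as a band-sum of $N_i$ elementary pieces, where each piece is a single band passing once through one trivial core disk, and $k=\sum_i N_i$. The cores, being flat and disjoint, assemble into a trivial bottom tangle playing the role of $W^{ev}$, the bands assemble into $W^{odd}$, the way each band pierces its core recovers the factor $\ad_b$, and the grouping of the $N_i$ bands along the $i$th component recovers $\mu_b^{[N_1,\ldots,N_n]}$. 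This exhibits $T$ in the form (\ref{data}).

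The hard part will be this last normalization. An arbitrary system of ribbon disks is not handed to us as disjoint trivial cores pierced by single-intersection bands: a band may thread several disks, or one disk several times, and the disks need not be unknotted or unlinked a priori. The real work is therefore to isotope and re-cut the surface---splitting bands at intermediate ribbon singularities into several elementary pieces, and absorbing inessential intersections---until each ribbon singularity corresponds to exactly one $\ad_b$ factor and each core is trivial. This is precisely the mechanism behind \cite[Theorem 11.5]{H1}, so I expect the proof to reduce to verifying that Habiro's argument applies once his hypothesis is replaced by ``ribbon'', with the bottom-tangle bookkeeping handled as in Proposition \ref{rs}; the compatibility with the universal invariant needed later is then recorded separately in Lemma \ref{adj}.
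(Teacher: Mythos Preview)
Your proposal is correct and follows exactly the approach the paper takes: the paper's own proof is the single sentence ``In view of Proposition \ref{rs}, the proof is almost the same as that of Theorem 11.5 in \cite{H1}.'' Your write-up is in fact more detailed than the paper's, but the strategy---reduce to Habiro's Theorem 11.5 via the ribbon-disk characterization of Proposition \ref{rs}---is identical.
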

If (\ref{data}) holds, then we call $(W; N_1,\ldots , N_n)$ a \textit{ribbon data} for $T$. 
For example, the ribbon bottom tangle $\mu ^{[ 1,2,0]}(ad_b)^{\otimes 3}(W)\in BT_3$ with the ribbon data 
$(W\in BT_3; 1,2,0)$, where $W$ is the bottom tangle  in Figure \ref{fig:add1}, is as  depicted in Figure \ref{fig:add2}.

\begin{proof}[Proof of Proposition \ref{r1}]
In view of Proposition \ref{rs}, the proof is almost the same as  that of  Theorem 11.5 in \cite{H1}.
\end{proof}
\begin{figure}

\centering
\includegraphics[width=10cm,clip]{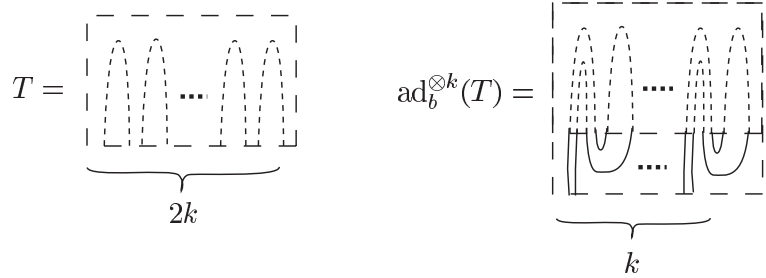}
\caption{A bottom tangle $T\in BT_{2k}$ and the bottom tangle $\ad_b^{\otimes k}(T)\in BT_{k}$.}\label{fig:adjoint}
\end{figure}\begin{figure}

\centering
\includegraphics[width=11cm,clip]{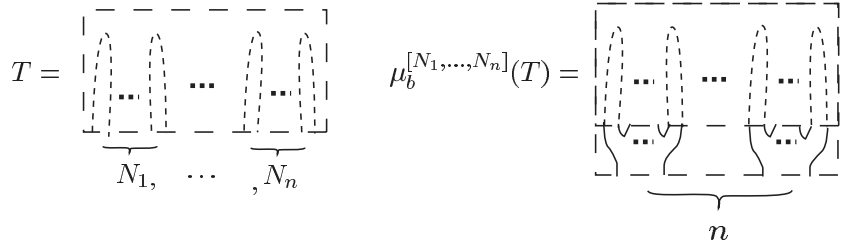}
\caption{A bottom tangle $T\in BT_{k}$ and the bottom tangle $\mu_b^{[N_1,\ldots,N_n]}(T)\in BT_{n}$. }\label{fig:mu}
\end{figure}
\begin{figure}

\centering
\includegraphics[width=10cm,clip]{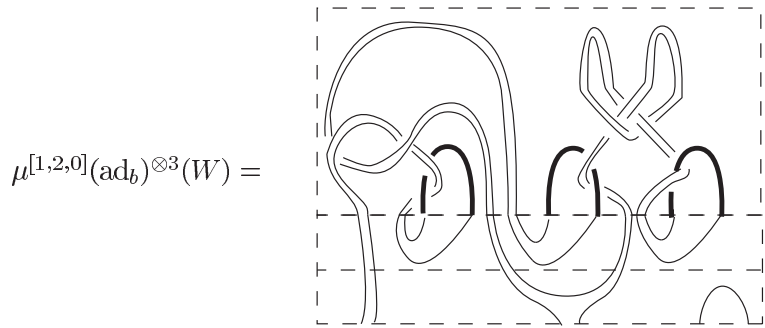}
\caption{The ribbon bottom  tangle $\mu ^{[ 1,2,0]}(ad_b)^{\otimes 3}(W)\in BT_3$ for the even-trivial bottom tangle $W\in BT_3$ in Figure \ref{fig:add1}.}\label{fig:add2}
\end{figure}

For $n\geq 1$, let  $$\mu  ^{[ n]}\colon\;U_h^{\hat \otimes n}\rightarrow U_h, \ \ x_1\otimes \cdots \otimes x_n\mapsto x_1x_2\cdots x_n$$ 
denote the $n$-input multiplication. 
For integers $ N_1,\ldots , N_n\geq 0$, $ N_1+\cdots+N_n=k$, set
\begin{align*}
\mu ^{[N_1,\ldots,N_n]}=\mu ^{[N_1]}\otimes \cdots \otimes \mu ^{[N_n]} \colon\; U_h^{\hat \otimes k}\rightarrow U_h^{\hat \otimes n}.
\end{align*}
\begin{proposition}\label{rib}
Let $T\in BT_n$ be a ribbon bottom tangle and  $(W\in BT_{2k}; N_1,\ldots,N_n) $  a ribbon data for $T$.
 Then we have
 \begin{align*}
 J_T=\mu ^{[N_1,\ldots,N_n]}\ad^{\otimes k}(J_W).
 \end{align*}
\end{proposition}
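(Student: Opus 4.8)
The plan is to reduce the statement entirely to Lemma \ref{adj} by writing both of the tangle operations $\ad_b^{\otimes k}$ and $\mu_b^{[N_1,\ldots,N_n]}$ as compositions of the elementary operations $(\ad_b)_{i,j}$ and $(\mu_b)_{i,j}$, and then observing that under each such composition the universal invariant transforms by exactly the corresponding composition of $\ad_{i,j}$ and $\mu_{i,j}$. Since the factorization $T=\mu_b^{[N_1,\ldots,N_n]}\ad_b^{\otimes k}(W)$ supplied by the ribbon data (Proposition \ref{r1}) performs all the adjoint operations before all the multiplications, I would handle the two halves separately and then compose.

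First I would treat the adjoint part. Reading off Figure \ref{fig:adjoint}, the operation $\ad_b^{\otimes k}$ merges the consecutive pairs of components $(1,2),(3,4),\ldots,(2k-1,2k)$ of $W$, and hence factors as
$$
\ad_b^{\otimes k}=(\ad_b)_{k-1,0}\circ (\ad_b)_{k-2,2}\circ\cdots\circ (\ad_b)_{0,2k-2},
$$
where the $m$-th factor $(\ad_b)_{m-1,2k-2m}$ merges the original $(2m-1)$-th and $(2m)$-th components, which occupy positions $m,m+1$ after the previous $m-1$ merges. Applying Lemma \ref{adj} once for each factor and inducting on $m$, each $(\ad_b)_{m-1,2k-2m}$ transforms $J$ by the matching $\ad_{m-1,2k-2m}$, so that
$$
J_{\ad_b^{\otimes k}(W)}=\ad^{\otimes k}(J_W),
$$
since the same composition on the algebra side collapses to $\ad^{\otimes k}=\ad\otimes\cdots\otimes\ad$.

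Next I would treat the multiplication part in the same spirit. For any $S\in BT_k$, Figure \ref{fig:mu} shows that $\mu_b^{[N_1,\ldots,N_n]}$ partitions the $k$ components into $n$ consecutive blocks of sizes $N_1,\ldots,N_n$ and merges each block into a single component: a block of size $N_i\geq 1$ is merged by $N_i-1$ applications of the binary operation $(\mu_b)_{i,j}$, while a block of size $N_i=0$ inserts a trivial component $\cap$, whose universal invariant is $1$. Thus $\mu_b^{[N_1,\ldots,N_n]}$ is again a composition of elementary operations, and by Lemma \ref{adj} each binary merge transforms $J$ by the matching $\mu_{i,j}$, while each trivial component contributes the unit $1$ on both sides. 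As the corresponding composition on the algebra side is precisely $\mu^{[N_1,\ldots,N_n]}=\mu^{[N_1]}\otimes\cdots\otimes\mu^{[N_n]}$, with $\mu^{[0]}$ the unit insertion, I obtain $J_{\mu_b^{[N_1,\ldots,N_n]}(S)}=\mu^{[N_1,\ldots,N_n]}(J_S)$ for every $S\in BT_k$.

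Finally, taking $S=\ad_b^{\otimes k}(W)$ and combining the two displays gives
$$
J_T=\mu^{[N_1,\ldots,N_n]}\big(J_{\ad_b^{\otimes k}(W)}\big)=\mu^{[N_1,\ldots,N_n]}\ad^{\otimes k}(J_W),
$$
as claimed. This proposition is essentially a formal consequence of the elementary case Lemma \ref{adj} together with the definitions, so the hard part is not conceptual but bookkeeping: I expect the only real care to be needed in verifying index-for-index that the geometric compositions match the algebraic ones, and in checking the minor edge case that the $N_i=0$ blocks correspond to the unit insertions $\mu^{[0]}$. Both are routine once Lemma \ref{adj} is in hand.
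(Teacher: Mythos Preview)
Your proposal is correct and takes essentially the same approach as the paper: both reduce the statement to Lemma \ref{adj}, deriving first $J_{\ad_b^{\otimes k}(W)}=\ad^{\otimes k}(J_W)$ and then $J_{\mu_b^{[N_1,\ldots,N_n]}(S)}=\mu^{[N_1,\ldots,N_n]}(J_S)$ and composing. The paper's proof is a two-line sketch that simply asserts these two identities as consequences of Lemma \ref{adj}; you have merely made explicit the factorization into the elementary operations $(\ad_b)_{i,j}$, $(\mu_b)_{i,j}$ and the unit insertion for the $N_i=0$ case.
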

\begin{proof}
By  Lemma \ref{adj}, we have 
\begin{align*}
J_{\ad_b^{\otimes k}(T)}&=\ad^{\otimes k}(J_T),
\end{align*}
for $T\in BT_{2k}$, and 
\begin{align*}
J_{\mu_b ^{[N_1,\ldots,N_n]}(T)}&=\mu ^{[N_1,\ldots,N_n]}(J_T),
\end{align*}
for $T\in BT_k$. This implies the assertion.
\end{proof}
\section{Proof of Theorem \ref{1}.}\label{proof}
In this  section, we prove  Theorems \ref{1}.
Let $T\in BT_n$, $n\geq 0$, be a ribbon bottom tangle, and $(W\in BT_{2k}; N_1,\ldots ,N_n)$, $k\geq 0$,  a ribbon data for $T$.
Let $P_W$ be an even-trivial diagram of  $W$,  and $s\in \mathcal{S}(P_W)$  a state.
We use this setting  throughout this section.
The  proof of Theorem \ref{1} is outlined as follows.

First,  we prove the following proposition.
\begin{proposition} \label{W}
We have
\begin{align*}
J(P_W,s) \in \tilde D^{\mathrm{Lk}(T)}(U_{\mathbb{Z}, q}^{ev}\otimes \bar {U}_q^{ev})^{\otimes k}.
\end{align*}
\end{proposition}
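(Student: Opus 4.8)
The plan is to exploit the special structure of an even-trivial diagram $P_W$, in which the odd-indexed strands $P_W^{odd}$ carry all the crossing complexity while the even-indexed strands $P_W^{ev}$ are embedded trivially (no self crossings). The key observation is that Proposition \ref{WW} already hands us the membership $J(P_W,s)\in\tilde D^{\mathrm{Lk}(W)}(U_{\mathbb{Z},q}^{ev})^{\otimes 2k}$, so the real content is to \emph{upgrade} the even-indexed tensorands from $U_{\mathbb{Z},q}^{ev}$ to the smaller subalgebra $\bar U_q^{ev}$, and then to reconcile the linking matrix of $W$ with that of $T$. First I would recall the three-diagram decomposition $J(P_W,s)\sim J(P_W,s)^{\bullet}J(P_W,s)^{\diamondsuit}J(P_W,s)^{\circ}$ from the proof of Proposition \ref{WW}, together with Lemma \ref{dia} which kills the diamond contribution, so that up to $\sim$ the invariant factors as the $D$-part $J(P_W,s)^{\bullet}\in\tilde D^{\mathrm{Lk}(W)}(\bar U_q^{ev0})^{\otimes 2k}$ times the white-dot part $J(P_W,s)^{\circ}\in(U_{\mathbb{Z},q}^{ev})^{\otimes 2k}$.

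The heart of the argument is analyzing $J(P_W,s)^{\circ}$ on the even strands. Because $P_W^{ev}$ has no self crossings, each even strand $P_{W,2i}$ meets the diagram only through crossings with odd strands, and at every such crossing the white dot deposited on the even strand is one of the two tensor factors of an $R$-matrix term, which after the modification process (see (\ref{rem1}), (\ref{rem2})) reads as $\tilde F^{(m)}$ or $e^m$. The point is that the absence of self crossings on $P^{ev}$ means the $2i$th tensorand of $J(P_W,s)^{\circ}$ is an \emph{ordered product} of factors each equal to a single $e^m$ or a single $\tilde F^{(m)}$ (never an interleaving forced by a local $E$–$F$ collision on that strand), and such monomials, after absorbing powers of $K^2$ and scalars $\pm q^j$ that we work modulo, lie in $\bar U_q^{ev}$ rather than merely in $U_{\mathbb{Z},q}^{ev}$. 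Concretely I would show that $e^m\in\bar U_q^{ev}$ up to a unit (since $e=v^{-1}(q-1)E$ and the normalization absorbs into $\bar U_q$) and likewise that the relevant powers of $\tilde F$ appearing on a crossing-only strand reduce into $\bar U_q^{ev}$; Lemma \ref{F2} then guarantees these products stay inside $\bar U_q^{ev}$. This gives $J(P_W,s)^{\circ}\in(U_{\mathbb{Z},q}^{ev}\otimes\bar U_q^{ev})^{\otimes k}$, matching the alternating pattern in the claimed target.

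The remaining step is bookkeeping on the $D$-prefactor: I must replace $\tilde D^{\mathrm{Lk}(W)}$ by $\tilde D^{\mathrm{Lk}(T)}$. Since $T$ is obtained from $W$ by $\ad_b^{\otimes k}$ followed by $\mu_b^{[N_1,\ldots,N_n]}$, the linking matrix of $T$ is determined by that of $W$ through the induced pairing on the surviving components, and the $\bar U_q^{ev0}$-valued even-strand diagonal contributions $K^{\pm m_{ii}}\kappa_i$ from Lemma \ref{bu} can be reabsorbed into $\tilde D^{\mathrm{Lk}(T)}$ when the adjoint and multiplication operations are later applied. I expect the main obstacle to be precisely this last reconciliation: verifying that the $D$-factors attached to the even strands (which will be adjoint-acted upon in the proof of Theorem \ref{1}) combine correctly with the linking matrix of the \emph{quotient} tangle $T$ rather than of $W$, and that no stray powers of $K$ on the odd strands obstruct membership. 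The even-strand $\bar U_q^{ev}$ claim, by contrast, should be a clean consequence of the no-self-crossing hypothesis together with Lemmas \ref{F2} and \ref{dia}.
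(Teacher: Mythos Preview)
Your decomposition strategy is the same as the paper's, but the central step contains a genuine error. You claim that on an even strand the white-dot contribution is a product of factors of the form $e^m$ or $\tilde F^{(m)}$, and that such a product lies in $\bar U_q^{ev}$. This is false: $\tilde F^{(m)}\notin \bar U_q^{ev}$ for $m\geq 2$, since $\tilde F^{(m)}=q^{\frac{1}{2}m(m-1)}f^m/\{m\}_q!$ and $\{m\}_q!$ is not a unit in $\mathbb{Z}[q,q^{-1}]$. The ``no self crossings on $P_W^{ev}$'' hypothesis does nothing to rescue this --- a single even--odd crossing at which the even strand is the under-strand already deposits a $\tilde F^{(m)}$ there, and no absence of interleaving changes that. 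Your remark about ``$E$--$F$ collisions'' is a red herring.

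What the paper actually uses is a redistribution trick across the tensor product. From (\ref{rem1})--(\ref{rem2}) one has, up to $\sim$, that the white-dot contribution at a crossing is $\tilde F^{(m)}\otimes e^m$. Since $e^m=\{m\}_q!\,\tilde E^{(m)}$ and $\{m\}_q!\,\tilde F^{(m)}\sim f^m$, the \emph{same} tensor can equally be written $\sim f^m\otimes \tilde E^{(m)}$. Thus the crossing contribution lies in $(U_{\mathbb{Z},q}^{ev}\otimes \bar U_q^{ev})\cap(\bar U_q^{ev}\otimes U_{\mathbb{Z},q}^{ev})$, and at each crossing one is free to choose which strand receives the $\bar U_q^{ev}$ factor. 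Because $P_W$ is even-trivial, every crossing has at least one odd strand; one then places the $\bar U_q^{ev}$ factor on the even strand (when present) and the $U_{\mathbb{Z},q}^{ev}$ factor on the odd strand, yielding $J(P_W,s)^{\circ}\in (U_{\mathbb{Z},q}^{ev}\otimes \bar U_q^{ev})^{\otimes k}$ directly. The rest is as in Proposition~\ref{WW}.

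Finally, your last paragraph chases a phantom. The statement's ``$\mathrm{Lk}(T)$'' is a typo for $\mathrm{Lk}(W)$: note that $J(P_W,s)\in U_h^{\hat\otimes 2k}$ while $\tilde D^{\mathrm{Lk}(T)}\in U_h^{\hat\otimes n}$, so the statement does not even type-check as written; moreover the paper immediately afterward uses $X=(\tilde D^{\mathrm{Lk}(W)})^{-1}J(P_W,s)$. No reconciliation of linking matrices is needed for this proposition; that passage happens later via Lemma~\ref{DD}.
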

Then we consider the contribution of  $\tilde D^{\mathrm{Lk}(T)}$ to   the adjoint action, and we construct
an element $\tilde J(P_W,s)\in (U_{\mathbb{Z}, q}^{ev}\otimes \bar {U}_q^{ev})^{\otimes k}$ such that
\begin{align}
\ad^{\otimes k}(J(P_W,s))=\ad^{\otimes k}(\tilde J(P_W,s)).\label{J}
\end{align}
Thus, by Proposition \ref{Habi}, we have 
\begin{align}
\ad^{\otimes k}(J(P_W,s))\in (\bar {U}_q^{ev})^{\otimes k}.
\end{align}
Finally, we  define a  completion $(\bar U_q^{ev})\;\hat  {}^{\;\hat  \otimes k}$ of $(\bar U_q^{ev})^{  \otimes k}$ and  prove Theorem \ref{1}, i.e., we prove
\begin{align*}
J_T=\mu ^{[N_1,\ldots ,N_n]}\sum_{s\in \mathcal{S}(P_W)}\ad^{\otimes k}(J(P_W,s))\in (\bar U_q^{ev})\;\hat  {}^{\;\hat  \otimes n}.
\end{align*}
\subsection{Proof of  Proposition \ref{W}.}
We modify the proof of Proposition \ref{WW}.
The key to the proof is the fact 
\begin{align*}
K^{m}R^+_{m[1]}\otimes  R^+_{m[2]} ,  R^-_{m[1]}\otimes K^mR^-_{m[2]}\in (U_{\mathbb{Z},q}^{ev}\otimes \bar U_q^{ev})\cap (\bar U_q^{ev} \otimes U_{\mathbb{Z},q}^{ev}),
\end{align*}
which follows  from (\ref{rem1}) and (\ref{rem2}).
Since $P_W$ is even-trivial, 
the set $C(P_W)$  of the crossings of $P_W$ is the disjoint union of two  subsets 
\begin{center}
$C^{eo}=\{$ crossings of $P_W^{ev}$ with $P_W^{odd}\}$,
\end{center}
 and
\begin{center} $C^{oo}=\{$ crossings of $P_W^{odd}$ with $P_W^{odd}\}$.    \end{center}
Thus, on the decorated diagram $(P_W,s)$, we can assume that the element attached to the  white    dot on   $P_W^{ev}$ (resp.  $P_W^{odd}$) is contained in $\bar U_q^{ev}$ (resp. $U_{\mathbb{Z},q}^{ev}$).
 For example, we attach  elements  to positive crossings as depicted in Figure \ref{fig:Pev}. 
 Then for the decorated diagram $(P_W,s)^{\circ }$, we have 
 \begin{align}
J(P_W,s)^{\circ }\in (U_{\mathbb{Z}, q}^{ev}\otimes \bar {U}_q^{ev})^{\otimes k}.\label{note}
 \end{align}
 The rest is analogous to  the  proof of  Proposition \ref{WW}. 
 \begin{figure}
\centering
\includegraphics[width=13cm,clip]{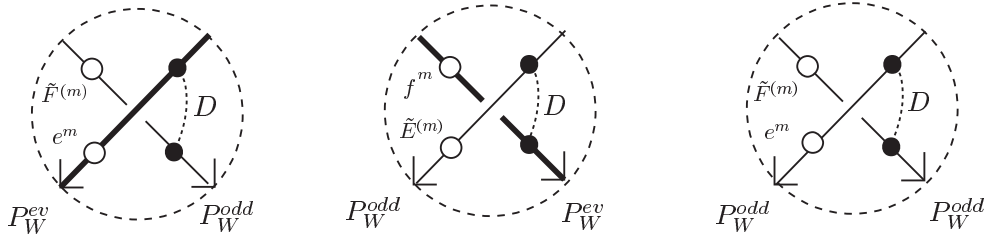}
\caption{The three types of positive crossings. We work up to multiplication by  $\pm q^{j}, K^{2j} (j\in \mathbb{Z})$.}\label{fig:Pev}
\end{figure}%
\subsection{The element $\tilde J(P_W,s)$.}
In this subsection, we construct the element $\tilde J(P_W,s)\in (U_{\mathbb{Z}, q}^{ev}\otimes \bar {U}_q^{ev})^{\otimes k}$ satisfying (\ref{J}).
\begin{lemma} \label{lem1}
For homogeneous elements $x, y\in U_h$, we have 
\begin{itemize}
\item[(i)]$\sum (D^{\pm}_{[1]} \triangleright x)\otimes D^{\pm}_{[2]}= x \otimes K^{\pm |x|}, $
\item[(ii)]$\sum (D^{\pm}_{[1]}\triangleright x)\otimes (D^{\pm}_{[2]}\triangleright y)= q^{ \pm|x||y|}x \otimes y, $ and
\item[(iii)]$(v^{ H^2/2}K)^{\pm 1}\triangleright x= q^{\pm |x|(|x|+1)}x$.
\end{itemize}
\end{lemma}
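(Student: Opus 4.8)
The plan is to reduce all three identities to a single principle: under the left adjoint action, every power series in $H$ acts on a homogeneous element by substituting the scalar $2|x|$ for $H$. First I would record that, since $H$ is primitive with $S(H)=-H$, one has $H\triangleright x=\ad(H\otimes x)=Hx-xH=[H,x]=2|x|\,x$ for homogeneous $x$; this holds on the generators by $[H,E]=2E$, $[H,F]=-2F$ (so the weight is $2\deg$) and extends to all homogeneous $x$ because $[H,-]$ is a derivation. The key structural input is that $\triangleright$ is a genuine action of the algebra $U_h$ on itself, i.e. $(ab)\triangleright x=a\triangleright(b\triangleright x)$. Since $2|x|\,x$ is again homogeneous of the same degree, iterating gives $H^n\triangleright x=(2|x|)^n x$ for all $n\geq 0$, and therefore $g(H)\triangleright x=g(2|x|)\,x$ for any $h$-adically convergent $g(H)=\sum_n c_n H^n$. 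As a sanity check this recovers $K^i\triangleright x=K^i x K^{-i}=q^{i|x|}x$.

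With this principle in hand, (iii) is immediate: writing $(v^{H^2/2}K)^{\pm1}=\exp\big(\pm\tfrac h4(H^2+2H)\big)$ and substituting $H\mapsto 2|x|$ turns the exponent into $\pm\tfrac h4(4|x|^2+4|x|)=\pm h|x|(|x|+1)$, so the action is multiplication by $q^{\pm|x|(|x|+1)}$ since $q=e^{h}$.

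For (i) and (ii) I would expand $D^{\pm}=\exp\big(\pm\tfrac h4\,H\otimes H\big)=\sum_{n\geq0}\tfrac1{n!}(\pm\tfrac h4)^n H^n\otimes H^n$, so that $D^{\pm}_{[1]}$ and $D^{\pm}_{[2]}$ run over matching rescaled powers of $H$. For (i), applying $\triangleright x$ to the first tensorand replaces $H^n$ there by $(2|x|)^n$, and resumming gives $x\otimes\exp\big(\pm\tfrac h2|x|H\big)=x\otimes K^{\pm|x|}$. For (ii), applying the action to both tensorands turns the $n$th term into $\tfrac1{n!}(\pm\tfrac h4\cdot 2|x|\cdot 2|y|)^n\,x\otimes y=\tfrac1{n!}(\pm h|x||y|)^n\,x\otimes y$, and the resummation yields $\exp(\pm h|x||y|)\,x\otimes y=q^{\pm|x||y|}x\otimes y$.

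The step I would be most careful about is justifying that the adjoint action may be applied termwise to the infinite exponential series defining $D$ and $v^{H^2/2}K$; this is legitimate because everything lives in the $h$-adically complete algebra $U_h$ and the factors $(\pm h/4)^n/n!$ force $h$-adic convergence, so the ($h$-adically continuous) action commutes with the sum. The secondary point worth stating explicitly is the module-algebra identity $(ab)\triangleright x=a\triangleright(b\triangleright x)$, which is exactly what lets me pass from the single bracket $[H,x]$ to $H^n\triangleright x=(2|x|)^n x$ without ever having to compute the coproduct of $H^n$ directly.
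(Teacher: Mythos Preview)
Your proof is correct and takes a genuinely different route from the paper's. The paper argues Hopf-algebraically: for (i) it uses $(\Delta\otimes 1)D=D_{13}D_{23}$ and $(S\otimes 1)D=D^{-1}$ to rewrite $\sum(D^+_{[1]}\triangleright x)\otimes D^+_{[2]}=\sum D^+_{[1]}xD^-_{[1]}\otimes D^+_{[2]}D^-_{[2]}$, and then the commutation formula $D(1\otimes x)=(K^{|x|}\otimes x)D$ collapses this to $x\otimes K^{|x|}$; parts (ii) and (iii) are then deduced from (i) together with the symmetry of $D$ and $v^{H^2/2}=\mu(D)$. You instead expand the exponentials explicitly and reduce everything to the single scalar action $H\triangleright x=2|x|\,x$, never touching the coproduct or antipode formulas for $D$. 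Your approach is more elementary and self-contained for $U_h(sl_2)$, and treats (i)--(iii) uniformly; the paper's approach is more structural, avoids series manipulations, and would transfer verbatim to any ribbon Hopf algebra in which an element $D$ satisfies the analogues of (\ref{d1})--(\ref{exD}), even when no explicit exponential presentation is available.
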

\begin{proof}
We prove the  formulas for the positive signs. Then the other cases  are similar.
By the formulas (\ref{d1})--(\ref{exD}), we have
\begin{itemize}
\item[(i)] $\sum (D^{+}_{[1]} \triangleright x)\otimes D^{+}_{[2]}=\sum (D^{+}_{[1]}xD^{-}_{[1]}) \otimes (D^{+}_{[2]}D^-_{[2]})=x \otimes K^{ |x|}$.
\end{itemize}
Using (i), we obtain
\begin{itemize}
\item[(ii)] $\sum (D^{+}_{[1]}\triangleright x)\otimes (D^{+}_{[2]}\triangleright y)= \sum x\otimes (K^{ |x|}\triangleright y)=q^{|x||y|}x \otimes y$, and 
\item[(iii)]$(v^{ H^2/2}K)\triangleright x=\sum (D^{+}_{[1]}D^{+}_{[2]}K)\triangleright x
= q^{|x|}(D^{+}_{[1]}D^{+}_{[2]}\triangleright x)=q^{|x|}(K^{|x|}\triangleright x)=q^{|x|(|x|+1)}x.$
\end{itemize}
\end{proof}
\begin{lemma}\label{DD}
For $k\geq 0$, let $M=(m_{i,j})_{1\leq i,j\leq 2k}$ be a symmetric integer matrix of size $2k$, satisfying $m_{2i,2j}=0$ for $1\leq i,j\leq k$. 
Let  $X=x_1\otimes \cdots\otimes x_{2k}\in U_h^{\otimes 2k}$ be the tensor product of   homogeneous elements $x_1, \ldots,x_{2k}\in U_h$.
We have
\begin{align*}
&\ad^{\otimes k}(\tilde D^{M} X)=
q^{N(M,X)}\ad^{\otimes k}\big(( 1\otimes K^{2a_1(M,X)}\otimes \cdots \otimes 1\otimes K^{2a_m(M,X)})X\big),
\intertext{where if we set  $X_i=x_{2i-1}\triangleright x_{2i}$, then}
a_{i}(M,X)&=\sum_{1\leq j\leq k}m_{2i, 2j-1}|X_j|,
\\
N(M,X)&=\sum_{1\leq i< j\leq k} 2m_{2i-1,2j-1} |X_i||X_j|+\sum_{1\leq i\leq k}m_{2i-1,2i-1}|X_i|(|X_i|+1).
\end{align*}
\end{lemma}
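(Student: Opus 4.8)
The plan is to peel the factors of $\tilde D^{M}$ off one at a time and record the effect of each on $\ad^{\otimes k}$. The enabling observations are that every factor of $\tilde D^{M}$ (each $D_{pq}^{2m_{pq}}$ and each $(v^{H^2/2}K)_p^{m_{pp}}$) lies in a tensor power of the commutative algebra $U_h^0$ and, in $\tilde D^{M}X$, stands to the left of $X$; hence these factors commute and may be processed in any order, and inside each tensorand a chosen $D$-leg may be slid to the far left, in front of $x_p$. Two further facts are used constantly: the $i$th tensorand of $\ad^{\otimes k}(\,\cdot\,)$ is graded-homogeneous of degree $|X_i|=|x_{2i-1}|+|x_{2i}|$ (since $\ad$ preserves the grading), and every $K$-power we create has degree $0$; thus the degree $|X_i|$ that an acting leg ``sees'' is never disturbed by previous insertions. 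Since $\ad$ is a left action, $(ab)\triangleright c=a\triangleright(b\triangleright c)$, the content of the acting slot $2i-1$ of pair $i$ acts adjointly on the content of the acted slot $2i$, producing the $i$th output.

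The computation then splits according to the parities of the two slots joined by each factor, and each case is exactly one part of Lemma \ref{lem1}. The diagonal factor $(v^{H^2/2}K)^{m_{2i-1,2i-1}}$ sits in the acting slot $2i-1$, so by Lemma \ref{lem1}(iii) it multiplies the degree-$|X_i|$ output by $q^{\,m_{2i-1,2i-1}|X_i|(|X_i|+1)}$. A factor $D_{2i-1,2j-1}^{2m}$ joining two acting slots ($i\neq j$) contributes the two outputs $\sum D^{+}_{[1]}\triangleright W_i$ and $\sum D^{+}_{[2]}\triangleright W_j$ with $|W_i|=|X_i|$, $|W_j|=|X_j|$; since a single $D$ scales $W_i\otimes W_j$ by $q^{|X_i||X_j|}$ by Lemma \ref{lem1}(ii) while preserving degrees, the power $2m$ contributes the scalar $q^{\,2m|X_i||X_j|}$ (for $m<0$ one uses the minus versions). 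A factor $D_{2i,2j-1}^{2m}$ joining an acted slot $2i$ to an acting slot $2j-1$ with $i\neq j$ has its odd leg acting on $W_j$ (degree $|X_j|$) while its even leg is a spectator multiplied into slot $2i$; applying Lemma \ref{lem1}(i) to these two tensorands — legitimate because the intervening operation on slot $2i$ is linear, so one may apply $\id\otimes\Phi$ to the identity $\sum(D^{+}_{[1]}\triangleright W_j)\otimes D^{+}_{[2]}=W_j\otimes K^{|X_j|}$ — removes the acting leg and inserts $K^{|X_j|}$, leftmost, into slot $2i$, giving $K^{\,2m|X_j|}$ over the $2m$ copies.

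The only case in which both legs land in the same output is the within-pair factor $D_{2i-1,2i}^{2m}$, where Lemma \ref{lem1}(i) does not literally apply; here I would supply the short direct computation, writing $D^{2m}(x_{2i-1}\otimes x_{2i})=(x_{2i-1}K^{2m|x_{2i}|}\otimes K^{2m|x_{2i-1}|}x_{2i})D^{2m}$ from (\ref{exD}) and collapsing the remaining $D^{2m}$ through the grading to obtain $\ad(D^{2m}(x_{2i-1}\otimes x_{2i}))=\ad(x_{2i-1}\otimes K^{2m|X_i|}x_{2i})$, that is, a residual $K^{\,2m|X_i|}$ in the even slot and no scalar. Collecting everything, the residual $K$-powers accumulating in slot $2i$ over all $j$ (the within-pair term $j=i$ together with the cross terms $j\neq i$) multiply to $K^{\,2a_i(M,X)}$, the scalars from the odd--odd and diagonal factors multiply to $q^{\,N(M,X)}$, and the hypothesis $m_{2i,2j}=0$ ensures that no even--even or even-diagonal factor ever occurs. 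This is precisely the claimed identity.

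The main obstacle is not any single algebraic identity but the bookkeeping: one must verify that each acting leg genuinely sees the degree $|X_j|$ of its own pair's output, independently of the order in which factors are peeled and of the degree-$0$ insertions already present, and that invoking a two-tensorand identity of Lemma \ref{lem1} while the remaining tensorands stay passive is justified by the linearity of the intervening maps. The one genuinely new calculation is the within-pair case, which must be carried out by hand as above; once it is in place, the remaining cases are direct applications of Lemma \ref{lem1}.
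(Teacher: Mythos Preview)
Your approach is essentially the same as the paper's: both proofs peel off the factors of $\tilde D^{M}$ one at a time and record their effect on $\ad^{\otimes k}$ via the three parts of Lemma \ref{lem1}. The paper phrases this as an induction on $\sum_{i,j}|m_{ij}|$, reducing $M$ to a matrix $M'$ differing by a single elementary symmetric increment and invoking the identities
\[
\ad^{\otimes k}(D^{\pm 1}_{2i,2j-1}X),\quad \ad^{\otimes k}(D^{\pm 1}_{2i,2i-1}X),\quad \ad^{\otimes k}(D^{\pm 1}_{2i-1,2j-1}X),\quad \ad^{\otimes k}\big((v^{H^2/2}K)^{\pm 1}_{2i-1}X\big),
\]
which match your four cases exactly. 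Your explicit treatment of the within-pair factor $D_{2i-1,2i}^{2m}$ as a separate hand computation is a genuine point: the paper simply asserts the formula $\ad^{\otimes k}(D^{\pm 1}_{2i,2i-1}X)=\cdots(x_{2i-1}\triangleright K^{\pm|X_i|}x_{2i})\cdots$ without derivation, so your sketch (commute $D^{2m}$ past $x_{2i-1}\otimes x_{2i}$ via (\ref{exD}) and then collapse the residual right-hand $D^{2m}$) actually fills in a detail the paper leaves implicit.
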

Here $|X_i|=|x_{2i-1}|+|x_{2i}|$ is the degree of $X_i$ defined in Section \ref{pre}.
\begin{proof}We use  induction on $\sum_{1\leq i,j\leq 2k}|m_{ij}|$.
If $\sum_{1\leq i,j\leq 2k}|m_{ij}|=0$, i.e., $M=0$, then the claim is clear.
Let us assume $M\neq0$. 
Then there is a matrix  $M'$  satisfying the assertion, and   either 
\begin{align*}
M&=M'\pm (1_{2i,2j-1}+1_{2j-1,2i}), \quad \text{for}  \ 1\leq i \neq j\leq k, \ \text{or}
\\
M&=M'\pm (1_{2i-1,2j-1}+1_{2j-1,2i-1}), \quad \text{for}  \ 1\leq i \neq j\leq k, \ \text{or}
\\
M&=M'\pm 1_{2i-1,2i-1}, \quad \text{for}  \ 1\leq i\leq k,
\end{align*}
where $1_{i,j}$ is the matrix of size $2k$ such that the $(i,j)$-component is $1$ and the others are  $0$. 
Note that 
\begin{align*}
&\tilde D^{M'\pm (1_{i,j}+1_{j,i})}=\tilde D^{M'}D_{i,j}^{\pm 2}, \quad  \text{for} \ \ 1\leq i\neq j\leq 2k, \ \text{and} 
\\
&\tilde D^{M' \pm 1_{ii}}=\tilde D^{M'}(v^{ H^2/2}K)^{\pm 1}_i, \quad \text{for} \ \ 1\leq i\leq 2k.
\end{align*}
Then the following  formulas using Lemma  \ref{lem1} imply the assertion.
\begin{align*}
\begin{split} 
\ad^{\otimes k}(D^{\pm 1}_{2i,2j-1}X)&=X_1\otimes \cdots \otimes (x_{2i-1}\triangleright D^{\pm}_{[1]}x_{2i})\otimes \cdots \otimes (D^{\pm}_{[2]}\triangleright X_j)\otimes \cdots \otimes X_k
\\
&=X_1\otimes \cdots \otimes (x_{2i-1}\triangleright K^{\pm |X_j|}x_{2i})\otimes \cdots \otimes  X_j\otimes \cdots \otimes X_k,
\end{split}
\\
\ad^{\otimes k}(D^{\pm 1}_{2i,2i-1}X)&=X_1\otimes \cdots \otimes (D^{\pm}_{[2]}x_{2i-1}\triangleright D^{\pm}_{[1]}x_{2i})\otimes \cdots \otimes \cdots \otimes X_k
\\
&=X_1\otimes \cdots \otimes (x_{2i-1}\triangleright K^{\pm |X_i|}x_{2i})\otimes \cdots \otimes  X_j\otimes \cdots \otimes X_k,
\\
\begin{split}
\ad^{\otimes k}(D^{\pm 1}_{2i-1,2j-1}X)&=X_1\otimes \cdots \otimes (D^{\pm}_{[1]}\triangleright X_i)\otimes \cdots \otimes (D^{\pm}_{[2]}\triangleright X_j)\otimes \cdots\otimes  X_k
\\
&=q^{\pm |X_i||X_j|}X_1\otimes \cdots \otimes X_i \otimes \cdots \otimes  X_j \otimes \cdots \otimes X_k,
\end{split}
\\
\begin{split} 
\ad^{\otimes k}\big((v^{ H^2/2}K)^{\pm 1}_{2i-1}X\big)&=
X_1\otimes \cdots \otimes ((v^{ H^2/2}K)^{\pm 1}\triangleright X_i)\otimes  \cdots \otimes  X_k
\\
&=q^{\pm |X_i|(|X_i|+1)}X_1\otimes \cdots \otimes X_i \otimes \cdots \otimes X_k,
\end{split}
\end{align*}
for $1\leq i\neq j\leq k$.
\end{proof}
By Proposition \ref{W}, we have
$$
X:=(\tilde D^{\mathrm{Lk}(W)})^{-1}J(P_W,s)\in (U_{\mathbb{Z}, q}^{ev}\otimes \bar {U}_q^{ev})^{\otimes k}.
$$ 
Since the linking matrix  $\mathrm{Lk}(W)$ of $W$ satisfies the assumption of Lemma \ref{DD},  we obtain the element $\tilde J(P_W,s)\in (U_{\mathbb{Z}, q}^{ev}\otimes \bar {U}_q^{ev})^{\otimes k}$  satisfying (\ref{J}), such that
\begin{align*}
\tilde J(P_W,s):=q^{N}( 1\otimes K^{2a_1}\otimes \cdots \otimes 1\otimes K^{2a_k})X,
\end{align*}
where we set 
$$N=N(\mathrm{Lk}(W),X),
$$
and 
$$
a_i=a_i(\mathrm{Lk}(W),X),$$
for $i=1,\ldots, k$, as in Lemma \ref{DD}.
\subsection{Filtrations of $\bar U_{q}^{ev}$.}
In this subsection, we define two  filtrations $\{A_p\}_{p\geq 0}$  and $\{C_p\}_{p\geq 0}$ of $\bar U_{q}^{ev}$, which are cofinal with each other.
We give four equivalent definitions for $\{A_p\}_{p\geq 0}$, and  two  for $\{C_p\}_{p\geq 0}$.

For a subset $X\subset \bar  U_q^{ev}$, let $\langle X \rangle_{ideal} $ denote the two-sided ideal of $\bar  U_q^{ev}$ generated by $X$.
For $p\geq 0$, set 
\begin{align*}
&A_p=\langle U_{\mathbb{Z},q}\triangleright e^p  \rangle_{ideal},
\\
&A'_p=\langle U_{\mathbb{Z},q}\triangleright f^p \rangle_{ideal},
\\
&B_p=\langle K^p(U_{\mathbb{Z},q}\triangleright K^{-p}e^p) \rangle_{ideal},
\\
&B'_p=\langle K^p(U_{\mathbb{Z},q}\triangleright f^pK^{-p})  \rangle_{ideal},
\\
&C_p=\langle 
\sum_{p'\geq p}(U_{\mathbb{Z},q}\tilde E^{(p')}\triangleright \bar U_q^{ev}\big)
 \rangle_{ideal},
\\
&C'_p=\langle 
\sum_{p'\geq p}(U_{\mathbb{Z},q}\tilde F^{(p')}\triangleright \bar U_q^{ev}\big)
 \rangle_{ideal}.
\end{align*}
\begin{proposition}\label{EQ}
For $p\geq 0$,  we have
\begin{align*}
A_p=A'_p=B_p=B'_p.
\end{align*}
\end{proposition}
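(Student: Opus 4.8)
The plan is to prove the four ideals coincide by first isolating one structural fact and then splitting the equalities into two independent parts: the ``grouplike twists'' $A_p=B_p$, $A'_p=B'_p$, and the ``$e\leftrightarrow f$ symmetry'' $A_p=A'_p$. First I would record that all four sets are genuine two-sided ideals of $\bar U_q^{ev}$: the generating sets lie in $\bar U_q^{ev}$ by Proposition \ref{Habi}, where for $B_p,B'_p$ one uses the case $i=1$ together with the parity of $p$ to absorb the outer $K^p$. The key observation, proved once and used throughout, is that each of the four ideals is stable under the adjoint action of $U_{\mathbb{Z},q}$. This follows from the module-algebra identity $a\triangleright(xyz)=\sum (a'\triangleright x)(a''\triangleright y)(a'''\triangleright z)$: for $x,z\in\bar U_q^{ev}$ and $y$ in the generating orbit, the outer factors stay in $\bar U_q^{ev}$ by Proposition \ref{Habi} (case $i=0$), while the middle factor stays in the orbit because $a''\triangleright(b\triangleright f^p)=(a''b)\triangleright f^p$.

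Next I would prove $A_p=B_p$ (and $A'_p=B'_p$ identically). The elementary input is that $a\triangleright K^{\mp p}\in K^{\mp p}\bar U_q^{ev}$ for all $a\in U_{\mathbb{Z},q}$, which one checks on the algebra generators $K^{\pm1},\tilde E^{(n)},\tilde F^{(n)}$ from (\ref{K})--(\ref{KF}) and then propagates through the module-algebra rule and $(ab)\triangleright x=a\triangleright(b\triangleright x)$. For $B_p\subseteq A_p$, expand a generator $K^p(a\triangleright K^{-p}e^p)=\sum K^p(a'\triangleright K^{-p})(a''\triangleright e^p)$; the first factor lies in $K^pK^{-p}\bar U_q^{ev}=\bar U_q^{ev}$ and the second in $A_p$, so the product lies in $A_p$. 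For the reverse, write $e^p=K^p(K^{-p}e^p)$, expand $a\triangleright e^p=\sum(a'\triangleright K^p)(a''\triangleright K^{-p}e^p)$, recognize $K^p(a''\triangleright K^{-p}e^p)$ as a generator of $B_p$, and use that conjugation by $K^p$ preserves $\bar U_q^{ev}$; this gives $a\triangleright e^p\in\bar U_q^{ev}\cdot B_p\subseteq B_p$.

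It then remains to prove $A_p=A'_p$, and here the stability fact pays off: since $A'_p$ is an ideal stable under $\ad(U_{\mathbb{Z},q})$, the inclusion $A_p\subseteq A'_p$ is equivalent to the single membership $e^p\in A'_p$ (then $U_{\mathbb{Z},q}\triangleright e^p\subseteq A'_p$ by stability, whence $A_p\subseteq A'_p$), and symmetrically $A'_p\subseteq A_p$ reduces to $f^p\in A_p$. To establish $e^p\in A'_p$ I would extract $e^p$ from the adjoint orbit of $f^p$: by (\ref{KE}) the element $\tilde E^{(2p)}\triangleright f^p$ is a sum of terms $f^{p-p'}(\text{Cartan in }K^2)\,e^{2p-p'}$, whose lowest $f$-degree part ($p'=p$) is a fixed polynomial in $K^2$ times $e^p$; using the commutation relation (\ref{B2}) to rewrite the remaining $f$-positive terms and combining several generators $\tilde E^{(n)}\triangleright f^p$ with left and right multiplication inside $A'_p$, one solves for $e^p$, arguing by induction on $p$. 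The membership $f^p\in A_p$ is handled the same way from (\ref{KF}), or deduced from $e^p\in A'_p$ through the $E\leftrightarrow F$ automorphism $\omega$ of $U_h$.

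The main obstacle is exactly this last step. The structural reductions (ideal-hood, $\ad$-stability, and the two grouplike twists) are formal consequences of the Hopf structure and Proposition \ref{Habi}, but showing $e^p\in A'_p$ requires a genuine identity: the natural outputs of the adjoint action carry Cartan factors $\{H+\cdots\}_{q,\ast}$ and powers of $K^2$ that are \emph{not} invertible in $\bar U_q^{ev}$, so one cannot simply divide them out and must instead produce an explicit $\mathbb{Z}[q,q^{-1}]$-combination of ideal elements equal to $e^p$. Making the $q$-binomial bookkeeping in (\ref{KE}), (\ref{KF}) and (\ref{B2}) collapse correctly, most likely via a triangularity argument in the $f$-degree combined with induction on $p$, is where the real work lies.
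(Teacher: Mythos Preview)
Your argument for $A_p=B_p$ (and $A'_p=B'_p$) is essentially the paper's: split $e^p=K^p\cdot K^{-p}e^p$, use the module--algebra identity for $\ad$, and invoke Proposition~\ref{Habi} to control $a\triangleright K^{\pm p}$. Fine.

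The divergence is in how you link the $e$-side to the $f$-side. You attempt $A_p=A'_p$ directly, reducing to $e^p\in A'_p$, and then describe an inductive/triangularity scheme to extract $e^p$ from combinations of $\tilde E^{(n)}\triangleright f^p$ while fighting the non-invertible Cartan factors. You correctly flag this as the hard part, and you do not complete it. The paper sidesteps this difficulty entirely by reversing the order: it proves $B_p=B'_p$ \emph{first}, via the single clean identities
\[
f^pK^{-p}=(-1)^pq^{-p^2}\,\tilde F^{(2p)}\triangleright K^{-p}e^p,
\qquad
K^{-p}e^p=(-1)^pq^{p^2}\,\tilde E^{(2p)}\triangleright f^pK^{-p},
\]
which show that $K^{-p}e^p$ and $f^pK^{-p}$ lie in each other's $U_{\mathbb{Z},q}$-orbit. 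With the $K^{-p}$ twist present, all the lower terms in (\ref{KE})--(\ref{KF}) vanish and the ``obstacle'' you identify never arises: there is no Cartan polynomial to invert. Then $A_p=B_p=B'_p=A'_p$ gives all four equalities at once.

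So your plan is not wrong, but it is misordered in a way that creates an artificial difficulty you leave unresolved. The fix is to prove the $e\leftrightarrow f$ symmetry at the level of the $B$-ideals, where it is a one-line computation, rather than at the level of the $A$-ideals. Your preparatory observations about $\ad$-stability of the ideals are correct but are not needed for this proposition.
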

\begin{proof}
By the formulas
\begin{align}\label{hi}
f^pK^{-p}=(-1)^pq^{-p^2}\tilde F^{(2p)}\triangleright K^{-p}e^p \in U_{\mathbb{Z},q} \triangleright K^{-p}e^p, \\
K^{-p}e^{p}= (-1)^pq^{p^2} \tilde E^{(2p)}\triangleright f^pK^{-p} \in U_{\mathbb{Z},q} \triangleright f^pK^{-p},
\end{align}
we have  $B_p=B'_p$.
We prove $A_p=B_p$, then $A'_p=B'_p$ is similar.
By  Proposition \ref{Habi}, we have
\begin{align*}
K^p(U_{\mathbb{Z},q}\triangleright K^{-p}e^p)
&\subset  K^p(U_{\mathbb{Z},q}\triangleright K^{-p})\cdot (U_{\mathbb{Z},q}\triangleright e^p)
\\
&\subset  \bar U_q^{ev}(U_{\mathbb{Z},q}\triangleright e^p)\subset A_p.
\end{align*}
Hence we have  $B_p\subset A_p$. Conversely, we have
\begin{align*}
U_{\mathbb{Z},q}\triangleright e^p= U_{\mathbb{Z},q}\triangleright K^pK^{-p}e^p
&\subset  (U_{\mathbb{Z},q}\triangleright K^p)\cdot (U_{\mathbb{Z},q}\triangleright K^{-p}e^p)
\\
&\subset  \bar U_q^{ev}K^p(U_{\mathbb{Z},q}\triangleright K^{-p}e^p)\subset B_p.
\end{align*}
Hence we have  $A_p\subset B_p$, this completes the proof.
\end{proof}
\begin{proposition}\label{EQQ}
\begin{itemize}
\item[(i)]
For $p\geq 0$, we have $
C_p=C'_p.$
\item[(ii)]
For $p\geq 0$, we have $C_{2p}\subset A_p$.
\item[(iii)]
If $p\geq 0$ is even, then we have $C_{2p}=A_p$.
\end{itemize}
\end{proposition}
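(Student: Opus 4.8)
The plan is to reduce all three claims to membership statements for the explicit adjoint actions (\ref{KE})--(\ref{KF}), using that every one of the subspaces $A_p,A'_p,B_p,B'_p,C_p,C'_p$ is a two-sided ideal of $\bar{U}_q^{ev}$ which is stable under the left adjoint action of $U_{\mathbb{Z},q}$. Stability holds because each is generated by an $\ad$-stable set and the identity $a\triangleright(xy)=\sum(a'\triangleright x)(a''\triangleright y)$, together with Proposition \ref{Habi}, keeps the three tensor factors inside $\bar{U}_q^{ev}$. Consequently a generator $(u\tilde E^{(p')})\triangleright x=u\triangleright(\tilde E^{(p')}\triangleright x)$ of $C_p$ lies in any such $\ad$-stable ideal as soon as $\tilde E^{(p')}\triangleright x$ does, so by Lemma \ref{F2} it suffices throughout to treat $x=f^{i_1}K^{2i_2}e^{i_3}$ and to expand $\tilde E^{(p')}\triangleright x$ (resp.\ $\tilde F^{(p')}\triangleright x$) by (\ref{KE}) (resp.\ (\ref{KF})).

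For (i) I would imitate the proof of Proposition \ref{EQ}, where $A_p=A'_p$ was obtained from the identities (\ref{hi}) that convert a single $\tilde F^{(2p)}$- (resp.\ $\tilde E^{(2p)}$-) adjoint action into an $e$- (resp.\ $f$-) power. The same $E\leftrightarrow F$ symmetry, made explicit through (\ref{B2}) and (\ref{hi}), should let one rewrite each generator $\tilde F^{(p')}\triangleright x$ with $p'\ge p$ as an element of the ideal $C_p$ built from $\tilde E$-adjoint actions, and symmetrically; the two inclusions give $C_p=C'_p$. This is the routine part.

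The hard part will be (ii), $C_{2p}\subset A_p$. Expanding $\tilde E^{(p')}\triangleright(f^{i_1}K^{2i_2}e^{i_3})$ by (\ref{KE}) produces a sum of terms $f^{a}K^{2i_2}g(\dots)e^{c}$ with $a=i_1-r$, $c=i_3+p'-r$, and $g$ a polynomial in $K^2$. Using the four descriptions $A_p=A'_p=B_p=B'_p$ of Proposition \ref{EQ}, such a term already lies in $A_p$ whenever $c\ge p$ (extract $e^p$, i.e.\ use $B_p$) or $a\ge p$ (extract $f^p$, i.e.\ use $A'_p$). Since $p'\ge 2p$ and $c-a=i_3+p'-i_1$ is fixed, the only terms escaping both bounds are the degenerate ones in which the powers of both $e$ and $f$ have dropped below $p$; for these the accompanying Cartan polynomial $g$ is forced, via the $q$-binomial identity $\sum_{s}(-1)^sq^{\binom{s}{2}}\begin{bmatrix}n\\s\end{bmatrix}_q z^s=\prod_{j=0}^{n-1}(1-q^jz)$, to contain a factor $\{H+c'\}_{q,\ell}$ of order $\ell\ge p$. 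I therefore expect the crux to be a separate lemma identifying the Cartan part $A_p\cap\bar{U}_q^{ev0}$ and showing that every such high-order factor (times the surviving low powers of $e,f$) indeed lies in $A_p$; the factor $2$ in $C_{2p}$ is exactly what guarantees $\ell\ge p$.

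Finally, (iii) should follow quickly from (ii) together with the reverse inclusion $A_p\subset C_{2p}$. For even $p$ the element $f^pK^{-p}$ lies in $\bar{U}_q^{ev}$, its $K$-exponent $-p$ being even, so the second of the relations (\ref{hi}), namely $K^{-p}e^p=(-1)^pq^{p^2}\tilde E^{(2p)}\triangleright(f^pK^{-p})$, exhibits $K^{-p}e^p$ as a generator of $C_{2p}$. As $C_{2p}$ is $\ad$-stable and $K^{p}\in\bar{U}_q^{ev0}$, the whole set $K^{p}(U_{\mathbb{Z},q}\triangleright K^{-p}e^p)$ lies in $C_{2p}$, whence $B_p\subset C_{2p}$; combining with $A_p=B_p$ from Proposition \ref{EQ} and the inclusion $C_{2p}\subset A_p$ of (ii) yields $A_p=C_{2p}$. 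The evenness hypothesis is used only to keep $f^pK^{-p}$ inside $\bar{U}_q^{ev}$.
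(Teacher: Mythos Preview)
Your treatments of (i) and (iii) are essentially the paper's. For (i) the paper is more concrete than your sketch: from the identity $\tilde E^{(2p)}\triangleright \tilde F^{(p)}K^{-p}=(-1)^pq^{-\frac12 p(p+1)}K^{-p}\tilde E^{(p)}$ one gets $U_{\mathbb{Z},q}\tilde E^{(p)}\subset U_{\mathbb{Z},q}\tilde F^{(p)}U_{\mathbb{Z},q}$ directly, hence $U_{\mathbb{Z},q}\tilde E^{(p)}\triangleright\bar U_q^{ev}\subset U_{\mathbb{Z},q}\tilde F^{(p)}\triangleright\bar U_q^{ev}$ and $C_p\subset C'_p$. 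Part (iii) is exactly as you describe.

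For (ii) there is a genuine gap. Your term-by-term expansion of (\ref{KE}) really does produce terms $f^{a}K^{2i_2}g\,e^{c}$ with both $a<p$ and $c<p$ (e.g.\ $i_1=3$, $i_3=0$, $p'=4$, $p=2$, $r=3$ gives $a=0$, $c=1$), and your plan for these is only conjectural: the polynomial $g(i_1,i_2,i_3,p',r)$ carries the $s$-dependent factor $\begin{bmatrix}p'-r+i_2+i_3+s-1\\p'-r\end{bmatrix}_q$, so the $q$-binomial identity does not directly yield a product $\{H+c'\}_{q,\ell}$, and you have not proved the ``separate lemma'' on $A_p\cap\bar U_q^{ev0}$ your argument would still require afterwards.

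The paper avoids this entirely by \emph{not} arguing term-by-term. The observation you are missing is that the \emph{whole} element $\tilde E^{(p')}\triangleright f^{i_1}K^{2i_2}e^{i_3}$ already lies in $A'_{i_1}=\langle U_{\mathbb{Z},q}\triangleright f^{i_1}\rangle_{ideal}$: write $f^{i_1}K^{2i_2}e^{i_3}=f^{i_1}\cdot(K^{2i_2}e^{i_3})$ and use the comultiplication together with Proposition~\ref{Habi} to see it as a sum $(u\triangleright f^{i_1})\cdot(\text{element of }\bar U_q^{ev})$ with $u\in U_{\mathbb{Z},q}$. When $i_1<p'$, the expansion (\ref{KE}) shows that every $e$-power occurring is at least $i_3+p'-i_1$, so the same element also lies in $\langle e^{i_3+p'-i_1}\rangle_{ideal}\subset A_{i_3+p'-i_1}$. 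Hence it sits in $A_{i_1}\cap A_{i_3+p'-i_1}=A_{\max\{i_1,\,i_3+p'-i_1\}}$, and $\max\{i_1,\,i_3+p'-i_1\}\ge\tfrac{i_3+p'}{2}\ge p$ finishes (ii) with no Cartan analysis at all. The remaining case $i_1\ge p'$ is immediate from $A'_{i_1}\subset A'_{p}=A_p$.
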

\begin{proof}

(i) We prove    $C_p\subset C'_p$, then  $C_p\supset C'_p$ is similar.
Using the formula
\begin{align*}
\tilde E^{(2p)}\triangleright \tilde F^{(p)}K^{-p}=(-1)^pq^{-\frac{1}{2}p(p+1)}K^{-p}\tilde E^{(p)},
\end{align*}
 we have
\begin{align*}
U_{\mathbb{Z},q}\tilde E^{(p)}&\subset U_{\mathbb{Z},q}\big(\tilde E^{(2p)}\triangleright \tilde F^{(p)}K^{-p}\big)
\\
&\subset U_{\mathbb{Z},q}\tilde F^{(p)}U_{\mathbb{Z},q}.
\end{align*}
Hence we have
\begin{align*}
U_{\mathbb{Z},q}\tilde E^{(p)}\triangleright \bar U_q^{ev}&
\subset U_{\mathbb{Z},q}\tilde F^{(p)}U_{\mathbb{Z},q}\triangleright  \bar U_q^{ev}
\\
&\subset U_{\mathbb{Z},q}\tilde F^{(p)}\triangleright \bar U_q^{ev}.
\end{align*}
This completes the proof.

(ii)
In view of Lemma \ref{F2}, it is enough to prove that
\begin{align*}
\tilde E^{(p')}\triangleright f^{i_1}K^{2i_2}e^{i_3}\subset A_p,
\end{align*}
for $p'\geq  2p.$
If $i_1\geq p'\geq p$, then the assertion follows from
\begin{align*}
U_{\mathbb{Z},q} \triangleright f^{i_1}K^{2i_2}e^{i_3}\subset (U_{\mathbb{Z},q}\triangleright f^{i_1})\bar U_{q}^{ev}\subset A_p'=A_p.
\end{align*}
If $i_1<p'$, then we have
\begin{align*}
\tilde E^{(p')}\triangleright f^{i_1}K^{2i_2}e^{i_3}&\in \langle U_{\mathbb{Z},q}\triangleright  f^{i_1} \rangle_{ideal} \cap  \langle e^{i_3+p'-i_1}\rangle_{ideal},
\\
&\subset A'_{i_1}\cap A_{i_3+p'-i_1}
\\
&\subset A_{\max\{i_1,i_3+p'-i_1\}},
\end{align*}
where the $\in $  follows from the formula (\ref{KE}), and the last $\subset $ follows from  Proposition  \ref{EQ}.
Hence  the assertion follows from 
\begin{align*}
\max\{i_1, i_3+p'-i_1\}\geq \frac{i_3+p'}{2}\geq p.
\end{align*}

(iii)
If $p\geq 0$ is even, then  we have 
\begin{align*}
K^p(U_{\mathbb{Z},q}\triangleright K^{-p}e^p)&=(-1)^pq^{p^2}K^p(U_{\mathbb{Z},q}\triangleright (\tilde E^{(2p)}\triangleright f^pK^{-p}))
\\
&\subset \langle U_{\mathbb{Z},q}\tilde E^{(2p)}\triangleright \bar U_q^{ev}\rangle_{ideal}\subset C_{2p},
\end{align*}
from (\ref{hi}).
Hence we have $C_{2p}\supset B_p(=A_p)$, this completes the proof.
\end{proof}
\begin{corollary}
For $p\geq 0$, we have
\begin{align*}
C_{2p}\subset h^pU_h.
\end{align*}
\end{corollary}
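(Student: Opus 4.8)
The plan is to deduce the corollary directly from part (ii) of Proposition~\ref{EQQ}, which already furnishes $C_{2p}\subset A_p$; it therefore suffices to prove that $A_p\subset h^pU_h$. Recall that $A_p$ is, by definition, the two-sided ideal of $\bar U_q^{ev}$ generated by the set $U_{\mathbb{Z},q}\triangleright e^p$. Since $\bar U_q^{ev}$ is a subring of $U_h$, every element of $A_p$ is a finite sum of terms $a\,(x\triangleright e^p)\,b$ with $a,b\in\bar U_q^{ev}$ and $x\in U_{\mathbb{Z},q}$. Hence, once I know that $h^pU_h$ is a two-sided ideal of $U_h$ absorbing the generators, the whole of $A_p$ will be contained in it, and the corollary follows.

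The key observation I would isolate first is that $e^p\in h^pU_h$. Indeed, $e=v^{-1}(q-1)E$, and $q-1=\exp(h)-1=h\bigl(1+\tfrac{h}{2}+\tfrac{h^2}{6}+\cdots\bigr)$ is $h$ times a unit of $\mathbb{Q}[[h]]$ (the bracketed series has constant term $1$). Consequently $e^p=v^{-p}(q-1)^pE^p$ equals $h^p$ times an element of $U_h$, so $e^p\in h^pU_h$. This single divisibility statement is what converts the $p$-th power into the desired factor $h^p$.

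Next I would exploit the centrality of $h$ in $U_h$: because $h$ is a central scalar in $\mathbb{Q}[[h]]$, the set $h^pU_h$ is a genuine two-sided ideal of $U_h$, and it is moreover stable under the left adjoint action. Concretely, if $y=h^pz$ then $x\triangleright y=\sum x'\,h^p z\,S(x'')=h^p(x\triangleright z)\in h^pU_h$. Applying this with $y=e^p$ shows $U_{\mathbb{Z},q}\triangleright e^p\subset h^pU_h$, and since $h^pU_h$ absorbs left and right multiplication by elements of $\bar U_q^{ev}\subset U_h$, every generating product $a\,(x\triangleright e^p)\,b$ lies in $h^pU_h$. Thus $A_p\subset h^pU_h$, and combining with $C_{2p}\subset A_p$ from Proposition~\ref{EQQ}(ii) yields $C_{2p}\subset h^pU_h$.

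I expect no serious obstacle here: the argument is essentially bookkeeping of $h$-divisibility. The only two points I would state carefully are the centrality of $h$, which is exactly what makes $h^pU_h$ a two-sided ideal preserved by both the adjoint action and two-sided multiplication, and the factorization of $q-1$ as $h$ times a unit of $\mathbb{Q}[[h]]$, which supplies the power $h^p$ after raising $e$ to the $p$-th power.
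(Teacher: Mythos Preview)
Your proof is correct and follows essentially the same approach as the paper: both deduce $C_{2p}\subset A_p$ from Proposition~\ref{EQQ}(ii) and then observe that $e^p\in h^pU_h$ forces $A_p\subset h^pU_h$. The paper compresses this into a single line, whereas you have spelled out the routine details (the factorization $q-1=h\cdot(\text{unit})$, centrality of $h$, and stability of $h^pU_h$ under the adjoint action and two-sided multiplication).
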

\begin{proof}
Since $e^p\subset h^pU_h$, we have $C_{2p}\subset A_p\subset h^pU_h$ by Proposition \ref{EQQ}.
\end{proof}
\subsection{The  completion $(\bar U_q^{ev})\;\hat  {}^{\;\hat  \otimes n}$ of $(\bar U_q^{ev})^{ \otimes n}$.}
In this subsection we define the completion $(\bar U_q^{ev})\;\hat  {}^{\;\hat  \otimes n}$ of $(\bar U_q^{ev})^{ \otimes n}$, and prove Theorem \ref{1}.
Let  $(\bar U_q^{ev})\;\hat{} $ denote the completion in $U_h$ of $\bar U_q^{ev}$ with respect to the decreasing filtration 
$\{C_p\}_{p\geq 0}$, i.e.,  $(\bar U_q^{ev})\;\hat{}$ is the image of the homomorphism
\begin{align*}
\varprojlim _{p}  \bar U_q^{ev}/C_p\rightarrow U_h.
\end{align*}
induced by  the inclusion $\bar U_q^{ev} \subset U_h$, which is well defined  since $C_{2p}\subset h^pU_h$ for $p\geq 0$.
For $n\geq 1$, we define a filtration $\{C_p^{(n)}\}_{p\geq 0}$ for $(\bar U_q^{ev})^{ \otimes n}$ by
\begin{align*}
C_p^{(n)}= \sum_{j=1}^n \bar U_q^{ev} \otimes \cdots \otimes \bar U_q^{ev}\otimes 
 C_p \otimes \bar U_q^{ev}\otimes \cdots \otimes \bar U_q^{ev},
\end{align*}
where $C_p$ is at the $j$th position.
Define the completion $(\bar U_q^{ev})\;\hat  {}^{\;\hat  \otimes n}$ of $ (\bar U_q^{ev})^{ \otimes n}$ as the image of the homomorphism
\begin{align*}
\varprojlim_{p}\big((\bar U_q^{ev})^{ \otimes n}/C_p^{(n)}\big) \rightarrow  
U_h^{\hat \otimes n}.
\end{align*}
For $n=0$, it is natural to set 
\begin{align*}
C_p^{(0)}=\begin{cases}
\mathbb{Z}[q,q^{-1}] \quad \text{if} \ \ p=0,
\\
0 \quad  \quad \quad \quad  \text{otherwise}.
\end{cases}
\end{align*}
Thus, we have 
\begin{align*}
(\bar U_q^{ev})\;\hat  {}^{\;\hat  \otimes 0}=\mathbb{Z}[q,q^{-1}].
\end{align*}

Recall the setting mentioned at the beginning of this section.
For $i=1,\ldots,2k,$ let $P_i$ denote the part of $P_W$ corresponding to the $i$th component of $W=W_1\cup \cdots \cup W_{2k}$, 
and  $C(P_i)$  the set of the crossings on the component $P_i$.
For $p\geq 0$, we denote by $\mathcal{I}_p$ the two-sided ideal of $U_{\mathbb{Z},q}$ generated by
$\tilde E^{(p)}, \tilde F^{(p)}\in U_{\mathbb{Z},q}$.
For $s\in \mathcal{S}(P_W)$, set $|s|_i=\max\{ s(c) \ | \ c\in C(P_i)\}$.
\begin{lemma}\label{Q2}
For each $s\in \mathcal{S}(P_W)$, there are  elements $w_{2i-1}\in U_{\mathbb{Z},q}^{ev} \cap \mathcal{I}_{|s|_{2i-1}}$ and $w_{2i}\in \bar U_q^{ev}\cap \mathcal{I}_{|s|_{2i}}$ for $i=1,\ldots k$, such that 
\begin{align*}
&\tilde J(P_W,s)= w_1\otimes \cdots  \otimes w_{2k}.
\end{align*}
\end{lemma}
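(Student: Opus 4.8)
The plan is to read $\tilde J(P_W,s)$ off the decorated diagram tensorand by tensorand, keeping track on each component $P_i$ of the single white dot carrying the maximal label $|s|_i$. Recall that by construction $\tilde J(P_W,s)=q^{N}(1\otimes K^{2a_1}\otimes\cdots\otimes 1\otimes K^{2a_k})X$ with $X=(\tilde D^{\mathrm{Lk}(W)})^{-1}J(P_W,s)$. I would first argue that $X$ is a \emph{simple} tensor $x_1\otimes\cdots\otimes x_{2k}$. The point is that once the state $s$ is fixed, every factor $R^{\pm}_{s(c)}$ is a simple tensor and the antipodes $S'$ send each white‑dot label to a scalar multiple of itself times a power of $K$; hence the only genuine sums entering $J(P_W,s)$ are those coming from the $D$‑factors on the black dots. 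By the sliding process used in the proof of Lemma~\ref{dia} these black dots collect into the single element $\tilde D^{\mathrm{Lk}(W)}$, at the cost of diamond corrections which are even powers of $K$ on individual strands, giving the exact factorization $J(P_W,s)=\tilde D^{\mathrm{Lk}(W)}\cdot(x_1\otimes\cdots\otimes x_{2k})$. Thus $X=x_1\otimes\cdots\otimes x_{2k}$, where $x_i$ is the ordered product along $P_i$ of the white‑dot labels together with the (even) diamond $K$‑powers.

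Multiplying by the scalar $q^{N}$ and by the even powers $K^{2a_i}$ preserves the simple‑tensor form, so $\tilde J(P_W,s)=w_1\otimes\cdots\otimes w_{2k}$ with each $w_i$ equal to $x_i$ up to a unit scalar and a power of $K^{2}$. The subalgebra memberships are then immediate from (\ref{note}) and Proposition~\ref{W}: since $x_{2i-1}\in U_{\mathbb{Z},q}^{ev}$ and $x_{2i}\in\bar U_q^{ev}$, and since $K^{2a_i}\in\bar U_q^{ev0}\subset\bar U_q^{ev}$ while $q^{N}$ is a unit, we obtain $w_{2i-1}\in U_{\mathbb{Z},q}^{ev}$ and $w_{2i}\in\bar U_q^{ev}$.

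It remains to prove the ideal membership. On each component $P_i$ I choose a crossing $c_0\in C(P_i)$ realizing the maximum $s(c_0)=|s|_i$. Whichever admissible label the modification of Proposition~\ref{W} places on $P_i$ at $c_0$—one of $e^{|s|_i}$, $f^{|s|_i}$, $\tilde E^{(|s|_i)}$, $\tilde F^{(|s|_i)}$, up to a unit and a power of $K$, the choice being dictated only by whether $P_i$ is the over‑ or under‑strand at $c_0$ and by whether $c_0\in C^{eo}$ or $c_0\in C^{oo}$—lies in the two‑sided ideal $\mathcal I_{|s|_i}$ of $U_{\mathbb{Z},q}$. Indeed, $e^n=\{n\}_q!\,\tilde E^{(n)}$ and $f^n=\{n\}_q!\,\tilde F^{(n)}$ up to powers of $K$, so all four elements are $U_{\mathbb{Z},q}$‑multiples of $\tilde E^{(|s|_i)}$ or $\tilde F^{(|s|_i)}$. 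Since $x_i$, and hence $w_i$, is an ordered product of elements of $U_{\mathbb{Z},q}$ one of whose factors lies in $\mathcal I_{|s|_i}$, and this ideal is two‑sided, we conclude $w_i\in\mathcal I_{|s|_i}$. Combined with the previous paragraph this yields $w_{2i-1}\in U_{\mathbb{Z},q}^{ev}\cap\mathcal I_{|s|_{2i-1}}$ and $w_{2i}\in\bar U_q^{ev}\cap\mathcal I_{|s|_{2i}}$, as required.

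The step I expect to be the main obstacle is the first one: upgrading the ``up to $\sim$'' bookkeeping of Section~\ref{bottom} to the \emph{exact} factorization $J(P_W,s)=\tilde D^{\mathrm{Lk}(W)}\cdot(x_1\otimes\cdots\otimes x_{2k})$, so that dividing by $\tilde D^{\mathrm{Lk}(W)}$ leaves an honest simple tensor rather than a sum. Once the simple‑tensor structure is pinned down and the maximal white dot is located, the remaining subalgebra and ideal claims are formal.
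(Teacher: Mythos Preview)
Your proposal is correct and follows essentially the same route as the paper's proof. The paper's argument is slightly more streamlined: rather than establishing an \emph{exact} factorization $J(P_W,s)=\tilde D^{\mathrm{Lk}(W)}\cdot(x_1\otimes\cdots\otimes x_{2k})$, it works entirely modulo the equivalence $\sim$, recording only that
\[
\tilde J(P_W,s)\ \sim\ (\tilde D^{\mathrm{Lk}(W)})^{-1}J(P_W,s)\ \sim\ J(P_1,s)^{\circ}\otimes\cdots\otimes J(P_{2k},s)^{\circ},
\]
and then observing that each $J(P_i,s)^{\circ}$ contains a white-dot factor in $\mathcal I_{|s|_i}$, while Proposition~\ref{W} supplies the subalgebra memberships. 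Since the relation $\sim$ is generated by multiplying individual tensorands by $\pm q^j$ or $K^{2j}$, it preserves simple-tensor structure, membership in $U_{\mathbb{Z},q}^{ev}$ and in $\bar U_q^{ev}$, and membership in the two-sided ideal $\mathcal I_{|s|_i}$; so nothing is lost by staying at the level of $\sim$.

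Thus the step you flag as the main obstacle is not an obstacle at all. You may either argue exactly as you do---the sliding identities $D(1\otimes x)=(K^{|x|}\otimes x)D$ and $K^{\pm 1}x=q^{\pm|x|}xK^{\pm 1}$ are genuine equalities, so the factorization you write down really is exact---or simply note, as the paper does, that the $\sim$ bookkeeping already suffices.
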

\begin{proof}
Let $(P_i,s)^{\circ }$ denote the decorated diagram with $P_i$ and white dots of $(P_W,s)^{\circ }$ on $P_i$
(see p\pageref{noten} for the definition of  $(P_W,s)^{\circ }$). 
Recall that   one of the elements $\tilde E^{(s(c))}$, $\tilde F^{(s(c))}$, $e^{s(c)}$,  $f^{s(c)}$ is labeled on a white dot on a crossings $c$ of the decorated diagram $(P_W,s)^{\circ }$.
Since  each of those elements is   contained in $\mathcal{I}_{s(c)}$, we have 
\begin{align*}
&J(P_{i},s)^{\circ } \in \mathcal{I}_{|s|_i}.
\end{align*}
Note that
\begin{align*}
\tilde J(P_W,s)&\sim (\tilde D^{\mathrm{Lk}(W)})^{-1}J(P_W,s)
\\
&\sim J(P_1,s)^{\circ }\otimes \cdots \otimes J(P_{2k},s)^{\circ },
\end{align*}
where $\sim $ means equality up to multiplication by  $\pm q^j,K^{2j} (j\in \mathbb{Z})$ on any tensorands.
This and Proposition \ref{W} complete the proof.
\end{proof}
\begin{proof}[Proof of Theorem \ref{1}.]
 Let $|s|=\max\{s(c) \ | \ c\in C(P_W)\}$  denote the maximal integer of the image of $s$.
Since every crossing of  $P_W$ has  at least one strand  in $P_W^{odd}$,
we can assume $s(c)=|s|$ for a crossing  $c$ that has  a strand  of $P_{2j-1}$,  $1\leq j\leq k$.
Take  elements $w_{2i-1}\in U_{\mathbb{Z},q}^{ev} \cap \mathcal{I}_{|s|_{2i-1}} $ and  $w_{2i}\in \bar U_q^{ev}\cap \mathcal{I}_{|s|_{2i}} $, $i=1,\ldots, k$, as in  Lemma \ref{Q2}.
We have
\begin{align*}
w_{2j-1}\in \mathcal{I}_{|s|}.
\end{align*} 
Since $
\mathcal{I}_{|s|} \triangleright \bar U_q^{ev}\subset  C_{|s|}
$,
we have
\begin{align*}
w_{2j-1}\triangleright w_{2j}\in C_{|s|}.
\end{align*}
In view of   Proposition \ref{Habi}, we have 
\begin{align*}
\ad^{\otimes k}(\tilde J(P_W, s))=\ad^{\otimes k}(w_1\otimes \cdots\otimes w_{2k})\in  C_{|s|}^{(k)}.
\end{align*}
Thus by Proposition \ref{rib},  we have 
\begin{align*}
J_T&=\mu^{[N_1,\ldots, N_n]}\ad^{\otimes k}(J_W)
\\
&=\sum_{l\geq 0}\sum_{s\in \mathcal{S}(P_W), |s|=l}\mu^{[N_1,\ldots, N_n]}\ad^{\otimes k}(\tilde J(P_W, s))\in (\bar U_q^{ev})\;\hat  {}^{\;\hat  \otimes n}.
\end{align*}
This completes the proof.
\end{proof}

\begin{remark}
Recall from \cite{H2} the $\mathbb{Z}[q,q^{-1}]$-subalgebra $(\bar U_q^{ev})\;\tilde {}^{\;\tilde \otimes n}$ of $U_h^{\hat \otimes n}$.
We can prove the inclusion $(\bar U_q^{ev})\;\hat  {}^{\;\hat  \otimes n}\subset (\bar U_q^{ev})\;\tilde {}^{\;\tilde \otimes n}$ as follows.
We have only to prove
$
C_{2p}\subset  \mathcal{F}_p(\mathcal{U}_q^{ev}),$
for $p\geq 0,$ where  $\mathcal{F}_p(\mathcal{U}_q^{ev})$ denote the two-sided ideal of $\mathcal{U}_q^{ev}$ generated by $e^p$.
In view of Proposition \ref{EQQ}, we have only to prove $
A_{p}\subset  \mathcal{F}_p(\mathcal{U}_q^{ev}).$

Set
\begin{align*}
\begin{bmatrix} H+i \\p\end{bmatrix}_q=\{H+i\}_{q,p}/\{p\}_q!,
\end{align*}
for  $i\in \mathbb{Z}$, $p\geq 0$.
One can show that
\begin{align*}
U_{\mathbb{Z},q}^{ev}=\bigoplus _{i,j\geq 0}\tilde F^{(i)}U_{\mathbb{Z},q}^{0ev}\tilde E^{(j)},
\end{align*}
where $U_{\mathbb{Z},q}^{0ev}$ is the $\mathbb{Z}[q,q^{-1}]$-subalgebra of $U_{\mathbb{Z},q}^{ev}$ generated by
the elements
$K^2,K^{-2},$ and $ \begin{bmatrix} H+i \\p\end{bmatrix}_q$ for $i\in \mathbb{Z}$, $p\geq 0$ (This fact is a variant of a well known fact on 
Lusztig's integral form $U_{\mathbb{Z}}$ \cite{L}).
Thus it is enough to prove that 
\begin{align*}
\tilde F^{(i)}g\tilde E^{(j)}\triangleright e^p &\subset \mathcal{F}_p(\mathcal{U}_q^{ev}),
\end{align*}
for $i,j\geq 0$ and $g\in U_{\mathbb{Z},q}^{0ev}$.
For a homogeneous element $x\in U_h$, we have $U_{\mathbb{Z},q}^{0ev}\triangleright x\subset \mathbb{Z}[q,q^{-1}]x$ since
\begin{align*}
K\triangleright x=q^{|x|}x,
 \quad \begin{bmatrix}H+k
\\ l
\end{bmatrix}_q\triangleright x= \begin{bmatrix}2|x|+k
\\ l
\end{bmatrix}_qx,
\end{align*}
for $k\in \mathbb{Z},$ $l\geq 0.$
Then the claim follows from 
\begin{align*}
&\tilde E^{(j)}\triangleright e^p=(-1)^j\begin{bmatrix}j+p-1 \\ j
\end{bmatrix}_qe^{p+j},
\\
&\tilde F^{(i)}\triangleright e^{p+j}=\sum_{j=0}^n(-1)^jq^{-\frac{1}{2}j(j-1)+j(p+j)}\tilde F^{(n-j)}e^{p+j}\tilde F^{(j)}\subset \mathcal{F}_p(\mathcal{U}_q^{ev}).
\end{align*}
\end{remark}
\section{Examples.}\label{exam}
The Borromean tangle $B\in BT_3$ is the bottom tangle depicted in Figure \ref{fig:borromean}.
Note that $B$ is a $3$-component,  algebraically-split, $0$-framed  bottom tangle, and the  closure   of $B$ is  the Borromean rings $L_B$.
It is well known that  $L_B$ is not a ribbon link.
\begin{figure}
\centering
\includegraphics[width=3cm,clip]{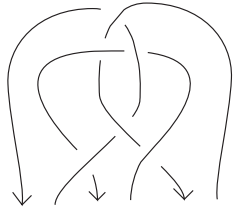}
\caption{The Borromean tangle $B\in BT_3$.}\label{fig:borromean}
\end{figure}%
 In \cite{H2}, the formulas of the universal $sl_2$ invariant of $B$ is observed;
\begin{align}
\begin{split}
&J_B=\sum_{m_1,m_2,m_3,n_1,n_2,n_3\geq 0} q^{m_3+n_3}(-1)^{n_1+n_2+n_3}q^{\sum _{i=1}^3\big(-\frac{1}{2}m_i(m_i+1)
-n_i +m_im_{i+1}-2m_in_{i-1}\big)}
\\
&\tilde F^{(n_3)}e^{m_1}\tilde F^{(m_3)}e^{n_1}K^{-2m_2}\otimes  \tilde F^{(n_1)}e^{m_2}\tilde F^{(m_1)}e^{n_2}K^{-2m_3}\otimes \tilde  F^{(n_2)}e^{m_3}\tilde F^{(m_2)}e^{n_3}K^{-2m_1}
\\
&\notin (\bar U_q^{ev})\;\hat {}^{\;\hat \otimes 3},\label{ub}
\end{split}
\end{align}
where the index $i$ should be considered modulo $3$.
The following is also observed in \cite{H2};
\begin{align}
&J_{L_B; \tilde P_i',\tilde P_j',\tilde P_k'}=\begin{cases}
(-1)^iq^{-i(3i-1)}\{ 2i+1\}_{q,i+1}/ \{ 1 \}_q \quad &\text{if} \quad i=j=k,
\\
0  &\text{otherwise.}
\end{cases}\label{qua}
\end{align}
Since $
\frac{\{ 2i+1\}_{q,i+1}}{ \{ 1 \}_q}  \notin \frac{\{ 2i+1\}_{q,i+1}}{ \{ 1 \}_q} I_iI_i$
for $i\geq 1$,  each of (\ref{ub}) and  (\ref{qua})  implies that  the Borromean rings $L_B$ is not a ribbon link.
\begin{remark}
Let $L_K$ be the $2$-component link obtained from a knot $K$ by duplicating the component.
Indeed, $L_K$ is a boundary link.
In particular,  if $K$ is a ribbon knot, then $L_K$ is a ribbon link.
We can prove
\begin{align*}
J_{L_K;\tilde P_m', \tilde P_n'}\in  \frac{\{ 2m+1\}_{q, m+1}}{\{1\} _q} I_{n}
\end{align*}
as follows.  By the formulas in Section 8 in \cite{H2}, we have 
\begin{align*}
\tilde P_m'\tilde P_n'=&\sum_{k=0}^{\min(m,n)}q^{-kl}\frac{ \{m+n \}_q! }{ \{k \}_q!\{m-k \}_q!\{n-k \}_q!}\tilde P_l'
\\
=&\sum_{k=0}^{\min(m,n)}q^{-l(k+l+1)} C_{k,m,n}(q)P_l'',
\end{align*}
where  $l=m+n-k$, $P_l''=\frac{\{1\}_q}{\{2l+1\}_{q,l+1}}q^{l(l+1)}\tilde P_l'$, and 
\begin{align*}
C_{k,m,n}(q)&=\frac{\{2m+1 \}_{q,m+1}}{\{1 \}_q} \{k\}_q!\{ n-k\} _q!
\begin{bmatrix} 2l+1 \\ 2m+1 \end{bmatrix}_q
\begin{bmatrix} 2(n-k) \\ n-k \end{bmatrix}_q
\begin{bmatrix} m+n \\ k \end{bmatrix}_q
\begin{bmatrix} m \\ k \end{bmatrix}_q 
\\
&\in  \frac{\{ 2m+1\}_{q, m+1}}{\{1\} _q} I_{n}.
\end{align*}
Theorem 6.4 in \cite{H2} implies that $J_{K;P_l''}\in \mathbb{Z}[q,q^{-1}]$ for $l\geq 0$, hence we have
\begin{align*}
J_{L_K;\tilde P_m', \tilde P_n'}=&J_{K;\tilde P_m'\cdot  \tilde P_n'}
\\
=&\sum_{k=0}^{\min(m,n)}q^{-l(k+l+1)}C_{k,m,n}(q)J_{K;P_l''}
\\
&
\in  \frac{\{ 2m+1\}_{q, m+1}}{\{1\} _q} I_{n}.
\end{align*}
\end{remark}

\begin{acknowledgments}
The author is deeply grateful to Professor Kazuo Habiro and Professor Tomotada Ohtsuki
for helpful advice and encouragement.
\end{acknowledgments}

\end{document}